\newtheorem{theorem}{Theorem}[section]
\newtheorem{corollary}[theorem]{Corollary}
\newtheorem{proposition}[theorem]{Proposition}
\newtheorem{lemma}[theorem]{Lemma}
\newtheorem{notation}[theorem]{Notation}
\newtheorem*{thmIntroA}{Theorem A}
\newtheorem*{thmIntroB}{Theorem B}
\theoremstyle{definition}
\newtheorem{definition}[theorem]{Definition}
\newtheorem{example}[theorem]{Example}
\newtheorem{remark}[theorem]{Remark}
\newcommand{\minReg}[1]{m_{#1}}
\newcommand{\minRho}[2]{m^{#2}_{#1}}
\newcommand{\reg}{\textnormal{reg}}
\newcommand{\sat}{\textnormal{sat}}
\newcommand{\gin}{\textnormal{gin}}
\newcommand{\ch}{\textnormal{ch}}
\newcommand{\PP}{\mathbb{P}}
\newcommand{\expLift}{expanded lifting}
\newcommand{\innesto}{ideal graft}
\renewcommand{\tilde}{\widetilde}
\renewcommand{\bar}{\overline}
\newcommand{\Proj}{\textnormal{Proj}\,}
\begin{document}

\title[Minimal Castelnuovo-Mumford regularity for a given Hilbert polynomial]{Minimal Castelnuovo-Mumford regularity for a given Hilbert polynomial}\thanks{The first and fourth authors were supported by the PRIN 2010-11  \emph{Geometria delle variet\`a algebriche}, cofinanced by MIUR (Italy)}

\author[F. Cioffi]{F. Cioffi}
\address{Dipartimento di Matematica e Applicazioni dell'Universit\`{a} di Napoli Federico II,\\   via Cintia, 80126  Napoli, Italy
         }
\email{\href{mailto:francesca.cioffi@unina.it}{francesca.cioffi@unina.it}}

\author[P.Lella]{P. Lella}
\address{Dipartimento di Matematica dell'Universit\`{a} di Torino\\ 
         Via Carlo Alberto 10, 
         10123 Torino, Italy}
\email{\href{mailto:paolo.lella@unito.it}{paolo.lella@unito.it}}

\author[M. G. Marinari]{M.G. Marinari}
\address{Dipartimento di Matematica dell'Universit\`{a} di Genova,\\   via Dodecaneso 35, 16146 Genova, Italy
         }
\email{\href{mailto:marinari@dima.unige.it}{marinari@dima.unige.it}}

\author[M. Roggero]{M. Roggero}
\address{Dipartimento di Matematica dell'Universit\`{a} di Torino\\ 
         Via Carlo Alberto 10, 
         10123 Torino, Italy}
\email{\href{mailto:margherita.roggero@unito.it}{margherita.roggero@unito.it}}

\begin{abstract}
Let $K$ be an algebraically closed field of null characteristic and $p(z)$ a Hilbert polynomial. We look for the minimal Castelnuovo-Mumford regularity $\minReg{p(z)}$ of closed subschemes of projective spaces over $K$ with Hilbert polynomial $p(z)$. Experimental evidences led us to consider the idea that $m_{p(z)}$ could be achieved by schemes having a suitable {\em minimal Hilbert function}. We give a constructive proof of this fact. Moreover, we are able to compute the minimal Castelnuovo-Mumford regularity $\minRho{p(z)}{\varrho}$ of schemes with Hilbert polynomial $p(z)$ and given regularity $\varrho$ of the Hilbert function, and also the minimal Castelnuovo-Mumford regularity $m_u$ of schemes with Hilbert function $u$.

These results find applications in the study of Hilbert schemes. They are obtained by means of \emph{minimal Hilbert functions} and of two new constructive methods which are based on the notion of growth-height-lexicographic Borel set and called \emph{ideal graft} and \emph{extended lifting}. 
\end{abstract}

\keywords{Castelnuovo-Mumford regularity, Hilbert polynomial, regularity of a Hilbert function, minimal function, Borel ideal, closed projective subschemes}
\subjclass[2000]{14Q99, 68W30, 11Y55}
\maketitle


\section*{Introduction}

This paper deals with the Castelnuovo-Mumford regularity $\reg(X)$ of closed subschemes $X$ of projective spaces over an algebraically closed field $K$ of characteristic zero, with a given Hilbert polynomial $p(z)$. Hence, a scheme will be understood to be a closed subscheme of a projective space. We do not establish the dimension of the projective space in which the schemes are embedded, although we will be able to give information also for every given dimension.

As we can read in literature, $\reg(X)$ is \lq\lq one of the fundamental invariants in Commutative Algebra and Algebraic Geometry\rq\rq (see \cite{Br}) and can be also considered as \lq\lq a measure of the complexity of computing Gr\"obner bases\rq\rq \cite{BayerMumford}. Both these aspects of the Castelnuovo-Mumford regularity are present in this paper. Thus,  besides the classical definitions of the Castelnuovo-Mumford regularity in terms of ideal sheaf cohomology and of syzygies, we will also recall its characterization in terms of the generic initial ideal with respect to the degree reverse lexicographic term order (see Definition \ref{def:regularity} and Proposition \ref{borel}). 

In his famous paper \cite{Go}, G. Gotzmann finds a sharp upper bound for the Castelnuovo-Mumford regularity of schemes with a given Hilbert polynomial. This upper bound, called {\em Gotzmann number}, is fundamental in the study of Hilbert schemes and determines a finite range of possible values for the Castelnuovo-Mumford regularity, given a Hilbert polynomial. Here, we tackle the following question posed by E.~Ballico in a private conversation: {\em what is the minimal value $m_{p(z)}$ of the Castelnuovo-Mumford regularity of closed subschemes of projective spaces with given Hilbert polynomial $p(z)$?}

Here, we provide a complete answer to the posed question, because we give a sharp lower bound for the Castelnuovo-Mumford regularity of schemes with a given Hilbert polynomial, thus narrowing the range in which the Castelnuovo-Mumford regularity of a scheme with a given Hilbert polynomial can vary. This additional information can have many significant applications. To offer an example, it can be applied to the study described in \cite{BBR}, from which the question considered in our manuscript arised. In that paper explicit equations and inequalities defining the open locus of the Hilbert scheme of points with bounded regularity are obtained: our result tells when such an open subset is non-empty.

To reach our aim, we study the {\em minimal functions} and develop two new constructive methods, which we think can have further applications also in more general contexts as, for example, the study of liftings and that of general hypersurface sections.

In some cases, to find the value of $m_{p(z)}$ is almost immediate. For example, for a constant Hilbert polynomial $p(z)=d>1$, we obtain $m_{p(z)}=2$. Indeed, having $K$ an infinite cardinality, there exists a scheme $X\subset \PP_K^{d-1}$ of $d$ points in generic position, that has Hilbert function $(1,d,d,\ldots)$ (for instance, see \cite[Theorem 2.5]{GMR}).

However, in general, the posed question has not such a straightforward answer.
Some preliminary results highlight the role played in this context by the regularity $\varrho_X$ of the Hilbert function $H_X$ of the scheme $X$. Indeed, they suggest that, if $\reg(X) = m_{p(z)}$, then the integers $\varrho_X$ and $H_X(\varrho_X-1)$ are small with respect to the corresponding ones of other schemes with greater Castelnuovo-Mumford regularities (Lemma \ref{CMR} and Proposition \ref{newreg}). These facts together with experimental evidences encourage us to look for an answer by focusing on the following two key ideas: to find a suitable notion of minimal function and to consider the minimal Castelnuovo-Mumford regularity $m^{\varrho}_{p(z)}$ of schemes, by choosing also the regularity $\varrho$ of the Hilbert function. We gained the experimental evidences supporting our ideas by a pencil-and-paper work and by  using the applet that is available at \href{http://www.personalweb.unito.it/paolo.lella/HSC/Borel-fixed_ideals.html}{\texttt{http://www.personalweb.unito.it/paolo.lella/HSC/Borel-fixed\_ideals.html}} \href{http://www.personalweb.unito.it/paolo.lella/HSC/Borel-fixed_ideals.html} \ to compute suitable Borel-fixed ideals. Some of our experiments are reproduced here as examples. 

We develop the study of minimal functions with given regularity generalizing a construction that L.~G.~Roberts introduced in a special case (see \cite{R}). The minimal functions have very nice properties due to their purely combinatorial structure (see Theorem \ref{th:first derivative}) and turn out to be sufficient for a complete description of minimal Castelnuovo-Mumford regularities, in the following sense. 

\begin{thmIntroA}
Let $S(p(z),\varrho)$ be the set of schemes with given Hilbert polynomial $p(z)$ and regularity $\varrho$ of their Hilbert functions. If $S(p(z),\varrho)$ is non-empty, then it contains a scheme $X$ such that both its Hilbert function $H_X$ and its Castelnuovo-Mumford regularity $\reg(X)$ are minimal for schemes in $S(p(z),\varrho)$.
\end{thmIntroA}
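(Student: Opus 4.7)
The plan is to exhibit a single scheme $X \in S(p(z),\varrho)$ that simultaneously realizes the pointwise minimum of $H_Y$ and the minimum of $\reg(Y)$ among $Y \in S(p(z),\varrho)$. My candidate will be a scheme whose saturated defining ideal is a Borel-fixed monomial ideal built combinatorially from the data $(p(z),\varrho)$ via the two constructive devices announced in the abstract.

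First I would identify the minimal Hilbert function. Using Macaulay's characterization of admissible Hilbert functions together with the constraints that $u(i) = p(i)$ for $i \geq \varrho$ and $u(\varrho - 1) \neq p(\varrho - 1)$, one singles out a pointwise-least admissible function $u^\ast$. The combinatorial machinery developed in the paper (Theorem \ref{th:first derivative}) identifies $u^\ast$ uniquely as the minimal function with polynomial $p(z)$ and regularity $\varrho$, and shows that every $H_Y$ for $Y \in S(p(z),\varrho)$ dominates $u^\ast$ pointwise.

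Next I would realize $u^\ast$ by an explicit scheme with small regularity. This is where the \emph{ideal graft} and the \emph{extended lifting} enter: the ideal graft builds the saturated Borel component in degrees up to $\varrho - 1$ matching the prescribed values of $u^\ast$, while the extended lifting extends this graft to a saturated Borel-fixed ideal $\mathfrak{b}$ in a polynomial ring with sufficiently many variables, whose quotient has exactly Hilbert function $u^\ast$ and Hilbert polynomial $p(z)$. The growth-height-lexicographic choice at each step is essential because it forces the maximal degree of a minimal generator of $\mathfrak{b}$ to be as small as possible compatible with $u^\ast$. I then let $X$ be the scheme defined by $\mathfrak{b}$.

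Finally I would prove the minimality of $\reg(X)$ in $S(p(z),\varrho)$. For any $Y \in S(p(z),\varrho)$ one has $\reg(Y) = \reg(\gin(I_Y))$ for the degree reverse lexicographic order, and $J := \gin(I_Y)$ is Borel-fixed, so $\reg(Y)$ equals the maximum degree of a minimal generator of $J$. Since $H_Y \geq u^\ast$ pointwise, the combinatorial minimality of the growth-height-lexicographic construction forces $J$ to contain a generator in degree at least $\reg(\mathfrak{b})$, and hence $\reg(Y) \geq \reg(\mathfrak{b}) = \reg(X)$. The main obstacle, I expect, lies in this last step: producing a Hilbert-minimal scheme is the easy half, but showing that the \emph{same} scheme also minimizes the Castelnuovo-Mumford regularity is subtler. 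The delicate point is twofold, namely that among all Borel-fixed ideals with Hilbert function $u^\ast$ the one built by ideal graft plus extended lifting has the smallest top-degree generator, and that no Borel-fixed ideal with strictly larger Hilbert function can do better. This is precisely what the growth-height-lexicographic ordering is engineered to guarantee, so the argument ultimately reduces to a careful combinatorial comparison of Borel sets, exploiting the monotonicity properties of the minimal function established in Theorem \ref{th:first derivative}.
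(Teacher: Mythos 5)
Your first two steps match the paper's route: the minimal element of $F(p(z),\varrho)$ under $\preceq$ is identified combinatorially as $f^{\varrho}_{p(z)}$ (or $g^{\varrho}_{p(z)}$ when $f^{\varrho}_{p(z)}$ has regularity $<\varrho$), via Theorem \ref{th:first derivative}, and it is realized by a Borel-fixed ideal. The genuine gap is in your final step. You assert that, because $H_Y\succeq u^\ast$, the Borel ideal $\gin(I_Y)$ is forced to have a minimal generator in degree at least $\reg(\mathfrak b)$, and you attribute this to the growth-height-lexicographic construction. No argument is given for this, and the property you invoke points the wrong way: Theorem \ref{th:crucial} shows $\reg\bigl((B)^{\sat}\bigr)\leq\reg\bigl((L_{gh}(B))^{\sat}\bigr)$, i.e.\ passing to the growth-height-lexicographic normal form can only \emph{increase} the regularity. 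It is a device for producing upper bounds and for making Borel sets with prescribed height vectors comparable; it does not minimize the top-degree generator among Borel ideals with a given Hilbert function, and it yields no lower bound on the generator degrees of an arbitrary $\gin(I_Y)$.

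The paper closes this gap by arguing in the opposite direction. Rather than bounding $\reg(Y)$ from below, it starts from an arbitrary $Y\in S(p(z),\varrho)$ with $H_Y=q$ and $\reg(Y)=m_q\geq\varrho+1$, and applies the ideal graft (Theorem \ref{innesto}) with $w:=u^\ast$, this $q$, and grafting degree $m:=\varrho+1$. The numerical hypotheses of the graft are exactly $w(m-1)=q(m-1)$ and $w(m-2)\leq q(m-2)$, which hold here because $u^\ast(\varrho)=q(\varrho)=p(\varrho)$ and $u^\ast(\varrho-1)\leq q(\varrho-1)$ by the $\preceq$-minimality of $u^\ast$. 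The graft then produces a scheme with Hilbert function $u^\ast$ and regularity at most $\max\{\varrho+1,m_q\}=m_q$, whence $m_{u^\ast}\leq m_q\leq\reg(Y)$ for every $Y$; this is Corollary \ref{cor:uso innesto}\emph{(i)},\emph{(iv)}. Note that the only feature of $u^\ast$ used at this point is the single inequality $u^\ast(\varrho-1)\leq q(\varrho-1)$, which is much weaker than the pointwise domination you were trying to exploit, and no claim about the generators of $\gin(I_Y)$ is needed. To repair your proof you would need either to supply the missing combinatorial lower bound on the generators of an arbitrary Borel ideal dominating $u^\ast$ (which the paper never proves and which does not follow from the growth-height-lexicographic machinery), or to replace your last step with a transfer argument of the graft type.
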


In \cite[Theorem 3.2]{CS}, the authors observe that Theorem A holds on fields of any characteristic.

In order to find $m_{p(z)}$, we compare the integers $m^{\varrho}_{p(z)}$, as $\varrho$ varies among all values for the regularities of Hilbert functions of schemes with the given Hilbert polynomial $p(z)$. Our analysis allows us to prove the following statement, which is even stronger than the one suggested by the preliminary results.

\begin{thmIntroB}
Let $\varrho$ and $\varrho'$ be the regularities of two Hilbert functions of schemes with Hilbert polynomial $p(z)$. Then,
\begin{itemize}
\item $\varrho < \varrho' \quad \Longrightarrow \quad \minRho{p(z)}{\varrho} \leqslant \minRho{p(z)}{\varrho'}$;
\item if $\varrho <m_{p(z)}-1$ then $\minRho{p(z)}{\varrho}=m_{p(z)}$; especially, $m_{p(z)}=\minRho{p(z)}{\varrho}$ for the minimal possible $\varrho$;
\item if $\varrho\geq m_{p(z)}-1$ then $\minRho{p(z)}{\varrho}=\varrho+1$ or $\minRho{p(z)}{\varrho}=\varrho+2$.
\end{itemize}
\end{thmIntroB}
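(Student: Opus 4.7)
My plan is to invoke Theorem A throughout, so as to reduce each of the three assertions to a statement about \emph{minimal} Hilbert functions with prescribed regularity $\varrho$ and their realisation by Borel-fixed saturated ideals. The combinatorial description in Theorem \ref{th:first derivative} will supply the local behaviour of the minimal function, while the ideal graft and expanded lifting will furnish the constructions needed to pass between schemes of different types.

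For the first bullet, given an optimal scheme $Y\in S(p(z),\varrho')$ with $\reg(Y)=\minRho{p(z)}{\varrho'}$, I would produce a scheme $X\in S(p(z),\varrho)$ with $\reg(X)\leq\reg(Y)$. On the generic initial side, this amounts to adjusting a Borel-fixed ideal so that the regularity of its Hilbert function drops from $\varrho'$ to $\varrho$, without increasing the maximum generator degree. The ideal graft is tailored to exactly this: it inserts additional generators in suitable low degrees which tune the Hilbert function, while leaving the high-degree part of the ideal, responsible for the Castelnuovo-Mumford regularity, untouched.

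The second bullet then follows. The inequality $\minRho{p(z)}{\varrho}\geq m_{p(z)}$ is definitional, and the \emph{especially} clause is immediate from part~(1) since monotonicity forces the equality $\minRho{p(z)}{\varrho^{\ast}}=m_{p(z)}$ at the minimal admissible $\varrho^{\ast}$. For the reverse inequality throughout the range $\varrho<m_{p(z)}-1$, by monotonicity it suffices to treat the largest admissible $\varrho$ in that range and to exhibit there a scheme with Castelnuovo-Mumford regularity equal to $m_{p(z)}$. This is achieved by applying the ideal graft to the generic initial ideal of an optimal scheme for $m_{p(z)}$, calibrating the graft so that the Hilbert function regularity lands at the prescribed value while the top generator degree is preserved.

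The third bullet is the most delicate. The lower bound $\minRho{p(z)}{\varrho}\geq\varrho+1$ is a consequence of the standard relation between the regularity of the Hilbert function and the Castelnuovo-Mumford regularity of a saturated ideal. For the upper bound and the dichotomy $\varrho+1$ versus $\varrho+2$, I would apply the expanded lifting to a Borel-fixed ideal in a smaller polynomial ring realising the minimal function of regularity $\varrho$: the lifted ideal is saturated and Borel-fixed, and its top generator degree is controlled by the first differences of the minimal Hilbert function through Theorem \ref{th:first derivative}. The main obstacle will be the fine combinatorial analysis needed to rule out generator degrees beyond $\varrho+2$ and to identify, in terms of the shape of the minimal Hilbert function near its stabilisation, which of the two possibilities $\varrho+1$ or $\varrho+2$ actually occurs.
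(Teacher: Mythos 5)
Your toolbox is the right one, but the key tool is assigned to the wrong job, and this leaves a genuine gap in the first (and hence the second) bullet. The ideal graft cannot by itself yield the monotonicity $\minRho{p(z)}{\varrho}\leq\minRho{p(z)}{\varrho'}$. Theorem \ref{innesto} only bounds the regularity of the grafted scheme by $\max\{m,m_q\}$, and the hypotheses $w(m-1)=q(m-1)$, $w(m-2)\leq q(m-2)$ force $m\geq\varrho'+2$ in general (for $t<\varrho'$ one has no control on the sign of $w(t)-q(t)$: e.g.\ for points $q(t)<p(t)=w(t)$ below the regularity). So the graft only gives $\minRho{p(z)}{\varrho}\leq\max\{\varrho+2,\,\minRho{p(z)}{\varrho'}+1\}$ (this is exactly the first assertion of Corollary \ref{cor:uso innesto}), which is off by one precisely in the critical case $\minRho{p(z)}{\varrho'}=\varrho'+1$. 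Your heuristic that the graft ``leaves the high-degree part of the ideal untouched'' is not what the construction does: the new ideal is generated by $G'\cup G$ with $G$ living in degrees $\geq m\geq\varrho'+2$, so the top generator degree is \emph{not} preserved below $\varrho'+2$. The missing idea is the one the paper uses in Theorem \ref{primo teorema}: take an optimal scheme with the minimal Hilbert function of the \emph{larger} regularity, pass to its general hyperplane section $Z$ (so $H_Z\preceq\Delta f^{\varrho}_{p(z)}$ and $\reg(Z)\leq\reg(X)$ by Lemma \ref{CMR}), compare $\Delta f^{\varrho}_{p(z)}$ with $\Delta f^{\varrho-1}_{p(z)}$ via Lemma \ref{lemma:first derivative}\emph{(ii)}, and then rebuild a scheme with the smaller-regularity minimal function by the \emph{expanded lifting} (Theorem \ref{construction}), whose regularity is exactly $\max\{\reg(Z),\varrho\}$; the graft enters only to dispose of the exceptional case $H_Z\not\preceq\Delta f^{\varrho-1}_{p(z)}$ by contradiction. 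Without this step your second bullet also collapses, since you derive it from monotonicity.

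For the third bullet, the lower bound and the general strategy are fine, but you leave the actual dichotomy as an ``obstacle,'' whereas it is the substance of the claim. The paper settles it without any fine analysis of generator degrees: the upper bound $\varrho+2$ comes from grafting the minimal function $u\in F(p(z),\varrho)$ onto $\bar f=f^{\bar\varrho_{p(z)}}_{p(z)}$ with $m=\max\{\varrho+2,\bar m\}$ (Corollary \ref{cor:finale innesto}), and the choice between $\varrho+1$ and $\varrho+2$ is decided by whether the minimal element of $F(p(z),\varrho)$ is $f^{\varrho}_{p(z)}$ or $g^{\varrho}_{p(z)}$: in the latter case $g^{\varrho}_{p(z)}(\varrho-1)>p(\varrho-1)$ forces regularity $\geq\varrho+2$ by Proposition \ref{newreg}, and the reduction to the minimal function is Corollary \ref{cor:uso innesto}\emph{(\ref{it:uso innesto_iv})}. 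You should make this reduction and the use of Proposition \ref{newreg} explicit rather than deferring them.
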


The above results are collected in Theorem \ref{primo teorema} and Corollary \ref{cor:finale innesto}. Their  proofs are based on two new constructive methods we conceived by exploiting the features of the growth-height-lexicographic Borel sets, which have been introduced by D.~Mall in \cite{Mall1,Mall2}. 
Both these methods combine together properties of two schemes $X_1$ and $X_2$, returning a new scheme with Hilbert function and Castelnuovo-Mumford regularity depending on those of $X_1$ and $X_2$. They differ by the numerical hypotheses they require to be applied. 

The first method, called \emph{ideal graft} constructs a scheme with the Hilbert function of $X_1$ and the Castelnuovo-Mumford regularity depending on that of $X_2$. This method has a special role in the comparison of the integers $m^{\varrho}_{p(z)}$ (Theorem \ref{innesto} and Corollary \ref{cor:uso innesto}). 

The\hfill second\hfill method,\hfill called\hfill \emph{\expLift},\hfill computes\hfill a\hfill scheme\hfill having\\ Hilbert function of $X_1$ and hyperplane section with the Hilbert function of $X_2$. Moreover, the expanded lifting has a total control on the Castelnuovo-Mumford regularity of the constructed scheme (see Theorem \ref{construction}).

Our final result gives a recursive procedure to compute the minimal Castelnuovo-Mumford regularity $m_u$ of a scheme with a Hilbert function $u$ and provides very strict lower and upper bounds for $m_u$ (see Theorem \ref{construction-main}). This computation allows to determine $m^{\varrho}_{p(z)}$, $m_{p(z)}$ and also the minimal  Castelnuovo-Mumford regularity of schemes with Hilbert polynomial $p(z)$ that are embedded in a given projective space $\mathbb P^n_K$.

The paper is organized in the following way. In section 1, we set some notation about monomial ideals. In section 2, we recall some basic definitions and classical results about Hilbert functions and Castelnuovo-Mumford regularity. We also introduce some new preliminary results (Propositions \ref{newreg} and \ref{prop:varrho}).
In section 3, we introduce the minimal functions with given regularities, studying their combinatorial properties in relation with the existence of schemes with given Hilbert polynomials and Hilbert function regularity (see Theorem \ref{th:first derivative}). Moreover, we describe a closed formula for the minimum $\varrho$ such that $S(p(z),\varrho)\not=\emptyset$ (see Proposition \ref{prop:calcolo bar varrho}).

In section 4, we recall the notion of growth-height-lexicographic Borel set, which is crucial for the description of the two constructive methods ideal graft and expanded lifting, which we propose and describe in sections 5 and 6, respectively. In section 7, we provide the proofs of our main results.

As a natural consequence of the computational point of view that supported this paper, all our results produce some constructive methods. In the appendix A, we describe the main algorithmic procedures to compute the minimal Castelnuovo-Mumford regularities that arise from our exposition. 


\section{General setting}

Let $K$ be an algebraically closed field of characteristic $0$, $S:=K[x_0\ldots,x_n]=\bigoplus_{t\geq 0} S_t$ be the ring of polynomials over $K$ in $n+1$ variables with $x_0<x_1<\ldots <x_n$, where $S_t$ is the $K$-vector space of the homogeneous polynomials of degree $t$, and $\mathbb P^n_K = \Proj S$ be the $n$-dimensional projective space over $K$. 

For a subset $M\subseteq S$ we set $M_t:=M\cap S_t$. For a homogeneous ideal $I$ of $S$, we denote by  $I_{\leq t}$ the ideal generated by the polynomials 
of $I$ of degree $\leq t$.

A {\em term} of $S$ is a power product $x^{\alpha}:=x_0^{\alpha_0} x_1^{\alpha_1} \ldots x_n^{\alpha_n}$, where ${\alpha_0}, {\alpha_1},\ldots,\alpha_n$ 
are non negative integers, and  $\mathbb T:=\{x_0^{\alpha_0} x_1^{\alpha_1}\ldots x_n^{\alpha_n} \ \vert \ (\alpha_0, \alpha_1\ldots,\alpha_n) \in 
\mathbb N^{n+1}\}$ is the multiplicative monoid of all terms of $S$. For a monomial ideal $J$, we denote by $\mathcal N(J)$ the {\em sous-escalier} 
of $J$, i.e.~the set of all terms outside $J$. 

In our setting, the graded term orders deglex and degrevlex are defined saying that, given two terms $x^{\alpha}$ and 
$x^{\beta}$ of $\mathbb T$ of the same degree $t$, $x^{\alpha}$ is less than $x^{\beta}$ with respect to: 
\begin{itemize}
\item[] {\em deglex} order, if $\alpha_k<\beta_k$, where $k=\max\{i\in\{0,\ldots,n\} : \alpha_i\not= \beta_i \}$; 
\item[] {\em degrevlex} order, if $\alpha_h>\beta_h$, where $h=\min\{i\in\{0,\ldots,n\} : \alpha_i\not= \beta_i \}$.
\end{itemize}

Given a degree $t$, a set $B\subset \mathbb T_t$ is a {\em lex-segment} if it consists of the $\vert B\vert$ highest terms of $\mathbb T_t$ with respect to the deglex order. Given a subset $A\subset\mathbb T_t$, a {\em lex-segment in $A$} is the intersection of a lex-segment of $\mathbb T_t$ and $A$.  A monomial ideal $J$ is a {\em lex-segment ideal} if $J_t$ is a lex-segment for every integer $t$.


\section{Hilbert function and Castelnuovo-Mumford regularity}

For a homogeneous ideal $I\subset S$, the Hilbert function of the graded algebra $S/I$ is denoted by $H_{S/I}$ and its Hilbert polynomial is denoted by $p_{S/I}(z)\in \mathbb{Q}[z]$. The \emph{regularity of the Hilbert function of $S/I$} is the integer $\varrho_{S/I}:=\min\{t\in \mathbb N \ \vert \ H_{S/I}(t')=p_{S/I}(t'), \forall \ t'\geq t\}$.

We\hfill set\hfill $\Delta^0 H_{S/I}(t):=H_{S/I}(t)$\hfill and,\hfill when\hfill $S/I$\hfill is\hfill not\hfill Artinian,\hfill we\hfill let\\ $\Delta^i H_{S/I}(0):=1$ and $\Delta^i H_{S/I}(t):= \Delta^{i-1}H_{S/I}(t)-\Delta^{i-1}H_{S/I}(t-1)$,  for each $1\leq i\leq \dim_{\text{Krull}}(S/I)$ and for $t>0$, calling it {\em i-th derivative} of $H$; we use an analogous notation for Hilbert polynomials. 

We define the function $\Sigma H_{S/I}: \mathbb{N} \longrightarrow\mathbb{N}$ letting $\Sigma H_{S/I}(0):=1$ and $\Sigma H_{S/I}(t):= \Sigma H_{S/I}(t-1)+H_{S/I}(t)$ for each $t\geq 1$, and call it {\em integral} of $H_{S/I}(t)$; we use an analogous notation for Hilbert polynomials. 

Recall that, given two positive integers $a$ and $t$, the {\em binomial expansion of $a$ in base $t$} is the unique writing
\begin{equation}
\label{espansione}
\begin{array}{lllll}
a & = & \binom{k(t)}{t} + \binom{k(t-1)}{t-1} + \ldots + \binom{k(j)}{j} &=: &a_t
\end{array}
\end{equation} 
where $k(t)> k(t-1)>\ldots > k(j)\geq j\geq 1$. We use the convention that a binomial coefficient $\binom{n}{m}$ is null whenever either $n<m$ or $m<0$ 
and $\binom{n}{0}=1$, for all $n\geq 0$. Referring to \cite{Ro}, we let 
$$\begin{array}{ll}
(a_t)^+_+&:=\binom{k(t)+1}{t+1} + \binom{k(t-1)+1}{t} + \ldots + \binom{k(j)+1}{j+1},  \text{ and }\\
(a_t)^-_-&:=\binom{k(t)-1}{t-1} + \binom{k(t-1)-1}{t-2} + \ldots + \binom{k(j)-1}{j-1}.
\end{array}$$
For convenience, we also set $((a_t)^-_-)^+_+:= (((a_t)^-_-)_{t-1})^+_+$. By an easy computation one gets (for example, see \cite[Proposition 4.9]{Ro})
\begin{equation}\label{doppia azione}
((a_t)^-_-)^+_+=\left\{\begin{array}{ll}
a, &\text{if } j>1 \\
a + k(2)-k(1), &\text{if } j=1
\end{array}\right.,\end{equation}
hence, $((a_t)^-_-)^+_+ \geq a$. Moreover, \cite[Prop. 4.3(a) and 4.6(b)]{Ro} 
\begin{equation}\label{disuguaglianze robbiano}
((a+1)_t)^+_+ = (a_t)^+_+ +1 + k(1)  \hbox{ and } ((a+1)_t)^-_- = \left\{ \begin{array}{cc} 
(a_t)^-_- +1 &\hbox{if } j>1\\
(a_t)^-_-   &\hbox{if } j=1. \end{array}\right.
\end{equation}

A numerical function $H:\mathbb N \rightarrow \mathbb N$ is {\it admissible} (or an {\em O-sequence}) if $H(0)=1$ and $H(t+1)\leq (H(t)_t)^+_+$. In particular, 
if $H(t)=0$, then $H(t+h)=0$ for every $h>0$. We will say that $H$ is {\em admissible in $\bar t$} if $H(\bar t+1)\leq (H(\bar t)_{\bar t})^+_+$.

Due to \cite{Ma}, a numerical function $H$ is admissible if, and only if, it is the Hilbert function of $S/I$, for a suitable homogeneous ideal $I$. 

\begin{definition}\label{def:regularity}
A homogeneous ideal $I$ is $m$-regular if the $i$-th syzygy module of $I$ is generated in degree $\leq m+i$. The {\em regularity} $\reg(I)$ of $I$ is the smallest integer $m$ for which $I$ is $m$-regular. 
The saturation of $I$ is $I^{\sat}:=\{f \in S \ \vert \ \forall \ i\in {0,\ldots,n}, \exists\ k_i : x_i^{k_i}f \in I\}$ and $I$ is saturated if $I=I^{\sat}$. 

With the common notation of the ideal sheaf cohomology, given a scheme $X\subset \mathbb{P}^n_K$ and its (saturated) defining ideal $I=I(X)$, the {\em Castelnuovo-Mumford regularity} of $X$ is $\reg(X):=\min\{t\in \mathbb N\ \vert \ H^i(\mathcal I_X(t'-i))=0, \forall \ t'\geq t, \forall i > 0 \}$ and it is equal to $\reg(I)$. Also, we set $H_X(t):=H_{S/I}(t)$, $p_X(z):=p_{{S}/{I}}(z)$, $\varrho_X:=\varrho_{S/I}$. 
\end{definition}

If the dimension of a scheme $X\subset \mathbb P^n_K$ is $k>0$, let $h\in S_1$ be a general linear form that is not a zero-divisor on $S/I$ and $J=(I,h)$, where $I:=I(X)$. It is well known that the first derivative of the Hilbert function of a projective scheme is admissible, and the converse is true by \cite[Corollary 3.4]{GMR}. Indeed, let $Z\subset{\mathbb P}^{n-1}_K$ be the scheme of dimension $k-1$ defined by the saturated ideal $J^{\sat}/(h) = (I,h)^{\sat}/(h)$, i.e.~the general hyperplane section of $X$. Since the linear form $h$ is not a zero-divisor on $S/I$, we have the short exact sequence
$0 \rightarrow (S/I)_{t-1} \xrightarrow{\cdot h} (S/I)_t \rightarrow (S/J)_t \rightarrow 0$ \ 
that gives $H_{S/J}(t) = \Delta H_{S/I}(t)$, in particular $p_{{S}/{(I,h)}}=\Delta p_{{S}/{I}}$, and then $\varrho_{S/J}=\varrho_X+1$, so $\Delta H_X(t)=H_{S/J}(t)\geq H_Z(t)$ for every $t$ and $H_{S/J}(t)=H_Z(t)$ for $t\geq \max\{\varrho_Z,\varrho_{S/J}\}$. 

This relation between the first derivative of the Hilbert function of $X$ and the Hilbert function of $Z$ suggests to consider the following partial order.

\begin{definition}[{\cite{R}}]\label{order} 
Given two sequences of integers $A=(a_i)_{i\in \mathbb N}$ and $B=(b_i)_{i\in \mathbb N}$, we let $A\preceq B$, if $a_i\leq b_i$ for every index $i$. 
\end{definition}

\begin{remark}\label{derivata-integrale}
Let $f$ be a Hilbert function with Hilbert polynomial $p(z)$. If $g$ is an other Hilbert function with Hilbert polynomial $\Delta p(z)$ such that $g\preceq \Delta f$, then $\sum g \preceq f$. Moreover, $\sum g$ has Hilbert polynomial $p(z)-c$, where $c$ is a non-negative integer. If $\bar t$ is the minimal integer such that $g(\bar t) < \Delta f(\bar t)$, then $\sum g(t) < f(t)$ for every $t\geq \bar t$.
\end{remark}

By cohomological arguments, one gets $\reg(X)\geq \reg(Z)$ and $\reg(X)\geq \varrho_X+1$, for every scheme $X$. In particular, the following result tells how $\varrho_X$ and $\reg(Z)$ determine $\reg(X)$.

\begin{lemma}[{\cite[Lemma 3.6]{CMR}}]\label{CMR}
With the above notation, we have $\reg(X)=\max\left\{\reg(Z),\varrho_X+1\right\}$.
\end{lemma}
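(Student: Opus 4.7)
I would prove the lemma by establishing the nontrivial inequality $\reg(X) \leq m := \max\{\reg(Z), \varrho_X + 1\}$; the reverse $\reg(X) \geq m$ has already been recorded in the excerpt via standard cohomological estimates. The plan is to verify Mumford's $m$-regularity of $X$, i.e.\ $H^i(\mathcal{I}_X(m-i)) = 0$ for every $i \geq 1$, by exploiting the short exact sequence of sheaves on $\PP^n_K$
$$0 \to \mathcal{I}_X(-1) \xrightarrow{\cdot h} \mathcal{I}_X \to \mathcal{I}_Z \to 0$$
(where $\mathcal{I}_Z$ is the ideal sheaf of the hyperplane section $Z = X \cap V(h)$, pushed forward to $\PP^n_K$) and the resulting long cohomology sequence
$$\cdots \to H^{i-1}(\mathcal{I}_Z(t)) \to H^i(\mathcal{I}_X(t-1)) \xrightarrow{\cdot h} H^i(\mathcal{I}_X(t)) \to H^i(\mathcal{I}_Z(t)) \to \cdots$$

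For $i \geq 2$, the $\reg(Z)$-regularity of $Z$ gives $H^{i-1}(\mathcal{I}_Z(t)) = 0$ for every $t \geq \reg(Z) - i + 1$, so multiplication by $h$ is injective on $H^i(\mathcal{I}_X(s))$ for every $s \geq \reg(Z) - i$. Combined with Serre vanishing $H^i(\mathcal{I}_X(T)) = 0$ for $T \gg 0$, a descending induction pulls the zero cohomology back through the injective multiplications and yields $H^i(\mathcal{I}_X(s)) = 0$ for every $s \geq \reg(Z) - i$; since $m \geq \reg(Z)$, the case $s = m - i$ is covered. For $i = 1$, the sheaf sequence $0 \to \mathcal{I}_X(t) \to \mathcal{O}_{\PP^n_K}(t) \to \mathcal{O}_X(t) \to 0$ and the Euler characteristic give, for $t \geq 0$,
$$H_X(t) - p_X(t) = \sum_{j \geq 1}(-1)^j h^j(\mathcal{I}_X(t)).$$
Evaluating at $t = m - 1$: the left-hand side vanishes because $m - 1 \geq \varrho_X$, and the terms with $j \geq 2$ vanish by the first step (as $m - 1 \geq \reg(Z) - j$ for every $j \geq 2$). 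Hence $h^1(\mathcal{I}_X(m-1)) = 0$, completing the $m$-regularity check.

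The main obstacle I expect is the bookkeeping in the descending-induction step: one must align the injectivity window $H^{i-1}(\mathcal{I}_Z(t)) = 0$ for $t \geq \reg(Z) - i + 1$ precisely with the vanishing threshold $s \geq \reg(Z) - i$, so that the target degree $m-i$ is caught by the hypothesis $m \geq \reg(Z)$. A subtler point is that the two bounds $\reg(Z) \leq m$ and $\varrho_X + 1 \leq m$ must be used jointly: the cohomological argument alone handles $i \geq 2$, whereas the Hilbert-function identity is indispensable for the $i = 1$ vanishing, and this is exactly the step where the hypothesis $\varrho_X + 1 \leq m$ is invoked.
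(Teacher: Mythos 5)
Your argument is correct and complete: the descending induction on the twist for $i\geq 2$ (using injectivity of multiplication by $h$ granted by the vanishing $H^{i-1}(\mathcal I_Z(t))=0$ for $t\geq \reg(Z)-i+1$, together with Serre vanishing) and the Euler-characteristic identity evaluated at $t=m-1\geq\varrho_X$ for the $i=1$ case jointly establish $m$-regularity, while the reverse inequality $\reg(X)\geq\max\{\reg(Z),\varrho_X+1\}$ is the standard one already quoted in the text. The paper does not reprove this lemma but simply cites \cite[Lemma 3.6]{CMR}, and your cohomological argument is essentially the standard proof of that statement.
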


For a Cohen-Macaulay scheme $W\subset \mathbb P_K^n$ of dimension $k$ and degree $d$, the Castelnuovo-Mumford regularity is $\reg(W)=\varrho_W+k+1 \leq d$ (e.g.~\cite{BS}). Moreover, if $X\subset \mathbb P_K^n$ is a scheme of odd dimension $k$ with the same Hilbert function as a Cohen-Macaulay scheme, then $\reg(X)>\varrho_X+1$ \cite[Proposition 2.4]{CDG}, even if the characteristic of $K$ is positive. By exploiting the proof of that result we obtain the following more general statement.

\begin{proposition}\label{newreg}
If $X\subset \mathbb P_K^n$ is a scheme with $H_X(\varrho_X-1)>p_X(\varrho_X-1)$, then $\reg(X)>\varrho_X+1$.
\end{proposition}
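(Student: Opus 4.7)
The plan is to use Lemma \ref{CMR}, which states $\reg(X)=\max\{\reg(Z),\varrho_X+1\}$ for the general hyperplane section $Z$, and then show that $\reg(Z)\geq \varrho_X+2$. This strict inequality immediately yields $\reg(X)>\varrho_X+1$, as desired. The strategy is the one indicated in the paragraph preceding the statement, namely adapting the argument of \cite[Proposition 2.4]{CDG}.

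The first step is an elementary subtraction. By definition of $\varrho_X$ we have $H_X(\varrho_X)=p_X(\varrho_X)$, while the hypothesis gives $H_X(\varrho_X-1)>p_X(\varrho_X-1)$. Subtracting these two equalities/inequalities yields
$$\Delta H_X(\varrho_X)=H_X(\varrho_X)-H_X(\varrho_X-1)<p_X(\varrho_X)-p_X(\varrho_X-1)=\Delta p_X(\varrho_X).$$

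The second step transfers this to $Z$. Let $h\in S_1$ be a general linear form that is not a zero-divisor on $S/I$, put $J=(I,h)$, and let $Z$ be the scheme defined by $J^{\sat}/(h)$. From the short exact sequence displayed before Lemma \ref{CMR} we have $H_{S/J}=\Delta H_X$ and $p_Z=\Delta p_X$; moreover, since $J\subseteq J^{\sat}$ we have $H_Z(t)\leq H_{S/J}(t)$ for every $t$. Plugging $t=\varrho_X$ and using the first step,
$$H_Z(\varrho_X)\leq H_{S/J}(\varrho_X)=\Delta H_X(\varrho_X)<\Delta p_X(\varrho_X)=p_Z(\varrho_X),$$
so $\varrho_Z\geq \varrho_X+1$. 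Applying the general bound $\reg(Z)\geq \varrho_Z+1$ (the inequality $\reg(W)\geq \varrho_W+1$ recorded just before Lemma \ref{CMR} applied to $W=Z$) we conclude $\reg(Z)\geq \varrho_X+2$, and Lemma \ref{CMR} finishes the proof.

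The only delicate bookkeeping concerns the passage to the hyperplane section: one needs $\dim X\geq 1$ for a general $h$ to be a non-zero-divisor on $S/I$ and for $Z$ to make sense. This is automatic from the hypothesis, because if $\dim X=0$ then $p_X$ is the constant $\deg X$ and the non-decreasing Hilbert function $H_X$ satisfies $H_X(\varrho_X-1)<\deg X=p_X(\varrho_X-1)$, contradicting the assumption. I expect no real obstacle beyond assembling these ingredients; the heart of the argument is the one-line computation $\Delta H_X(\varrho_X)<\Delta p_X(\varrho_X)$, and everything else is standard.
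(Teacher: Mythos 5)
Your proof is correct and follows essentially the same route as the paper's: pass to a general hyperplane section $Z$, deduce $H_Z(\varrho_X)\leq \Delta H_X(\varrho_X)<\Delta p_X(\varrho_X)=p_Z(\varrho_X)$ from the hypothesis, conclude $\varrho_Z>\varrho_X$, and finish with $\reg(X)\geq\reg(Z)\geq\varrho_Z+1>\varrho_X+1$. The only addition is your (correct) remark that the hypothesis forces $\dim X\geq 1$, a point the paper leaves implicit.
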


\begin{proof}
Let $Z$ be a general hyperplane section of $X$. By the hypothesis we get 
$H_Z(\varrho_X)\leq \Delta H_X(\varrho_X)< \Delta p_X(\varrho_X)=p_Z(\varrho_X)$. 
Hence, we obtain $\varrho_Z>\varrho_X$ and $\reg(X)\geq \reg(Z)\geq \varrho_Z+1 > \varrho_X+1$.
\end{proof}

Polynomials $p(z)\in \mathbb Q[z]$ that are Hilbert polynomials of schemes are called {\em admissible} and are completely characterized in \cite{H66}.

The {\em Gotzmann number} $r$ of an admissible polynomial $p(z)$ is the best upper bound for the Castelnuovo-Mumford regularity of a scheme having $p(z)$ as Hilbert polynomial and is computable by using the following unique form of an admissible polynomial:
\begin{equation}\label{eq:b}
\begin{array}{lll}
p(z) & = & \binom{z+k_1}{k_1}+\binom{z+k_2-1}{k_2}+\ldots+\binom{z+k_r-(r-1)}{k_r},
\end{array}
\end{equation}
with $r,k_i\in\mathbb{N}, k_1\geq k_2\geq \ldots \geq k_r\geq 0$ \cite{Go}. We refer to \cite{Gr} for an overview of these arguments. For a constant polynomial $p(z)=d$ we have $r=d$.

A polynomial with Gotzmann number $r=1$ is of type $p(z)=\binom{h+z}{z}$ and is the Hilbert polynomial of a linear variety $X$. From now, $p(z)$ is an admissible Hilbert polynomial with Gotzmann number $r>1$.

\begin{notation}\label{varie}
We will need the following notation:
\begin{itemize}
\item $S(p(z),\varrho)$ is the set of schemes with Hilbert polynomial $p(z)$ and regularity $\varrho$ of the Hilbert function.
\item $F(p(z),\varrho)$ is the set of the Hilbert functions of the schemes in $S(p(z),\varrho)$ and we let $F(p(z)):=\cup_{\varrho} F(p(z),\varrho)$. 
\item For every $u\in F(p(z),\varrho)$, $m_u$ is the minimal possible Castelnuovo-Mumford regularity of a scheme with Hilbert function $u$;
\item $M(p(z),\varrho):=\{m_u : u\in F(p(z),\varrho)\}$ and $m^{\varrho}_{p(z)}:=\min M(p(z),\varrho)$.
\item $\bar\varrho_{p(z)}$ is the minimal integer $\varrho$ such that $F(p(z),\varrho)\not= \emptyset$.
\item $\varrho_{p(z)}$ is defined in \eqref{varrho p(z)} and its meaning is explained by Propositions \ref{prop:varrho} and \ref{prop:minimalfunction}.
\item $\bar f$ and $\bar m$ are defined in \eqref{barra m}.
\end{itemize}
\end{notation}


\begin{proposition}\label{prop:varrho}
The set $\Pi_{p(z)}:=\{1 \leq t \leq r : (p(t+h)_{t+h})^+_+ \geq p(t+h+1)\geq 1, \forall \ h\geq 0\}$ is non-empty and the minimal regularity of the admissible functions with Hilbert polynomial $p(z)$ is lower bounded by 
\begin{equation}\label{varrho p(z)}
\varrho_{p(z)}:=\left\{\begin{array}{ll}
0, & \text{ if }\min\ \Pi_{p(z)}=1 \text{ and } p(0)=1 \\
\min\ \Pi_{p(z)}, & \text{ otherwise.}
\end{array}\right.
\end{equation} 
Moreover,
\begin{enumerate}[(i)]
\item\label{it:varrho_i} $\bar \varrho_{p(z)} \geq \max\{\varrho_{p(z)}, \varrho_{\Delta p(z)}-1\}$ 
\item\label{it:varrho_ii} if $\varrho_{p(z)}> 0$, then $\varrho_{p(z)+c} \leq \varrho_{p(z)}$ and $\bar \varrho_{p(z)+c} \leq \bar\varrho_{p(z)}$ for every $c>0$.
\end{enumerate}
\end{proposition}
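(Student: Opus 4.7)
First, the Gotzmann number satisfies $r\in \Pi_{p(z)}$: by Gotzmann's theorem, the lex-segment ideal with Hilbert polynomial $p(z)$ has Castelnuovo--Mumford regularity $r$, so its Hilbert function coincides with $p(z)$ in every degree $\geq r$; admissibility there yields $(p(r+h)_{r+h})^+_+\geq p(r+h+1)$ for all $h\geq 0$, and the Gotzmann expansion \eqref{eq:b} ensures $p(z)\geq 1$ for $z\geq r-1$. For the lower bound, fix $H\in F(p(z),\varrho)$: if $\varrho\geq 1$, then $H(\varrho+h)=p(\varrho+h)$ together with the admissibility of $H$ puts $\varrho$ in $\Pi_{p(z)}$ (positivity on $[\varrho,\infty)$ is automatic, since an admissible function cannot resume positive values after vanishing and $p(z)\neq 0$); if $\varrho=0$, then $p(0)=H(0)=1$ and admissibility from degree $1$ onward forces $1\in\Pi_{p(z)}$, whence $\varrho_{p(z)}=0$ by definition. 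In either case $\varrho\geq \varrho_{p(z)}$.

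\textbf{Proof of (i).} The bound $\bar\varrho_{p(z)}\geq \varrho_{p(z)}$ is immediate from the preceding lower bound applied to any realizer. For the other, take $H\in F(p(z),\bar\varrho_{p(z)})$: the first derivative $\Delta H$ is admissible with Hilbert polynomial $\Delta p(z)$ (as recalled after Definition \ref{def:regularity}) and coincides with $\Delta p(z)$ for $t\geq \bar\varrho_{p(z)}+1$, so $\bar\varrho_{\Delta p(z)}\leq \bar\varrho_{p(z)}+1$. Applying the same lower bound to $\Delta p(z)$ gives $\varrho_{\Delta p(z)}\leq \bar\varrho_{\Delta p(z)}$, and combining the two estimates yields $\bar\varrho_{p(z)}\geq \varrho_{\Delta p(z)}-1$.

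\textbf{Proof of (ii).} The computational core is the monotonicity
\[
\bigl((a+c)_t\bigr)^+_+ \;\geq\; (a_t)^+_+ + c \qquad \text{for all } a\geq 0,\ c\geq 0,
\]
obtained by iterating $((a+1)_t)^+_+=(a_t)^+_+ +1+k(1)\geq (a_t)^+_+ +1$ from \eqref{disuguaglianze robbiano}. Evaluating at $a=p(t)$ with $t\geq \varrho_{p(z)}\geq 1$ places $\varrho_{p(z)}$ into $\Pi_{p(z)+c}$, so $\varrho_{p(z)+c}\leq \min \Pi_{p(z)+c}\leq \varrho_{p(z)}$. For the $\bar\varrho$-inequality, pick $X\in S(p(z),\bar\varrho_{p(z)})$, embed it in a sufficiently large $\mathbb{P}^N_K$, and adjoin $c$ points $q_1,\ldots,q_c$ in sufficiently general position so that they impose independent conditions on hypersurfaces of every degree $\geq 1$; then $X':=X\sqcup\{q_1,\ldots,q_c\}$ has $H_{X'}(t)=H_X(t)+c=p(t)+c$ for $t\geq \bar\varrho_{p(z)}$ and Hilbert polynomial $p(z)+c$, giving $\bar\varrho_{p(z)+c}\leq \varrho_{X'}\leq \bar\varrho_{p(z)}$.

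The main obstacle is the combinatorial monotonicity used in (ii): a single application of \eqref{disuguaglianze robbiano} is routine, but iterating it $c$ times is delicate because the binomial expansion of the argument changes at each step, and one must observe that the correction $1+k(1)$ is always $\geq 1$ in order to let the bound accumulate additively across the $c$ increments.
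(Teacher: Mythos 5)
Your treatment of non-emptiness, of the lower bound, and of the inequality $\varrho_{p(z)+c}\leq\varrho_{p(z)}$ follows essentially the paper's route (the paper likewise notes $(p(r+h)_{r+h})^+_+=p(r+h+1)$ via the lex-segment ideal, and uses $((p(t)+c)_t)^+_+\geq(p(t)_t)^+_+ +c$ from \eqref{disuguaglianze robbiano}). But there is a gap in your proof of \emph{(i)}: from \lq\lq$\Delta H$ is admissible with Hilbert polynomial $\Delta p(z)$ and regularity $\leq\bar\varrho_{p(z)}+1$\rq\rq\ you conclude $\bar\varrho_{\Delta p(z)}\leq\bar\varrho_{p(z)}+1$. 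That does not follow: $\bar\varrho_{\Delta p(z)}$ is defined through $F(\Delta p(z),\cdot)$, i.e.\ through Hilbert functions of \emph{schemes}, which are admissible functions whose first derivative is again admissible; mere admissibility of $\Delta H$ does not place it in any $F(\Delta p(z),\varrho)$, since nothing guarantees that $\Delta^2 H$ is admissible. The detour through $\bar\varrho_{\Delta p(z)}$ is both unjustified and unnecessary: the inequality you actually need is $\varrho_{\Delta p(z)}\leq\bar\varrho_{p(z)}+1$, and it follows at once by applying the lower-bound assertion of the proposition --- which concerns \emph{all admissible functions} with the given Hilbert polynomial, not only those in $F(p(z),\varrho)$ --- to the admissible function $\Delta H$. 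This is exactly the paper's argument. Note that this also exposes a wording slip earlier in your proof: you prove the lower bound only for $H\in F(p(z),\varrho)$, although the statement (and the use you need in \emph{(i)}) concerns arbitrary admissible functions; your argument in fact only uses admissibility of $H$, so it proves the stronger claim verbatim, but it should be phrased that way.

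For the inequality $\bar\varrho_{p(z)+c}\leq\bar\varrho_{p(z)}$ in \emph{(ii)} you take a genuinely different route from the paper: you re-embed a scheme realizing $\bar\varrho_{p(z)}$ in a larger projective space and adjoin $c$ general points imposing independent conditions in every positive degree, so that the new scheme has Hilbert function $H_X(t)+c$ for $t\geq 1$ and Hilbert polynomial $p(z)+c$. This is correct (the enlargement of the ambient space makes $(I_X)_1\neq 0$, and multiplying separating forms by linear forms handles all degrees), and it has the advantage that admissibility of the new function and of its first derivative is automatic because the object is an actual scheme; the paper instead stays numerical, adding $c$ to the Hilbert function and checking admissibility of the modified function and of its derivative via \eqref{disuguaglianze robbiano}, which avoids any geometric general-position argument. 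Either way the hypothesis $\varrho_{p(z)}>0$ is what prevents trouble in degree $0$, as you implicitly use.
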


\begin{proof}
Observe that $(p(r+h)_{r+h})^+_+ = p(r+h+1)$, for every $h\geq 0$, by the well-known fact that the Gotzmann number $r$ of an admissible polynomial $p(z)$ is the regularity of the unique saturated lex-segment ideal with Hilbert polynomial $p(z)$ (for example, see \cite[Proposition 3.7]{Gr}). Hence, the set $\Pi_{p(z)}$ is non-empty.

Let $f$ be an admissible function with Hilbert polynomial $p(z)$ and regularity $\varrho$. Then, for every $s\geq \varrho$, we have $(p(s)_{s})^+_+ =(f(s)_{s})^+_+ \geq f(s+1)_{s+1} = p(s+1)_{s+1}\geq 1$ and $s\in \Pi_{p(z)}$. Hence, the regularity of a Hilbert function with Hilbert polynomial $p(z)$ is lower bounded by $\varrho_{p(z)}$.

For the assertion \emph{(\ref{it:varrho_i})}, if $g$ is a function of $F(p(z),\bar\varrho_{p(z)})$, then $\bar\varrho_{p(z)}\geq \varrho_{p(z)}$ and $\Delta g$ is admissible with regularity $\bar\varrho_{p(z)}+1$, hence $\varrho_{\Delta p(z)}\leq \bar \varrho_{p(z)}+1$. For \emph{(\ref{it:varrho_ii})}, we obtain both $\varrho_{p(z)+c} \leq \varrho_{p(z)}$ and $\bar \varrho_{p(z)+c} \leq \bar\varrho_{p(z)}$ by $\Delta (p(z)+c)=\Delta p(z)$ and by the first formula of \eqref{disuguaglianze robbiano}, because $((p(t)+c)_{t})^+_+ \geq (p(t)_t)^+_+ +c\geq p(t+1)+c$, for every $t\geq \varrho_{p(z)}$.
\end{proof}


\section{Minimal functions}\label{sec:minimalFunctions}

In this section, we will prove there exists the minimum of the Hilbert functions with Hilbert polynomial $p(z)$ and regularity $\varrho$, for every $\varrho\geq \varrho_{p(z)}$, with respect to the partial order $\preceq$ of Definition \ref{order}. Each of these minimal functions will be of the following type.

\begin{definition} \label{def:minimal functions}
For every $\varrho\geq \varrho_{p(z)}$, we let
\begin{equation} \label{alla roberts}
f_{p(z)}^{\varrho}(t):=\left\{\begin{array}{ll}
p(t), & \text{ if } t\geq \varrho \\
(f_{p(z)}^{\varrho}(t+1)_{t+1})^-_- , & \text{ otherwise}
\end{array}\right. 
\end{equation} 
\begin{equation}\label{def: new function} g_{p(z)}^{\varrho}(t):=\left\{\begin{array}{ll} p(t), & \text{ if } t\geq \varrho \\ p(t)+1, &\text{ if } t=\varrho-1 \\(g_{p(z)}^{\varrho}(t+1)_{t+1})^-_- , & \text{ otherwise.}\end{array}\right.\end{equation} 
\end{definition}

\begin{proposition}\label{prop:minimalfunction}
For every integer $\varrho_{p(z)}\leq \varrho\leq r-1$, $f_{p(z)}^{\varrho}$ is admissible, hence $\varrho_{p(z)}$ is the minimal regularity of Hilbert functions with Hilbert polynomial $p(z)$. Moreover, $f_{p(z)}^\varrho\succeq f_{p(z)}^{\varrho+1}$ and $f_{p(z)}^{\varrho}$ is the minimal Hilbert function with Hilbert polynomial $p(z)$ and regularity $\leq \varrho$. In particular, $f_{p(z)}^{\varrho_{p(z)}}$ has regularity $\varrho_{p(z)}$ and, if $\varrho>\varrho_{p(z)}$, then $f_{p(z)}^{\varrho}(\varrho-1)\leq p(\varrho-1)$.
\end{proposition}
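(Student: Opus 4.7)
The proof relies almost entirely on the arithmetic of binomial expansions recorded in \eqref{doppia azione} and \eqref{disuguaglianze robbiano}: the inequality $((a_t)^-_-)^+_+ \geq a$, the monotonicity of the operation $^-_-$ (immediate from the second formula in \eqref{disuguaglianze robbiano}), and the elementary identity $\bigl((b_t)^+_+\bigr)^-_- = b$, where the outer $^-_-$ is taken in base $t+1$; this last identity holds because the condition $k(j) \geq j$ in the expansion of $b$ forces $k(j)+1 \geq j+1$, so $(b_t)^+_+$ is already the binomial expansion of its own value in base $t+1$.

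First I would verify admissibility of $f^\varrho := f_{p(z)}^{\varrho}$. For $t \geq \varrho$ the bound $(p(t)_t)^+_+ \geq p(t+1)$ is immediate from $t \geq \varrho \geq \varrho_{p(z)}$ and the defining property of $\Pi_{p(z)}$. For $t < \varrho$, substituting the recursion and invoking \eqref{doppia azione} gives $(f^\varrho(t)_t)^+_+ = \bigl((f^\varrho(t+1)_{t+1})^-_-\bigr)^+_+ \geq f^\varrho(t+1)$ in one line. To complete admissibility one checks $f^\varrho(0) = 1$: a trivial downward induction (using $(a_t)^-_- \geq 1$ for $a \geq 1$, visible from the explicit formula) shows $f^\varrho(t) \geq 1$ for $t \leq \varrho$, and in base $1$ any positive integer $a$ expands as $\binom{a}{1}$, so $(a_1)^-_- = \binom{a-1}{0} = 1$. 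Combined with Proposition \ref{prop:varrho}\emph{(\ref{it:varrho_i})}, this shows that the minimal regularity of Hilbert functions with polynomial $p(z)$ equals $\varrho_{p(z)}$ and is realised by $f^{\varrho_{p(z)}}$; sandwiching between construction and lower bound gives in particular that the regularity of $f^{\varrho_{p(z)}}$ equals $\varrho_{p(z)}$.

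For the minimality statement, let $g$ be any admissible function with Hilbert polynomial $p(z)$ and regularity $\leq \varrho$, so that $g(t) = p(t) = f^\varrho(t)$ for $t \geq \varrho$, and I would induct downward on $t$. Given $g(t+1) \geq f^\varrho(t+1)$, admissibility of $g$ yields $(g(t)_t)^+_+ \geq g(t+1)$; taking $^-_-$ in base $t+1$ and using the pivotal identity above converts this into $g(t) \geq (g(t+1)_{t+1})^-_-$, and monotonicity of $^-_-$ then gives $(g(t+1)_{t+1})^-_- \geq (f^\varrho(t+1)_{t+1})^-_- = f^\varrho(t)$. The comparison $f^\varrho \succeq f^{\varrho+1}$ follows from the very same template: at $t = \varrho$ one checks $p(\varrho) \geq (p(\varrho+1)_{\varrho+1})^-_- = f^{\varrho+1}(\varrho)$ starting from $(p(\varrho)_\varrho)^+_+ \geq p(\varrho+1)$, and downward induction by monotonicity propagates the inequality. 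Identical reasoning applied to $(p(\varrho-1)_{\varrho-1})^+_+ \geq p(\varrho)$, valid because $\varrho - 1 \geq \varrho_{p(z)}$, yields the last assertion $f^\varrho(\varrho-1) \leq p(\varrho-1)$ for $\varrho > \varrho_{p(z)}$. The only obstacle I anticipate is purely clerical: carefully tracking the base of expansion across the alternating $^+_+$ and $^-_-$ operations, and formalising neatly the pivotal identity $\bigl((b_t)^+_+\bigr)^-_- = b$ on which every single inequality ultimately rests.
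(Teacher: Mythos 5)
Your proof is correct and follows essentially the same route as the paper's: the paper compresses your three ingredients (the identity $\bigl((b_t)^+_+\bigr)^-_- = b$, the monotonicity of $^-_-$, and $((a_t)^-_-)^+_+ \geq a$) into the single observation that $(a_t)^-_-$ is the \emph{least} integer $b$ with $a \leq (b_{t-1})^+_+$, from which admissibility, minimality and $f^\varrho_{p(z)} \succeq f^{\varrho+1}_{p(z)}$ all follow ``by construction'' exactly as in your downward inductions. The only point the paper treats separately, and which your base-$1$ computation $(a_1)^-_- = 1$ already covers implicitly, is the degenerate case $\varrho_{p(z)}=0$, $\varrho=1$ of the last assertion, where one should not invoke $(p(0)_0)^+_+\geq p(1)$ but simply note $f^1_{p(z)}(0)=1=p(0)$.
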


\begin{proof} By Proposition \ref{prop:varrho}, the regularity of every Hilbert function with Hilbert polynomial $p(z)$ is lower bounded by $\varrho_{p(z)}$.

With the notation introduced in formula (\ref{espansione}), for a positive integer $a$, $(a_t)^-_-$ is the smallest integer $b$ such that $a\leq (b_{t-1})^+_+$, thanks to formula \eqref{doppia azione}. Hence, by construction the numerical function $f_{p(z)}^{\varrho}$ is the minimal Hilbert function with Hilbert polynomial $p(z)$. Moreover, $f_{p(z)}^{\varrho_{p(z)}}$ has regularity $\varrho_{p(z)}$, by definition of $\varrho_{p(z)}$. 
The last assertion holds by construction, also if $\varrho_{p(z)}=0$ and $\varrho=1$, because in this case $f_{p(z)}^{\varrho_{p(z)}}(0)=p(0)=1$.
\end{proof}

\begin{remark}\label{rm:minimal Roberts}
In \cite{R}, L.G.~Roberts introduces the minimal Hilbert function we denote by $f^{r-1}_{p(z)}$, showing that it is the Hilbert function of the so-called tight fan constructed by Hartshorne and also the Hilbert function of the saturated lex-segment ideal with the given Hilbert polynomial \cite[Lemma 5.1 and Theorem 5.3]{R}. This function attains the equality in the Hyperplane Restriction Theorem due to M. Green \cite[Theorem 3.4]{Gr}. 
\end{remark}

\begin{remark}\label{varrho'}
(1) For every $\varrho\geq r$, we can set $f_{p(z)}^{\varrho}:=f_{p(z)}^{r-1}$, because $(p(r+h)_{r+h})^+_+ = p(r+h+1)$, for every 
$h\geq 0$, \cite[Proposition 3.7]{Gr}.

(2) If $p(z)=d$ is a constant polynomial, for every $1\leq \varrho\leq d-1$, the corresponding minimal function $f_d^{\varrho}$ has regularity equal to $\varrho$ and every scheme with this Hilbert function has Castelnuovo-Mumford regularity $\varrho+1$, because it is Cohen-Macaulay. 

(3) If the regularity of $f_{p(z)}^{\varrho}$ is $\varrho'<\varrho$, then $f_{p(z)}^{\varrho'}=f_{p(z)}^{\varrho}$. For example, if we take the admissible polynomial $p(z)=5z-3$ with Gotzmann number $r=7$, we obtain $\varrho_{p(z)}=3$ and $f_{p(z)}^3=f_{p(z)}^{4}$ because $(p(4)_4)^-_-=p(3)$, so the regularity of $f_{p(z)}^{4}$ is $3<\varrho=4$. On the other hand, we have $f_{p(z)}^{4}\not= f_{p(z)}^{5}$ because $p(4)=17$ and $f_{p(z)}^{5}(4)=16$. Anyway, there exists the minimal function with regularity $\varrho$ even when the regularity of $f^{\varrho}_{p(z)}$ is strictly lower than $\varrho$. 
\end{remark}

\begin{proposition}\label{minimal paolo}
For every $\varrho\geq \varrho_{p(z)}$, the function $g_{p(z)}^{\varrho}$ is admissible and, if the regularity of $f^\varrho_{p(z)}$ is $\varrho'<\varrho$, then $g_{p(z)}^{\varrho}$ is the minimal admissible function with regularity $\varrho$ and Hilbert polynomial $p(z)$. Moreover, $f^\varrho_{p(z)}\preceq g_{p(z)}^{\varrho}$.
\end{proposition}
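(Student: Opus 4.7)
The statement has three parts: (a) admissibility of $g := g_{p(z)}^\varrho$; (b) under the hypothesis that the regularity $\varrho'$ of $f := f_{p(z)}^\varrho$ satisfies $\varrho' < \varrho$, the function $g$ is the $\preceq$-minimum among admissible functions with Hilbert polynomial $p(z)$ and regularity exactly $\varrho$; and (c) $f \preceq g$. I plan to address them in this order. The ambient tools are the characterization of $(a_t)^-_-$ as the smallest $b$ with $a \leq (b_{t-1})^+_+$ (recorded in the proof of Proposition \ref{prop:minimalfunction}) and the monotonicity of both $(\cdot)^-_-$ and $(\cdot)^+_+$ in their first argument, which follows by iterating the identities in \eqref{disuguaglianze robbiano}.

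\textbf{Admissibility.} I will verify $g(t+1) \leq (g(t)_t)^+_+$ for every $t$. For $t \geq \varrho$ the inequality becomes $p(t+1) \leq (p(t)_t)^+_+$, valid because $t \geq \varrho_{p(z)}$ implies $t \in \Pi_{p(z)}$ by Proposition \ref{prop:varrho}. For $t = \varrho - 1$ the inequality to establish is $p(\varrho) \leq ((p(\varrho - 1) + 1)_{\varrho - 1})^+_+$; rewriting the right-hand side via \eqref{disuguaglianze robbiano} as $(p(\varrho - 1)_{\varrho - 1})^+_+ + 1 + k(1)$ yields the bound at once when $\varrho - 1 \geq \varrho_{p(z)}$. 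In the boundary case $\varrho = \varrho_{p(z)}$ the defining inequality of $\Pi_{p(z)}$ at index $\varrho_{p(z)} - 1$ is not available, so I exploit that $f$ is admissible (Proposition \ref{prop:minimalfunction}), whence $(f(\varrho - 1)_{\varrho - 1})^+_+ \geq p(\varrho)$, together with $f(\varrho - 1) \leq p(\varrho - 1)$ and the monotonicity of $(\cdot)^+_+$. For $t < \varrho - 1$ the recursive definition of $g$ combined with \eqref{doppia azione} gives $(g(t)_t)^+_+ \geq g(t+1)$ immediately, and iterating the recursion down to $t = 0$ forces $g(0) = 1$.

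\textbf{Minimality.} Let $h$ be any admissible function with Hilbert polynomial $p(z)$ and regularity $\varrho$. Then $h(t) = p(t) = g(t)$ for $t \geq \varrho$ and $h(\varrho - 1) \neq p(\varrho - 1)$. The admissibility constraint $p(\varrho) = h(\varrho) \leq (h(\varrho - 1)_{\varrho - 1})^+_+$, together with the characterization of $(\cdot)^-_-$, forces $h(\varrho - 1) \geq (p(\varrho)_\varrho)^-_- = f(\varrho - 1)$. Under the hypothesis $\varrho' < \varrho$, the value $f(\varrho - 1)$ coincides with $p(\varrho - 1)$, so combining with $h(\varrho - 1) \neq p(\varrho - 1)$ one obtains $h(\varrho - 1) \geq p(\varrho - 1) + 1 = g(\varrho - 1)$. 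A descending induction from $t = \varrho - 1$ downwards, using $h(t) \geq (h(t+1)_{t+1})^-_-$ and monotonicity of $(\cdot)^-_-$, propagates $g(t) \leq h(t)$ to every smaller $t$, yielding $g \preceq h$.

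\textbf{Comparison.} The two functions agree on $t \geq \varrho$; at $t = \varrho - 1$, Proposition \ref{prop:minimalfunction} gives $f(\varrho - 1) \leq p(\varrho - 1) < p(\varrho - 1) + 1 = g(\varrho - 1)$; and for $t < \varrho - 1$ both functions obey the identical recursion $v(t) = (v(t+1)_{t+1})^-_-$, so monotonicity of $(\cdot)^-_-$ together with descending induction concludes $f \preceq g$. The most delicate step of the whole argument is the admissibility of $g$ at $t = \varrho - 1$ in the boundary case $\varrho = \varrho_{p(z)}$, where the $\Pi_{p(z)}$ inequality is not directly usable at the relevant index and one must instead pivot on the admissibility of $f_{p(z)}^{\varrho_{p(z)}}$ and the monotonicity of the Macaulay operators.
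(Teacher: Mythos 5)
Your overall engine---the characterization of $(a_t)^-_-$ as the least $b$ with $a\leq (b_{t-1})^+_+$, together with the monotonicity of the Macaulay operators obtained by iterating \eqref{disuguaglianze robbiano}---is exactly what the paper invokes (its proof is a one-line reference back to Proposition \ref{prop:minimalfunction}). Your treatment of the minimality claim, of the comparison $f^\varrho_{p(z)}\preceq g^\varrho_{p(z)}$, and of admissibility in every degree except $\varrho-1$ in the boundary case is correct and usefully more explicit than the paper's.

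The gap sits precisely in the step you yourself flag as most delicate. When $\varrho=\varrho_{p(z)}$ you invoke $f^{\varrho}_{p(z)}(\varrho-1)\leq p(\varrho-1)$; Proposition \ref{prop:minimalfunction} guarantees this only for $\varrho>\varrho_{p(z)}$, and it is genuinely false at $\varrho=\varrho_{p(z)}$. The paper's own Example \ref{ex:minime2} is a counterexample: for $p(z)=5z-3$ one has $\varrho_{p(z)}=3$, $f^{3}_{p(z)}(2)=8$ and $p(2)=7$ (likewise $f^{6}_{12z-25}(5)=36>35=p(5)$). Indeed $\varrho_{p(z)}-1\notin\Pi_{p(z)}$ typically means $(p(\varrho_{p(z)}-1)_{\varrho_{p(z)}-1})^+_+<p(\varrho_{p(z)})$, which forces $f^{\varrho_{p(z)}}(\varrho_{p(z)}-1)=(p(\varrho_{p(z)})_{\varrho_{p(z)}})^-_->p(\varrho_{p(z)}-1)$. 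What your monotonicity argument actually needs is $f^{\varrho_{p(z)}}(\varrho_{p(z)}-1)\leq p(\varrho_{p(z)}-1)+1$, and nothing you cite yields that; moreover the unconditional admissibility claim itself breaks at the low end (for $p(z)=6z^2-18z+37$ one has $\varrho_{p(z)}=1$ and $g^{1}_{p(z)}(0)=p(0)+1=38\neq 1$, so $g^{1}_{p(z)}$ is not an O-sequence), so no argument can close this case as stated. The clean repair is to observe that the proposition only has content under its ``if'' hypothesis, i.e.\ that $f^\varrho_{p(z)}$ has regularity $\varrho'<\varrho$: by Proposition \ref{prop:minimalfunction} this forces $\varrho>\varrho_{p(z)}$ and $f^\varrho_{p(z)}(\varrho-1)=p(\varrho-1)$, after which your main-case computation $((p(\varrho-1)+1)_{\varrho-1})^+_+\geq (p(\varrho-1)_{\varrho-1})^+_++1\geq p(\varrho)+1$ settles admissibility at $\varrho-1$ and the rest of what you wrote goes through verbatim. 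Either add that restriction explicitly or prove $f^{\varrho_{p(z)}}(\varrho_{p(z)}-1)\leq p(\varrho_{p(z)}-1)+1$; as written the boundary case is not established.
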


\begin{proof}
It is enough to argue as in the proof of Proposition \ref{prop:minimalfunction}.
\end{proof}

Now, as we announced at the beginning of this section, for every $\varrho\geq \varrho_{p(z)}$ we have the minimal Hilbert function with regularity $\varrho$, that is either $f_{p(z)}^{\varrho}$ or $g_{p(z)}^{\varrho}$. Anyway, we are interested in Hilbert functions of schemes, that are admissible functions with also admissible first derivative.

\begin{example}\label{ex:first derivative}
If we consider $p(z)=6z^2-18z+37$ we get $\varrho_{p(z)}=1$, $\Delta p(z)=12z-24$ and $\varrho_{\Delta p(z)}=5$, so $f_{p(z)}^1$ is admissible but its first derivative $(1,24,12z-24)$ is not, because of its behavior in the degrees $\varrho_{p(z)}\leq t \leq \varrho_{\Delta p(z)}$. For example, $\Delta f_{p(z)}^1$ is not admissible in $2$ because $\Delta f_{p(z)}^1(2)=0$.
\end{example}

We will show that the construction of a minimal function $f^{\varrho}_{p(z)}$ guarantees the admissibility of the first derivative of the function in the degrees strictly lower than $\varrho$. As a consequence, we will prove there exists the minimum of $F(p(z),\varrho)$ with respect to the partial order $\preceq$ of Definition \ref{order}, as soon as $F(p(z),\varrho)$ is non-empty, and this minimal function is either $f_{p(z)}^{\varrho}$ or $g_{p(z)}^{\varrho}$.

The following technical result is used in Lemma \ref{lemma:first derivative} to detect when the first derivative of a minimal function $f^{\varrho}_{p(z)}$ is admissible. 

\begin{lemma}\label{menomeno}
Assume that an integer $a$ with binomial expansion $\binom{k(t)}{t}+ \ldots+ \binom{k(j)}{j}$ in base $t$ can be written also as 
$a=\binom{h(t)}{t}+ \binom{h(t-1)}{t-1}+ \ldots+ \binom{h(j')}{j'}$ with $h(u)>h(u-1)$, for every $j'<u\leq t$, and $k(t)> h(t)$.
Then $a=\binom{k(t)}{t}$ and the two writings of $a$ are 
$\binom{k(t)}{t}$ and $\sum_{i=0}^{t} \binom{k(t)-1-i}{t-i}$, respectively.
\end{lemma}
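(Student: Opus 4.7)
The plan is to turn the strict inequality $h(t)<k(t)$ into a quantitative sandwich: both sides of the displayed identity should equal $\binom{k(t)-1}{t-1}$. I begin by rewriting the equality of the two writings of $a$ as
\begin{equation*}
\binom{k(t)}{t}-\binom{h(t)}{t} \;=\; \sum_{u=j'}^{t-1}\binom{h(u)}{u} \;-\; \sum_{u=j}^{t-1}\binom{k(u)}{u},
\end{equation*}
so the goal becomes to show that the left-hand side is at least $\binom{k(t)-1}{t-1}$ and the right-hand side is at most the same quantity.

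For the lower bound, the hypothesis $h(t)\leq k(t)-1$ combined with Pascal's identity gives
\begin{equation*}
\binom{k(t)}{t}-\binom{h(t)}{t}\;\geq\;\binom{k(t)}{t}-\binom{k(t)-1}{t}\;=\;\binom{k(t)-1}{t-1}.
\end{equation*}
For the upper bound, the strict decrease $h(t)>h(t-1)>\cdots>h(j')$ combined with $h(t)\leq k(t)-1$ yields $h(u)\leq k(t)-1-(t-u)$ for each $j'\leq u\leq t-1$, whence
\begin{equation*}
\sum_{u=j'}^{t-1}\binom{h(u)}{u}\;\leq\;\sum_{u=0}^{t-1}\binom{k(t)-1-t+u}{u}\;=\;\binom{k(t)-1}{t-1},
\end{equation*}
the last equality being the hockey-stick identity $\sum_{w=0}^{m}\binom{N+w}{w}=\binom{N+m+1}{m}$ with $N=k(t)-1-t$ and $m=t-1$. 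Non-negativity of each $\binom{k(u)}{u}$ then yields the required upper bound on the right-hand side of the rewritten identity.

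Comparing the two bounds forces every inequality above to be an equality. In particular $\sum_{u=j}^{t-1}\binom{k(u)}{u}\leq 0$, and since $k(u)\geq u\geq 1$ makes each such term at least $1$, the index range must be empty: $j=t$ and $a=\binom{k(t)}{t}$. Equality in the upper bound then pins down $h(t)=k(t)-1$, $h(u)=k(t)-1-(t-u)$ for every $u$ in the second writing, together with $j'=0$ (otherwise the $u=0$ term $\binom{k(t)-1-t}{0}=1$ would be missing). Reindexing via $i=t-u$ produces exactly $a=\sum_{i=0}^{t}\binom{k(t)-1-i}{t-i}$, as claimed. The step that requires a little care is the clean verification of the hockey-stick identity, together with checking that the degenerate case $k(t)=t$ cannot arise under the hypotheses, because it would force every term of the second writing to vanish and hence $a=0$.
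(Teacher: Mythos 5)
Your proof is correct and is essentially the paper's own argument: both rest on the hockey-stick identity $\binom{k(t)}{t}=\sum_{i=0}^{t}\binom{k(t)-1-i}{t-i}$ together with the term-by-term domination $h(t-i)\leq k(t)-1-i$ forced by the strict decrease of the $h$'s, and your splitting off of the top term via Pascal's rule plus the explicit remark that the tail $\sum_{u=j}^{t-1}\binom{k(u)}{u}$ must vanish are only a cosmetic reorganization of the paper's squeeze $\binom{k(t)}{t}\leq a\leq \binom{k(t)}{t}$. Your attention to the degenerate case $k(t)=t$ is a small point the paper leaves implicit, and is handled correctly.
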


\begin{proof}
Let $b=k(t)$.\hfill By\hfill the\hfill definition\hfill of\hfill binomial\hfill expansion\hfill we\hfill have\\ $\binom{b}{t} \leq \binom{h(t)}{t}+ \binom{h(t-1)}{t-1}+ \ldots+ \binom{h(j')}{j'}$.
 Replacing $\binom{b}{t}$ by $\sum_{i=0}^{t} \binom{b-1-i}{t-i}$ we get 
$\sum_{i=0}^{t} \binom{b-1-i}{t-i} \leq \sum_{i=0}^{t-j'} \binom{h(t-i) }{t-i}=a$. 
On the other hand, for every $i=0,\dots, t-j'$ we have $\binom{b-1-i}{t-i}\geq \binom{h(t)-i}{t-i}\geq  \binom{h(t-i)}{t-i}$, hence $j'=0$ and by the equality 
between the two members we obtain $a=\binom{b}{t}$.
\end{proof}

\begin{lemma}\label{lemma:first derivative}
\begin{enumerate}[(i)]
\item \label{it:first derivative_i} For every $\varrho\geq \varrho_{p(z)}$ and $1\leq t<\varrho$, $\Delta f^{\varrho}_{p(z)}$ is admissible in $t$, because $\Delta f^{\varrho}_{p(z)}(t)= (\Delta f^{\varrho}_{p(z)}(t+1)_{t+1})^-_-$.
\item \label{it:first derivative_ii} For every $\varrho > \varrho_{p(z)}$ and $t < \varrho$, $\Delta f^{\varrho}_{p(z)}(t) \leq \Delta f^{\varrho-1}_{p(z)}(t)$.
\item \label{it:first derivative_iii} For every $\varrho \geq\max\{\varrho_{p(z)}, \varrho_{\Delta  p(z)}-1\}$,  $\Delta f^{\varrho}_{p(z)}$ is admissible.
\end{enumerate}
\end{lemma}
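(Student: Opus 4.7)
Throughout write $f = f^\varrho_{p(z)}$. For part (i), the identity is a direct binomial manipulation. Expand $f(t+1) = \sum_{i=j}^{t+1} \binom{b_i}{i}$ in canonical form in base $t+1$, with $b_{t+1} > \cdots > b_j \geq j \geq 1$. Since $t+1 \leq \varrho$, the recursion of Definition \ref{def:minimal functions} applied twice yields $f(t) = \sum_i \binom{b_i-1}{i-1}$ and $f(t-1) = \sum_i \binom{b_i-2}{i-2}$. Applying Pascal's identity $\binom{m}{k} - \binom{m-1}{k-1} = \binom{m-1}{k}$ termwise one computes $\Delta f(t+1) = \sum_i \binom{b_i-1}{i}$ and $\Delta f(t) = \sum_i \binom{b_i-2}{i-1}$, so that $(\Delta f(t+1)_{t+1})^-_- = \Delta f(t)$. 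The admissibility conclusion $\Delta f(t+1) \leq (\Delta f(t)_t)^+_+$ then follows at once from the inequality $((a_t)^-_-)^+_+ \geq a$ in \eqref{doppia azione}. The main technical subtlety arises in the bottom-index boundary $j=1$, where Pascal's identity and the conventions on $\binom{n}{m}$ for $n<m$ or $m<0$ interact delicately, and one appeals to the correction formulas \eqref{disuguaglianze robbiano}; this is the most delicate point of the argument.

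For part (ii), I track the gap $g(t) := f^{\varrho-1}(t) - f^{\varrho}(t)$ on $[0,\varrho-1]$ and show that it is non-negative and non-decreasing in $t$. Non-negativity is by downward induction: at $t = \varrho - 1$ it is the last assertion of Proposition \ref{prop:minimalfunction}, while the inductive step uses monotonicity of $(\,\cdot_{t+1})^-_-$ in its argument, a direct consequence of \eqref{disuguaglianze robbiano}. The monotonicity of $g$ follows from the quantitative estimate $((a+c)_{t+1})^-_- - (a_{t+1})^-_- \leq c$ (also an iterated application of \eqref{disuguaglianze robbiano}) with $a = f^\varrho(t+1)$ and $c = g(t+1)$, valid whenever $t \leq \varrho-2$ so that both $f^\varrho$ and $f^{\varrho-1}$ lie in their respective recursive zones at $t$; the boundary step $t = \varrho-2 \to \varrho-1$ is handled identically, using that $f^{\varrho-1}(\varrho-1) = p(\varrho-1)$. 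Rearranging $g(t-1) \leq g(t)$ yields precisely $\Delta f^\varrho(t) \leq \Delta f^{\varrho-1}(t)$ for $1 \leq t < \varrho$.

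For part (iii), I verify admissibility of $\Delta f^\varrho$, i.e.\ $\Delta f^\varrho(t+1) \leq (\Delta f^\varrho(t)_t)^+_+$ for every $t \geq 1$, by splitting on the position of $t$ relative to $\varrho$. For $1 \leq t < \varrho$, the inequality is immediate from (i) together with \eqref{doppia azione}. For $t > \varrho$, one has $\Delta f^\varrho(t) = \Delta p(t)$, and admissibility of $\Delta p$ at such $t$ is equivalent to $t \in \Pi_{\Delta p(z)}$, which holds because $t \geq \varrho + 1 \geq \varrho_{\Delta p(z)}$ by hypothesis. The remaining case $t = \varrho$ is a short binomial computation along the lines of (i): writing $p(\varrho) = \sum_i \binom{a_i}{\varrho-i}$ in canonical form, one has $f^\varrho(\varrho-1) = \sum_i \binom{a_i-1}{\varrho-i-1}$ and hence $\Delta f^\varrho(\varrho) = \sum_i \binom{a_i-1}{\varrho-i}$, so that $(\Delta f^\varrho(\varrho)_\varrho)^+_+ = \sum_i \binom{a_i}{\varrho-i+1} = (p(\varrho)_\varrho)^+_+ - p(\varrho)$. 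The hypothesis $\varrho \geq \varrho_{p(z)}$ places $\varrho$ in $\Pi_{p(z)}$, i.e.\ $(p(\varrho)_\varrho)^+_+ \geq p(\varrho+1)$, whence $(\Delta f^\varrho(\varrho)_\varrho)^+_+ \geq \Delta p(\varrho+1) = \Delta f^\varrho(\varrho+1)$, which closes the case. Once the binomial bookkeeping of (i) is under control, the parts (ii) and (iii) are mechanical consequences of \eqref{disuguaglianze robbiano} together with the definitions of $\varrho_{p(z)}$ and $\varrho_{\Delta p(z)}$.
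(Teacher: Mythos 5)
Your plan for (i) and (ii) follows essentially the paper's own route: the termwise formulas for $f^\varrho_{p(z)}(t)$ and $f^\varrho_{p(z)}(t-1)$ read off from the binomial expansion of $f^\varrho_{p(z)}(t+1)$, Pascal's identity, \eqref{doppia azione} for admissibility, and for (ii) the increment estimates of \eqref{disuguaglianze robbiano} anchored at $t=\varrho-1$ via the last assertion of Proposition \ref{prop:minimalfunction} (the paper propagates downward through (i) and monotonicity of $(\cdot)^-_-$ on the derivative values, you propagate the gap $f^{\varrho-1}_{p(z)}-f^{\varrho}_{p(z)}$; same tools, same result). Your treatment of the degree $t=\varrho$ in (iii) is genuinely different and has a real advantage: instead of comparing with $f^{\varrho+1}_{p(z)}$ through part (ii), as the paper does, you compute $(\Delta f^{\varrho}_{p(z)}(\varrho)_\varrho)^+_+=(p(\varrho)_\varrho)^+_+-p(\varrho)$ directly and conclude from $(p(\varrho)_\varrho)^+_+\geq p(\varrho+1)$, i.e.\ from $\varrho\geq\varrho_{p(z)}$. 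This makes (iii) independent of (ii) and shows exactly which hypothesis acts at which degree ($\varrho\geq\varrho_{p(z)}$ at $t=\varrho$, $\varrho+1\geq\varrho_{\Delta p(z)}$ for $t>\varrho$), something the paper's chain of inequalities leaves implicit.

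The one genuine gap is precisely the point you flag and then defer to \eqref{disuguaglianze robbiano}: those formulas do not settle it. The problem is that the writing $f^\varrho_{p(z)}(t)=\sum_i\binom{b_i-1}{i-1}$ produced by one application of the recursion need not be the binomial expansion of $f^\varrho_{p(z)}(t)$ in base $t$ (when $j=1$ it contains a term with lower index $0$), so the next step $f^\varrho_{p(z)}(t-1)=\bigl(f^\varrho_{p(z)}(t)_t\bigr)^-_-$ is a priori not obtained by subtracting $1$ termwise from that writing. Since \eqref{disuguaglianze robbiano} only describes how $(\cdot)^\pm_\pm$ changes when its argument is incremented, it says nothing about two different writings of the same integer. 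What is needed is the paper's Lemma \ref{menomeno}: whenever the two writings differ, the canonical expansion ends with a single binomial $\binom{k}{u}$ while the recursion's writing ends with the full staircase $\sum_{i=0}^{u}\binom{k-1-i}{u-i}$, and the identity $\binom{c}{m}=\sum_{i=0}^{m}\binom{c-1-i}{m-i}$ (with the convention that negative lower indices give $0$) shows that applying the termwise subtraction to either writing returns the same value. A milder caveat of the same kind occurs where some $b_i=i$: the corresponding terms $\binom{b_i-1}{i}$, $\binom{b_i-2}{i-1}$, or in (iii) $\binom{a_i-1}{\varrho-i}$, vanish, but they sit at the bottom of the expansion and their images under $(\cdot)^-_-$ and $(\cdot)^+_+$ vanish too, so your identities survive. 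With Lemma \ref{menomeno} (or the identity above) inserted at the flagged point, your argument is complete.
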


\begin{proof}
{\em (\ref{it:first derivative_i})} For every $1\leq t\leq \varrho-1$ by construction we have
$f^{\varrho}_{p(z)}(t+1)= \binom{k(t+1)}{t+1}+ \ldots+ \binom{k(j)}{j}$, 
$f^{\varrho}_{p(z)}(t) = \binom{k(t+1)-1}{t}+ \ldots+ \binom{k(j)-1}{j-1}$ and
$f^{\varrho}_{p(z)}(t-1) =\binom{k(t+1)-2}{t-1}+ \ldots+ \binom{k(j)-2}{j-2}$ and obtain
$\Delta f^{\varrho}_{p(z)}(t) = (\Delta f^{\varrho}_{p(z)}(t+1)_{t+1})^-_-$.

Nevertheless, we have to consider also the case the binomial expansion of $f(t)$ in base $t$ is different from the writing obtained by the binomial expansion of $f(t+1)$ in base $t+1$. This is exactly the case considered in Lemma \ref{menomeno}, for which $f(t)=\binom{k(t+1)-1}{t}+ \binom{k(t+1)-2}{t-1}\ldots+ \binom{k(t+1)-(t+1)}{0}=\binom{k(t+1)}{t}$ and the operation that consists in subtracting a unit to both the integers of the binomial coefficients gives the same result.

{\em (\ref{it:first derivative_ii})} By Proposition \ref{prop:minimalfunction}, we have $f^{\varrho}_{p(z)}(\varrho-1) \leq   p(\varrho-1)=f^{\varrho-1}_{p(z)}(\varrho-1)$.

Let $c$ be the non-negative integer such that $f^{\varrho}_{p(z)}(\varrho-1)=p(\varrho-1)-c$. Then, formula \eqref{disuguaglianze robbiano} and the definition of a minimal function give:
$$\Delta f^{\varrho}_{p(z)}(\varrho-1)=f^{\varrho}_{p(z)}(\varrho-1)-f^{\varrho}_{p(z)}(\varrho-2)=p(\varrho-1)-c-((p(\varrho-1)-c)_{\varrho-1})^-_- \leq$$
$$\leq p(\varrho-1)-c-(p(\varrho-1)_{\varrho-1})^-_- + c=\Delta f^{\varrho-1}_{p(z)}(\varrho-1).$$
Now, the thesis is proved applying fact \emph{(i)}.

{\em (\ref{it:first derivative_iii})} Due to the definition of $\varrho_{\Delta p(z)}$, to Proposition \ref{prop:minimalfunction} and to fact {\em (\ref{it:first derivative_i})}, it is enough to show that the first derivative of $f^{\varrho}_{p(z)}$ 
is admissible in $\varrho$, i.e.~$(\Delta f^{\varrho}_{p(z)}(\varrho)_\varrho)^+_+ \geq \Delta f^{\varrho}_{p(z)}(\varrho+1)$. 
We know that $\Delta f^{r-1}_{p(z)}$ is admissible because $f^{r-1}_{p(z)}$ is the Hilbert function of the unique saturated lex-segment ideal
with Hilbert polynomial $p(z)$. 
So, the admissibility of $\Delta f^{\varrho}_{p(z)}$ in $\varrho$ follows from that of $\Delta f^{\varrho+1}_{p(z)}$. Indeed, by fact {\em (\ref{it:first derivative_ii})} we obtain
$(\Delta f^{\varrho}_{p(z)}(\varrho)_{\varrho})^+_+ \geq (\Delta f^{\varrho+1}_{p(z)}(\varrho)_{\varrho})^+_+ \geq 
\Delta f^{\varrho+1}_{p(z)}(\varrho+1) \geq \Delta p(\varrho+1)=\Delta f^{\varrho}_{p(z)}(\varrho+1)$.
\end{proof}

\begin{theorem}\label{th:first derivative}
\begin{enumerate}[(i)]
\item \label{it:th_first derivative_i} $\bar\varrho_{p(z)} = \max\{\varrho_{p(z)},\varrho_{\Delta p(z)}-1\}$ and $f^{\bar\varrho_{p(z)}}_{p(z)}\in F\big(p(z),\bar\varrho_{p(z)}\big)$;
\item \label{it:th_first derivative_ii} for all $\varrho>\bar\varrho_{p(z)}$, $$F(p(z),\varrho)\not= \emptyset \Leftrightarrow
\left\{\begin{array}{ll} f_{p(z)}^\varrho \in F(p(z),\varrho), &\text{ if $f^\varrho_{p(z)}$ has regularity } \varrho, \\
g_{p(z)}^\varrho \in F(p(z),\varrho), &\text{ otherwise.} 
\end{array}\right.$$
\end{enumerate}
\end{theorem}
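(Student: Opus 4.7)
For part (i), set $\varrho_0 := \max\{\varrho_{p(z)}, \varrho_{\Delta p(z)} - 1\}$. Proposition \ref{prop:varrho}(i) gives $\bar\varrho_{p(z)} \geq \varrho_0$, so I only need to exhibit a scheme Hilbert function of regularity $\varrho_0$. By Lemma \ref{lemma:first derivative}(iii) applied at $\varrho_0$ together with Proposition \ref{prop:minimalfunction}, both $f^{\varrho_0}_{p(z)}$ and $\Delta f^{\varrho_0}_{p(z)}$ are admissible, so \cite[Corollary 3.4]{GMR} guarantees that $f^{\varrho_0}_{p(z)}$ is the Hilbert function of some scheme. Its regularity is at most $\varrho_0$ by construction; the lower bound $\bar\varrho_{p(z)} \geq \varrho_0$ then forces equality, which establishes both parts of (i) simultaneously.

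For part (ii), the backward direction is immediate. For the forward direction, fix $\varrho > \bar\varrho_{p(z)}$ with $F(p(z), \varrho) \neq \emptyset$. Since $\varrho \geq \max\{\varrho_{p(z)}, \varrho_{\Delta p(z)} - 1\}$, Lemma \ref{lemma:first derivative}(iii) again makes $f^\varrho_{p(z)}$ a scheme Hilbert function. If its regularity equals $\varrho$ we are done; otherwise the construction of $f^\varrho_{p(z)}$ forces $(p(\varrho)_\varrho)^-_- = p(\varrho - 1)$, and the claim becomes $g^\varrho_{p(z)} \in F(p(z), \varrho)$. Proposition \ref{minimal paolo} already provides admissibility of $g^\varrho_{p(z)}$ and regularity $\varrho$, so only the admissibility of $\Delta g^\varrho_{p(z)}$ remains. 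This naturally splits into four ranges: on $\{1, \ldots, \varrho - 2\}$ the recursive definition $g^\varrho_{p(z)}(t) = (g^\varrho_{p(z)}(t + 1)_{t + 1})^-_-$ has the same form as the one for $f^\varrho_{p(z)}$ below its regularity, so the argument of Lemma \ref{lemma:first derivative}(i) transfers verbatim; on $\{\varrho + 1, \varrho + 2, \ldots\}$, $\Delta g^\varrho_{p(z)}$ coincides with $\Delta p$, which is admissible there since $\varrho \geq \varrho_{\Delta p(z)}$.

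The boundary degrees $\varrho - 1$ and $\varrho$ are the crux. At $\varrho - 1$, the key observation is that on $\{0, \ldots, \varrho - 1\}$ the function $g^\varrho_{p(z)}$ coincides with $f^{\varrho - 1}_{q(z)}$ for $q(z) := p(z) + 1$, because both are obtained by iterating $(\cdot)^-_-$ starting from the common value $p(\varrho - 1) + 1$; Proposition \ref{prop:varrho}(ii) ensures $\varrho - 1 \geq \max\{\varrho_{q(z)}, \varrho_{\Delta q(z)} - 1\}$, so Lemma \ref{lemma:first derivative}(iii) applies to $q$ and produces $(\Delta g^\varrho_{p(z)}(\varrho - 1)_{\varrho - 1})^+_+ \geq \Delta f^{\varrho - 1}_{q(z)}(\varrho) = \Delta p(\varrho) > \Delta g^\varrho_{p(z)}(\varrho)$. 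At $\varrho$, I pick any $f \in F(p(z), \varrho)$ supplied by the hypothesis: the Case-B equality $(p(\varrho)_\varrho)^-_- = p(\varrho - 1)$ together with the regularity of $f$ forces $f(\varrho - 1) \geq p(\varrho - 1) + 1 = g^\varrho_{p(z)}(\varrho - 1)$, whence $\Delta f(\varrho) \leq \Delta g^\varrho_{p(z)}(\varrho)$; the strict monotonicity of $(\cdot_\varrho)^+_+$ (a consequence of formula \eqref{disuguaglianze robbiano}) combined with admissibility of $\Delta f$ at $\varrho$ then yields $(\Delta g^\varrho_{p(z)}(\varrho)_\varrho)^+_+ \geq \Delta f(\varrho + 1) = \Delta g^\varrho_{p(z)}(\varrho + 1)$.

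The main obstacle is precisely this admissibility check at the two boundary degrees, because it is the only step that requires combining all of Lemma \ref{lemma:first derivative}, Proposition \ref{prop:varrho}(ii), Proposition \ref{minimal paolo}, and the existence of some $f \in F(p(z), \varrho)$. One technical subtlety is verifying that Proposition \ref{prop:varrho}(ii), stated under the hypothesis $\varrho_{p(z)} > 0$, also covers the degenerate boundary $\varrho_{p(z)} = 0$; there a direct check of $\varrho_{q(z)} \leq \varrho - 1$ from the definition of $\varrho_{q(z)}$ and the inequalities of \eqref{disuguaglianze robbiano} replaces the appeal to Proposition \ref{prop:varrho}(ii).
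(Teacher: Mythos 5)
Your proof is correct and follows the paper's strategy almost exactly: part (i) is identical (lower bound from Proposition \ref{prop:varrho}, existence from Lemma \ref{lemma:first derivative}\emph{(iii)}), and in part (ii) you make the same reduction to checking admissibility of $\Delta g^\varrho_{p(z)}$ at the two boundary degrees $\varrho-1$ and $\varrho$, handling degree $\varrho$ exactly as the paper does by comparing with an arbitrary element of $F(p(z),\varrho)$. The one place you diverge is degree $\varrho-1$: the paper bounds $\Delta g^\varrho_{p(z)}(\varrho-1)$ from below by $\Delta f^{\varrho}_{p(z)}(\varrho-1)$ via \eqref{disuguaglianze robbiano} and uses $\Delta f^{\varrho}_{p(z)}(\varrho)=\Delta g^\varrho_{p(z)}(\varrho)+1$, whereas you identify $g^\varrho_{p(z)}$ in degrees $\leq\varrho-1$ with the minimal function $f^{\varrho-1}_{p(z)+1}$ of the shifted polynomial and invoke Lemma \ref{lemma:first derivative}\emph{(iii)} for $p(z)+1$. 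Both routes are sound; yours is a clean structural observation but costs the extra verification that the hypotheses of Lemma \ref{lemma:first derivative}\emph{(iii)} hold for $p(z)+1$ (via Proposition \ref{prop:varrho}\emph{(ii)}, plus the $\varrho_{p(z)}=0$ boundary, which is in fact vacuous since $F(p(z),1)=\emptyset$ whenever $p(0)=1$), while the paper's comparison with $\Delta f^{\varrho}_{p(z)}$ needs only the already-established facts about $p(z)$ itself.
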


\begin{proof}
\emph{(\ref{it:th_first derivative_i})} First, recall that $\bar\varrho_{p(z)}\geq \max\{\varrho_{p(z)},\varrho_{\Delta p(z)}-1\}$ by Proposition \ref{prop:varrho}. Then, by Lemma \ref{lemma:first derivative}{\em (\ref{it:first derivative_iii})} we obtain $\bar\varrho_{p(z)} = \max\{\varrho_{p(z)},\varrho_{\Delta p(z)}-1\}$ and $f^{\bar\varrho_{p(z)}}_{p(z)}$ belongs to $F(p(z),\bar\varrho_{p(z)})$, because the regularity of $f^{\bar\varrho_{p(z)}}_{p(z)}$ is $\bar\varrho_{p(z)}$ by the definition of $\bar\varrho_{p(z)}$.

\emph{(\ref{it:th_first derivative_ii})} The case involving $f^{\varrho}_{p(z)}$ follows from Lemma \ref{lemma:first derivative}. Thus, suppose that $f^{\varrho}_{p(z)}$ has regularity $\varrho'<\varrho$. If $g^{\varrho}_{p(z)}$ belongs to $F(p(z),\varrho)$, obviously $F(p(z),\varrho)$ is non-empty. Vice versa, let $g:=g^\varrho_{p(z)}$ and observe that $(\Delta g(t)_t)^+_+ \geq \Delta g(t+1)$, for every $t\geq \varrho+1$, by definition of $\bar \varrho_{p(z)}$, and for every $1\leq t\leq\varrho-2$, by arguments analogous to those of the proof of Lemma \ref{lemma:first derivative}.

It remains to prove 
$(\Delta g(\varrho)_\varrho)^+_+ \geq \Delta g(\varrho+1)$ and $(\Delta g(\varrho-1)_{\varrho-1})^+_+ \geq \Delta g(\varrho)$. 

For the degree $\varrho$, if $g'$ belongs to $F(p(z),\varrho)$, then $g(\varrho-1)\leq g'(\varrho-1)$, hence $\Delta g(\varrho)\geq \Delta g'(\varrho)$ and $\Delta g'$ admissible implies $\Delta g$ admissible in the degree $\varrho$. 

For the degree $\varrho-1$, we obtain $(\Delta f^{\varrho}_{p(z)}(\varrho-1)_{\varrho-1})^+_+\geq \Delta f^{\varrho}_{p(z)}(\varrho)=\Delta  g(\varrho)+1$ by Lemma \ref{lemma:first derivative} and by the definition of $g$. Moreover, by \eqref{disuguaglianze robbiano} 
$$\Delta g(\varrho-1) =  p(\varrho-1)+1-((p(\varrho-1)+1)_{\varrho-1})^-_- \geq $$
$$ \geq p(\varrho-1)+1-(p(\varrho-1)_{\varrho-1})^-_- -1 = p(\varrho-1)-(p(\varrho-1)_{\varrho-1})^-_-  = \Delta f^{\varrho}_{p(z)}(\varrho-1)$$
and $\Delta f^\varrho_{p(z)}$ admissible implies the first derivative of $g$ admissible in the degree $\varrho-1$. 
\end{proof}

\begin{example}\rm \label{ex:minime2}
Consider the admissible polynomial $p(z)=5z-3$ of Remark \ref{varrho'}(3) with Gotzmann number $r=7$ and $\varrho_{p(z)}=3$. We have that  $f_{p(z)}^3=f_{p(z)}^{4}=(1,4,8,12,p(z))$, meanwhile $f_{p(z)}^{4}\not= f_{p(z)}^{5}$. So, we can take the function $g_{p(z)}^{4}=(1,4,8,13,p(z))$, that is the minimal admissible function with regularity $4$ and Hilbert polynomial $p(z)$. But, its first derivative  $\Delta g_{p(z)}^{4}=(1,3,4,5,4,5)$ is not admissible. Thus, by Theorem \ref{th:first derivative} we can affirme that $F(p(z),4)=\emptyset$, i.e.~there are not schemes with Hilbert function having regularity $4$ and Hilbert polynomial $p(z)$. 
\end{example}

If the Gotzmann number $r$ of the polynomial $p(z)$ is big, a computation of $\varrho_{p(z)}$ and of $\bar\varrho_{p(z)}$ performed according to formula \eqref{varrho p(z)} and to Theorem \ref{th:first derivative}\emph{(\ref{it:th_first derivative_i})} can be very expensive. Indeed, we have to test ($p(t)_{t})^+_+ \geq p(t+1)$, for every $\varrho_{p(z)}-1\leq t\leq r-2$. For example, for the polynomial $p(z)=2z^3-6z^2+29z-20$ (see Example \ref{example m=7}) we find $r=218498$ and $\varrho_{p(z)}=\bar\varrho_{p(z)}=2$. Anyway, Proposition \ref{prop:varrho}\emph{(\ref{it:varrho_ii})} will help to compute $\varrho_{p(z)}$ and $\bar\varrho_{p(z)}$ in an efficient way, by induction on the degree of $p(z)$, as the following proposition shows using the features of minimal functions.

\begin{proposition}\label{prop:calcolo bar varrho}
If $\deg p(z) > 0$, $\bar\varrho_{p(z)}=\sigma-1$, with 
$\sigma:=\min\big\{t\geq \max\{\varrho_{\Delta p(z)},1\} : \sum f^t_{\Delta p(z)}(t-1) \leq  p(t-1) \big\}$. 
\end{proposition}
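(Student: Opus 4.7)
The plan is to establish $\bar\varrho_{p(z)} = \sigma - 1$ by proving both inequalities $\sigma - 1 \leq \bar\varrho_{p(z)}$ and $\sigma - 1 \geq \bar\varrho_{p(z)}$, exploiting throughout the combinatorial features of the minimal functions developed in Section \ref{sec:minimalFunctions}.

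For the inequality $\sigma - 1 \leq \bar\varrho_{p(z)}$, I would invoke Theorem \ref{th:first derivative}\emph{(\ref{it:th_first derivative_i})}, which guarantees that $f := f^{\bar\varrho_{p(z)}}_{p(z)}$ belongs to $F(p(z),\bar\varrho_{p(z)})$. Its first derivative $g := \Delta f$ is then admissible with Hilbert polynomial $\Delta p(z)$ and regularity $\bar\varrho_{p(z)} + 1$, and this value is at least $\max\{\varrho_{\Delta p(z)},1\}$ by Proposition \ref{prop:varrho}\emph{(\ref{it:varrho_i})}. The minimality property in Proposition \ref{prop:minimalfunction} yields $f^{\bar\varrho_{p(z)}+1}_{\Delta p(z)} \preceq g$, and integrating via Remark \ref{derivata-integrale} gives $\sum f^{\bar\varrho_{p(z)}+1}_{\Delta p(z)} \preceq f$. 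Evaluating at the degree $\bar\varrho_{p(z)}$ produces $\sum f^{\bar\varrho_{p(z)}+1}_{\Delta p(z)}(\bar\varrho_{p(z)}) \leq p(\bar\varrho_{p(z)})$, which witnesses that $t = \bar\varrho_{p(z)} + 1$ satisfies the condition defining $\sigma$, so $\sigma \leq \bar\varrho_{p(z)} + 1$.

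For the reverse inequality, the aim is to exhibit an element of $F(p(z), \sigma - 1)$, which forces $\bar\varrho_{p(z)} \leq \sigma - 1$. Writing $F := f^\sigma_{\Delta p(z)}$ and $c := p(\sigma - 1) - \sum F(\sigma - 1) \geq 0$, the natural candidate is
$$h(s) := \begin{cases} p(s), & s \geq \sigma - 1, \\ \sum F(s), & s < \sigma - 1. \end{cases}$$
Then $h$ has Hilbert polynomial $p(z)$ and regularity at most $\sigma - 1$, and a direct check shows $\Delta h$ coincides with $F$ in every degree except at $\sigma - 1$, where $\Delta h(\sigma - 1) = F(\sigma - 1) + c$. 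Admissibility of $h$, together with admissibility of $\Delta h$ away from the critical degree, follows from admissibility of $F$ and the identity $F(s) = \Delta p(s)$ for $s \geq \sigma$. The boundary regime $\sigma = \max\{\varrho_{\Delta p(z)},1\}$ is absorbed by Proposition \ref{prop:varrho}\emph{(\ref{it:varrho_i})}, which already forces $\bar\varrho_{p(z)} \geq \varrho_{\Delta p(z)} - 1 = \sigma - 1$ in that range.

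The main obstacle will be the admissibility of $\Delta h$ at the jump from degree $\sigma - 2$ to $\sigma - 1$, which amounts to the inequality $(F(\sigma - 2)_{\sigma - 2})^+_+ \geq F(\sigma - 1) + c$. The $+c$ correction is not free, and the naive bound coming from admissibility of $F$ yields only $(F(\sigma - 2)_{\sigma - 2})^+_+ \geq F(\sigma - 1)$. Here the minimality of $\sigma$ must be brought to bear: for every admissible $t$ with $t < \sigma$ one has $\sum f^t_{\Delta p(z)}(t - 1) > p(t - 1)$, and translating the failure of this condition at $t = \sigma - 1$ into quantitative control of the binomial expansion of $F(\sigma - 1)$, through the identities \eqref{doppia azione} and \eqref{disuguaglianze robbiano}, should deliver exactly the slack needed to accommodate $c$ and close the argument.
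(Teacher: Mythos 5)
Your derivation of $\sigma\leq\bar\varrho_{p(z)}+1$ (via $f^{\bar\varrho_{p(z)}+1}_{\Delta p(z)}\preceq\Delta f^{\bar\varrho_{p(z)}}_{p(z)}$, Remark \ref{derivata-integrale}, and evaluation at $\bar\varrho_{p(z)}$) is correct and coincides with the paper's argument. The other inequality, however, has a genuine gap, and it is not just an unfinished computation: the inequality you isolate as the remaining obstacle, $(F(\sigma-2)_{\sigma-2})^+_+\geq F(\sigma-1)+c$ with $F=f^{\sigma}_{\Delta p(z)}$, is false in general, so your grafted candidate $h$ need not lie in $F(p(z),\sigma-1)$. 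Take $p(z)=5z-3$ (Remark \ref{varrho'}(3)): then $\Delta p(z)=5$, $\varrho_{\Delta p(z)}=1$, $F=f^4_5=(1,2,3,4,5,5,\ldots)$, $\sum F=(1,3,6,10,15,\ldots)$, and $\sigma=4$ because $\sum f^3_5(2)=8>p(2)=7$ while $\sum F(3)=10\leq p(3)=12$; here $c=2$ and your candidate is $h=(1,3,6,12,17,\ldots)$. This $h$ is not even an O-sequence, since $(h(2)_2)^+_+=(6_2)^+_+=\binom{5}{3}=10<12=h(3)$, and its first derivative $\Delta h=(1,2,3,6,5,\ldots)$ violates Macaulay growth at the critical degree, since $(3_2)^+_+=4<6=F(3)+c$. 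So the minimality of $\sigma$ cannot supply the missing slack: the correction $c$ has to be distributed over \emph{all} degrees below $\sigma-1$ (which is what the top-down recursion defining $f^{\sigma-1}_{p(z)}$ does), not concentrated in the single degree $\sigma-1$ as in your graft. Note also that your remark about the boundary regime $\sigma=\max\{\varrho_{\Delta p(z)},1\}$ points the wrong way: Proposition \ref{prop:varrho}\emph{(\ref{it:varrho_i})} gives the lower bound $\bar\varrho_{p(z)}\geq\varrho_{\Delta p(z)}-1$, i.e.\ the inequality you already have, not the upper bound $\bar\varrho_{p(z)}\leq\sigma-1$ that this half requires.

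The paper sidesteps the obstruction by not modifying $\sum F$ at all. Since $F=f^{\sigma}_{\Delta p(z)}$ is admissible (using $f^0_{\Delta p(z)}=f^1_{\Delta p(z)}$ when $\varrho_{\Delta p(z)}=0$), the function $\sum F$ is the Hilbert function of a scheme; by the defining property of $\sigma$ its Hilbert polynomial is $p(z)-c$ with $c=p(\sigma-1)-\sum F(\sigma-1)\geq 0$ and its regularity is at most $\sigma-1$, so $\bar\varrho_{p(z)-c}\leq\sigma-1$. Then the monotonicity of $\bar\varrho$ under adding a constant to the Hilbert polynomial, Proposition \ref{prop:varrho}\emph{(\ref{it:varrho_ii})}, yields $\bar\varrho_{p(z)}=\bar\varrho_{(p(z)-c)+c}\leq\bar\varrho_{p(z)-c}\leq\sigma-1$. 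Replacing your graft-and-correct step by this argument (and keeping your first half unchanged) closes the proof; as it stands, your second half does not.
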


\begin{proof}
If $\varrho_{\Delta p(z)}=0$, then we can observe that $f^0_{\Delta p(z)}=f^1_{\Delta p(z)}$. Thus, in any case the Hilbert function $\sum f^\sigma_{\Delta p(z)}$ belongs to $F(p(z)-c,\sigma-1)$ for some non-negative integer $c$, by construction and according to Remark \ref{derivata-integrale}. Then, by Proposition \ref{prop:varrho}\emph{(\ref{it:varrho_ii})}, we have $\varrho_{p(z)}\leq \bar\varrho_{p(z)} \leq \bar\varrho_{p(z)-c} \leq \sigma-1$.

Now, observe that $\Delta f^{\bar\varrho_{p(z)}}_{p(z)}$ is an admissible function of regularity $\bar\varrho_{p(z)}+1$ and Hilbert polynomial $\Delta p(z)$. Thus, we obtain $f^{\bar\varrho_{p(z)}+1}_{\Delta p(z)}\leq \Delta f^{\bar\varrho_{p(z)}}_{p(z)}$, hence $\sum f^{\bar\varrho_{p(z)}+1}_{\Delta p(z)}\leq f^{\bar\varrho_{p(z)}}_{p(z)}$ and $\bar\varrho_{p(z)}+1\geq \sigma$.
\end{proof}

\begin{remark}\label{rk:calcolo varrho}
Proposition \ref{prop:calcolo bar varrho} gives a closed formula for $\bar\varrho_{p(z)}$ and also the opportunity to compute efficiently $\varrho_{p(z)}$. Indeed, as $\bar\varrho_{p(z)} = \max\{\varrho_{p(z)},\varrho_{\Delta p(z)}-1\}$, if $\sigma > \varrho_{\Delta p(z)}$ then $\varrho_{p(z)} = \bar\varrho_{p(z)} = \sigma-1$. Otherwise we have to check that $p(z)$ is admissible in the degrees $t\leq \sigma-2$, instead of $t\leq r-2$ as in the definition of $\varrho_{p(z)}$ of Proposition \ref{prop:varrho} (see Algorithms \ref{alg:barvarrho} and \ref{alg:varrho}).
\end{remark}


\section{Borel ideals and growth-height-lexicographic Borel sets}

The {\em Borel order} $<_B$ is the partial order on $\mathbb T$ for which, given two terms $x^\alpha$ and $x^\beta$, we have $x^\alpha <_B x^\beta$
 if there is a finite sequence of terms $\tau_1=x^\alpha,\tau_2,\ldots, \tau_k=x^\beta$ such that $\tau_h=\frac{x_i}{x_j}\tau_{h-1}$ with $j<i$,
 for every $1<h\leq k$. 

A set $B\subset \mathbb T_t$ is called a {\em Borel set} if, for every term $x^{\alpha}$ in $B$ and $x^{\beta}$ in $\mathbb T_t$, $x^{\alpha} <_B x^{\beta}$ implies that $x^{\beta}$ belongs to $B$.
From the definition it follows immediately that, if $B\subset \mathbb T_t$ is a Borel set, then the set $N:=\mathbb T_t\setminus B$ has the property that, for every $x^{\gamma}\in N$ and $x^{\delta}\in\mathbb T_t$ with $x^{\delta} <_B x^{\gamma}$,  $x^{\delta}$ belongs to $N$. 

A monomial ideal $J$ is {\em strongly stable} if $J_t$ is a Borel set, for each $t$. Every lex-segment ideal is a strongly stable ideal. 

A strongly stable ideal is always Borel-fixed (Borel, for short), i.e.~it is fixed under the action of the Borel subgroup of the upper-triangular invertible matrices. If $\ch(K) = 0$, also the vice versa holds. In $\ch(K)=0$ Galligo and in any characteristic Bayer and Stillman guarantee that in generic coordinates the initial ideal of an ideal $I$, with respect to (w.r.t.)~a given term order, is a constant Borel ideal, called the {\em generic initial ideal} of $I$. We denote by $\gin(I)$ the generic initial ideal of $I$ w.r.t.~the degrevlex order and recall that $\reg(I)=\reg(\gin(I))$ \cite{BS}.

\begin{proposition} \label{borel}
Let $J\subset S$ be a Borel ideal.
\begin{enumerate}[(i)]
\item The saturation $J^{\sat}$ of $J$ is the ideal generated by the terms minimal generators of $J$ in which we set the least variable equal to $1$ \cite[Proposition 2.9]{Gr}.
\item $reg(J)$ is the maximal degree of its minimal generators \cite[Proposition 2.9]{BS}.
\end{enumerate}
\end{proposition}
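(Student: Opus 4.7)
The plan is to deduce both statements from the combinatorics of Borel moves, namely the closure of $J$ under the replacements $x^\alpha \mapsto (x_i/x_j)\,x^\alpha$ whenever $j<i$ and $\alpha_j>0$.

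For part (i), I would first reduce saturation with respect to the irrelevant ideal $(x_0,\ldots,x_n)$ to saturation with respect to the single least variable $x_0$. Concretely, if $x_i^k f\in J$ for some $i>0$ and homogeneous $f$, a sequence of Borel moves transports every monomial in the support of $x_i^k f$ into one divisible by $x_0^k$, which forces $f\in (J:x_0^\infty)$; hence $J^{\sat}=(J:x_0^\infty)$. Next, for a monomial ideal, $(J:x_0^\infty)$ is the monomial ideal obtained by dividing each minimal generator $x^\alpha$ of $J$ by its $x_0$-part $x_0^{\alpha_0}$, which is precisely ``setting the least variable equal to $1$''. Both inclusions are then formal: each divided generator lies in $J^{\sat}$ since multiplication by $x_0^{\alpha_0}$ returns it to $J$; conversely, every minimal generator of $(J:x_0^\infty)$ arises in this way.

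For part (ii), I would invoke the Eliahou--Kervaire resolution of a strongly stable monomial ideal, which provides an explicit minimal graded free resolution of $J$ whose $i$-th syzygies are indexed by pairs $(x^\alpha,\{x_{j_1},\ldots,x_{j_i}\})$, with $x^\alpha$ a minimal generator of $J$, $j_1<\cdots<j_i$, and $x_{j_i}$ strictly below the maximal variable dividing $x^\alpha$; each such syzygy has degree $\deg(x^\alpha)+i$. Since $\reg(J)$ is the least $m$ such that every $i$-th syzygy is generated in degree $\leq m+i$, this equality of bidegrees forces $\reg(J)=\max_\alpha\deg(x^\alpha)$.

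The main obstacle is the Borel-to-$x_0$ reduction underlying part (i): one must verify that closure under the single-step exchanges $x_j\to x_i$ (with $j<i$) truly forces $(J:x_i^\infty)\subseteq (J:x_0^\infty)$ for every $i>0$. I would handle this by induction on $i$: a single Borel move converts a witness ``$x_i^k f\in J$'' into an analogous witness ``$x_{i-1}^{k'}f'\in J$'' with $f'$ obtained from $f$ by the same exchange, so that the index of the saturating variable can be lowered step by step down to $0$.
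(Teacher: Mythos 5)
The paper gives no proof of this proposition---it only cites Green and Bayer--Stillman---so the question is whether your argument stands on its own. Part (ii) does: the Eliahou--Kervaire resolution of a strongly stable ideal is minimal, with $i$-th syzygies concentrated in degrees $\deg(x^\alpha)+i$ as $x^\alpha$ ranges over the minimal generators, which immediately gives $\reg(J)=\max_\alpha\deg(x^\alpha)$. That is a legitimate (and standard) alternative to the Bayer--Stillman criterion cited in the paper.

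Part (i), however, contains a directional error that breaks the argument. With the paper's conventions ($x_0<x_1<\dots<x_n$, and $J_t$ closed under the moves $x^\alpha\mapsto (x_i/x_j)x^\alpha$ with $j<i$), a strongly stable ideal is closed only under replacing a \emph{smaller} variable by a \emph{larger} one. Your induction step converts a witness $x_i^k f\in J$ into one for $x_{i-1}$; that requires the exchange $x_i\mapsto x_{i-1}$, which is not among the permitted moves, so membership in $J$ is not preserved---indeed no sequence of permitted moves can ever introduce the least variable $x_0$ into a monomial. Moreover, even if this step worked, it would only establish $(J:x_i^\infty)\subseteq (J:x_0^\infty)$, hence $J^{\sat}=\bigcap_i(J:x_i^\infty)\subseteq (J:x_0^\infty)$, which is automatic from the definition of saturation. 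The content lies in the reverse inclusion $(J:x_0^\infty)\subseteq (J:x_i^\infty)$ for $i>0$, and there the Borel moves run in exactly the right direction: if $x_0^k m\in J$ for a monomial $m$, applying $(x_i/x_0)$ repeatedly gives $x_0^{k-1}x_i m\in J$, then $x_0^{k-2}x_i^2 m\in J$, and finally $x_i^k m\in J$. With this reversal made, the remainder of your part (i)---the description of $J:x_0^\infty$ for a monomial ideal as generated by the minimal generators with their $x_0$-parts removed---is correct and completes the proof.
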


Now, we recall the notion of growth-height-lexicographic Borel set which has been introduced by D. Mall \cite{Mall1,Mall2}. 

\begin{definition}[{\cite[Definition 2.7]{Mall1}, \cite[Definition 2.7]{Mall2}}]
Let $B\subset \mathbb T_t$ be a Borel set. The set $B^{(i)}=\{\tau\in B : \min(\tau)=i\}$ is the {\em growth class} of $B$ of {\em growth} $i$.
The {\em growth-vector} of $B$ is $gv(B):=(\vert B^{(0)}\vert,\vert B^{(1)}\vert\ldots,\vert B^{(n)}\vert)$.
\end{definition}

\begin{definition}[{\cite[Def. 2.13]{Mall1}, \cite[Def. 2.8]{Mall2}}]
Let $B\subset\mathbb T_t$ be a Borel set. For every $i\in \{0,\ldots,t\}$, let $B(i):=\{x_0^{\alpha_0} x_1^{\alpha_1}\ldots x_n^{\alpha_n}\in B: \alpha_0=i\}$.
The {\em height-vector} of $B$ is the vector $hv(B):=(\vert B(0)\vert,\vert B(1)\vert,\ldots,$ $\vert B(t)\vert)$.
\end{definition}

Given a Borel set $B$, the {\em first expansion} of $B$ is $\{x_0,\ldots,x_n\}\cdot B$ 
(see \cite{Ma,MR,MR2} for a description of this topic). For a strongly 
stable ideal $J$, we call $\mathbb T_{j+1}\setminus (\{x_0,\ldots,x_n\}\cdot J_j)$ the {\em first expansion} of $\mathcal N(J)_j$.

\begin{remark}
Given a Borel set $B\subset\mathbb T_t$, we have:  for every $i\neq j$, $B^{(i)}\cap B^{(j)}=\emptyset$  and  $B{(i)}\cap B{(j)}=\emptyset$; $B(0) =\bigcup_{i\geq 1}B^{(i)}$; $B^{(0)} =\bigcup_{i\geq 1}B{(i)}$.
\end{remark}

\begin{proposition}[{\cite[Proposition 3.2]{Mall1}}] \label{height-vector}
Given a Borel set $B\subset\mathbb T_t$, let $I=(B)^{\sat}\subset S$ be the saturation of the homogeneous ideal generated by $B$. 
If $f$ is the Hilbert function of $S/I$, then:
\begin{enumerate}[(i)]
\item\label{it:height-vector_i} $f(j)=\binom{j+n}{n}-\sum_{i=t-j}^t \vert B(i)\vert$, for every $j\leq t$;
\item\label{it:height-vector_ii} $f(j)-f(j-1)=\binom{j+n-1}{n-1}-\vert B(t-j)\vert $  for every $j\leq t$;
\item\label{it:height-vector_iii} $f(t+k)=\binom{t+k+n}{n}-\sum \binom{i+k}{k} \vert B^{(i)}\vert$, for every $k\geq 1$.
\end{enumerate} 
\end{proposition}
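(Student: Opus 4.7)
Parts (i) and (ii) rest on identifying $I_j \cap \mathbb T_j$ for $j \leq t$ with $\bigsqcup_{i=t-j}^{t} B(i)$ via the map $\tau \mapsto x_0^{t-j}\tau$. Since $I=(B)^{\sat}$ is Borel and saturated, $\tau \in I$ iff $x_0^{s}\tau \in (B)$ for some $s \geq 0$, which amounts to saying that the $x_0$-free part $\tau'$ of $\tau$ is divisible by the $x_0$-free part $\sigma'$ of some $\sigma \in B$. A Borel straightening argument then shows that $\sigma$ can always be chosen to be $x_0^{t-j}\tau$ itself: writing $\sigma = x_0^{\beta_0}\sigma'$ and $\tau = x_0^{\alpha_0}\tau'$, one iteratively replaces $\sigma$ by $(x_i/x_0)\sigma$ for some $x_i$ with $i \geq 1$ dividing the current quotient $\tau'/\sigma'$, staying inside $B$ by the Borel property. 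The bookkeeping is exact: the degree of $\tau'/\sigma'$ equals $\beta_0-(t-j+\alpha_0)$, precisely the number of moves needed to reach $\beta_0 = t-j+\alpha_0$, i.e.~to land on $\tilde\sigma = x_0^{t-j}\tau \in B$. The resulting bijection proves (i); part (ii) follows by subtracting the formulas of (i) in degrees $j-1$ and $j$ and invoking the Pascal identity $\binom{j+n}{n}-\binom{j+n-1}{n}=\binom{j+n-1}{n-1}$.

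For (iii), the same straightening technique shows that $I_{t+k} = (B)_{t+k}$ for every $k \geq 0$: given $\tau \in I_{t+k}$ and $\sigma \in B$ with $\sigma' \mid \tau'$, the quotient $\tau'/\sigma'$ has degree $k+(\beta_0-\alpha_0) \geq \beta_0-\alpha_0$, so the same Borel moves now reduce the $x_0$-exponent of $\sigma$ down to $\alpha_0$, yielding $\tilde\sigma \in B$ with $\tilde\sigma \mid \tau$, that is, $\tau \in (B)_{t+k}$. Once this equality is in place, one applies the Eliahou--Kervaire decomposition to the stable ideal $(B)$: because $B \subset \mathbb T_t$ and $(B)$ has no generators in lower degree, the set $B$ coincides with the minimal generating system of $(B)$, and every $\tau \in (B)_{t+k}$ admits a unique writing $\tau = \mu\sigma$ with $\sigma \in B$ and $\mu$ a monomial of degree $k$ in $K[x_0,\ldots,x_{\min(\sigma)}]$. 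Partitioning $B$ as $\bigsqcup_i B^{(i)}$ and counting the $\binom{i+k}{k}$ available choices of $\mu$ for each $\sigma \in B^{(i)}$ produces $\dim I_{t+k} = \sum_i \binom{i+k}{k}\lvert B^{(i)}\rvert$, whence (iii).

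The main obstacle in both (i) and (iii) is the Borel straightening lemma that produces, for a given term $\tau$, a canonical divisor lying in $B$; everything else (the combinatorial counts, the Pascal identity, and the invocation of Eliahou--Kervaire for stable ideals) is routine. The contrast between the two regimes is captured by the sign of $d-t$: in degrees $d = j \leq t$ the forced $x_0$-exponent of the straightening target is exactly $t-j$, producing the $B(i)$-flavoured formula (i), while in degrees $d = t+k \geq t$ there is no forced $x_0$-exponent and the straightening reduces all the way to a direct divisor in $B$, producing the $B^{(i)}$-flavoured formula (iii) via the Eliahou--Kervaire decomposition.
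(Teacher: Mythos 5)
Your argument is correct, but note that the paper offers no proof of this statement to compare against: it is quoted verbatim from Mall (Proposition 3.2 of the cited paper), so the only benchmark is Mall's original counting argument, and what you wrote is essentially that standard proof. Your identification of $I_j\cap\mathbb T_j$ with $\bigsqcup_{i=t-j}^{t}B(i)$ via $\tau\mapsto x_0^{t-j}\tau$, justified by the Borel straightening of a divisor $\sigma\in B$ (the exponent bookkeeping $\deg(\tau'/\sigma')=\beta_0-(t-j+\alpha_0)$ is exactly right), gives (i), and (ii) is indeed just Pascal; for (iii), the equality $I_{t+k}=(B)_{t+k}$ by the same straightening, plus the Eliahou--Kervaire decomposition of the stable ideal $(B)$ (whose minimal generators are precisely $B$, all in degree $t$), yields $\dim_K I_{t+k}=\sum_i\binom{i+k}{k}\vert B^{(i)}\vert$. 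Two small points you assert without proof are standard and consistent with the paper's conventions: that saturating $(B)$ with respect to $x_0$ alone computes $(B)^{\sat}$ is exactly the content of Proposition \ref{borel}(i), and your Eliahou--Kervaire statement (unique writing $\tau=\mu\sigma$ with $\mu\in K[x_0,\ldots,x_{\min(\sigma)}]$) is the correct form for the ordering $x_0<\cdots<x_n$ used here, since $\sigma$ is forced to be the product of the $t$ largest variables of $\tau$.
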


\begin{definition}[{\cite[Def. 2.10 and 2.14]{Mall1}, \cite[Def. 4.3]{Mall2}}]\label{DefMall} 
\phantom f \ 

(1) \label{it:DefMall_i} The Borel set $B\subset\mathbb T_t$ is {\em growth-lexicographic} if all growth classes of $B$ are lex-segments in the corresponding growth classes $\mathbb T_t^{(i)}$ of $\mathbb T_t$.

(2) \label{it:DefMall_ii} The Borel set $B\subset\mathbb T_t$ is {\em growth-height-lexicographic} if $B(0)$ is growth-lexicographic and $B(i)$ is a lex-segment in $\mathbb T_t(i)$, for all $0<i\leq t$. 
\end{definition}

\begin{theorem}\label{th:crucial} 
Let $B\subset\mathbb T_t$ be a Borel set. Then there is a unique growth-height-lexicographic Borel set 
$L_{gh}(B)\subset \mathbb T_t$ with $gv(L_{gh}(B))=gv(B)$ and with $hv(L_{gh}(B))=hv(B)$.
Moreover, the ideals $J:=(B)^{\sat}$ and $I:=(L_{gh}(B))^{\sat}$  have the same Hilbert function and $\reg(J)\leq \reg(I) \leq t$.
\end{theorem}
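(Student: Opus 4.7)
The plan is to prove the theorem in four stages -- uniqueness, existence, equality of Hilbert functions, and the double regularity bound -- expecting the inequality $\reg(J) \leq \reg(I)$ to be the main obstacle.

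For uniqueness, I would observe that in a growth-height-lexicographic Borel set $L \subset \mathbb T_t$, the growth-lex condition on $L(0)$ forces each $L^{(i)}$ for $i \geq 1$ to be the lex-segment of cardinality $|L^{(i)}|$ inside $\mathbb T_t^{(i)}$; since $L(0) = \bigcup_{i \geq 1} L^{(i)}$, prescribing the growth vector thus determines $L(0)$ completely. The height vector together with the lex-segment requirement then determines each $L(i)$ for $i > 0$. Hence any growth-height-lex set sharing the data $(gv(B), hv(B))$ is unique. For existence, I would define $L_{gh}(B)$ by filling in precisely these lex-segments of prescribed cardinality and verify directly that the result is a Borel set: the check reduces to showing that every elementary Borel move $\tau \mapsto (x_j/x_i)\tau$ with $i<j$ stays inside $L_{gh}(B)$, using compatibility of lex-segments across the relevant growth and height strata.

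Equality of Hilbert functions is then immediate from Proposition \ref{height-vector}: all three formulas there depend only on $gv$ and $hv$, which agree for $B$ and $L_{gh}(B)$. The bound $\reg(I) \leq t$ follows because $L_{gh}(B)$ is Borel, hence so are the ideal it generates and its saturation $I$; by Proposition \ref{borel}(i) the minimal generators of $I$ arise from elements of $L_{gh}(B)$ (all of degree $t$) by dividing out the maximal power of the least variable, so they live in degrees at most $t$, and Proposition \ref{borel}(ii) concludes.

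The hard part will be the comparison $\reg(J) \leq \reg(I)$. Both regularities equal the maximum degree of a minimal generator of the respective saturation (Proposition \ref{borel}(ii)). The candidate degrees are $t$ (from elements of $B(0)$, whose $x_0$-exponent is zero) and $t - i$ (from elements of $B(i)$, $i \geq 1$, after removing the factor $x_0^i$), with multiplicities governed by $hv(B)$. Which of them actually survive as minimal generators depends on divisibility among the post-saturation terms, which is sensitive to the internal arrangement within each stratum and not just the cardinalities. My plan is to show that the growth-height-lex arrangement is extremal in this respect: because lex-segments within each $\mathbb T_t^{(i)}$ and $\mathbb T_t(i)$ collect the terms whose exponents are pushed towards the higher variables $x_1,\dots,x_n$ in the most concentrated way, the post-saturation generators of $I$ are least likely to divide one another, so more of them remain minimal and in particular the top-degree ones survive. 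Making this heuristic precise by a careful tracking of divisibility across the growth classes $B^{(i)}$, and showing that the maximal surviving degree for $L_{gh}(B)$ dominates that for any other Borel set sharing the same growth and height data, is the combinatorial heart of the proof and I expect it to constitute the principal technical effort.
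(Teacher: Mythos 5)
The first three stages of your plan are sound and match the paper: uniqueness and existence of $L_{gh}(B)$ amount to the Mall-type construction by strata-wise lex-segments (the paper simply cites Mall for this), the equality of Hilbert functions is indeed immediate from Proposition \ref{height-vector} since its formulas depend only on $gv$ and $hv$, and $\reg(I)\leq t$ follows from Proposition \ref{borel} exactly as you say. The genuine gap is the inequality $\reg(J)\leq\reg(I)$, which you explicitly leave as a heuristic (``least likely to divide one another'', ``I expect it to constitute the principal technical effort''). As it stands this is a plan, not a proof, and the proposed route --- tracking divisibility among post-saturation generators stratum by stratum and arguing that the lex arrangement makes ``more of them remain minimal and in particular the top-degree ones survive'' --- is both unproved and, taken literally, false: for $B=x_n\cdot\mathbb T_{t-1}$ one has $L_{gh}(B)=B$ and $(B)^{\sat}=(x_n)$, so no generator of degree $t$ (or of any degree $>1$) survives, and $\reg(I)=1$. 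So the extremality you need cannot be phrased as ``top-degree generators of $I$ survive''; it must be the comparative statement $\reg(J)\le\reg(I)$, and you give no mechanism that would actually yield it.

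The missing idea, which is how the paper settles this in a few lines, is to avoid tracking divisibility altogether and instead use Macaulay-type minimal growth of lex-segments applied to the $x_0$-free slices. Concretely: a minimal generator of the saturated Borel ideal $J$ is $x_0$-free, and for $h\le t$ the $x_0$-free terms of $J$ (resp.\ $I$) in degree $h$ are $B(t-h)\cdot x_0^{-(t-h)}$ (resp.\ $L(t-h)\cdot x_0^{-(t-h)}$); these slices have equal cardinalities because $hv(B)=hv(L_{gh}(B))$, and the slices of $I$ are lex-segments in $K[x_1,\dots,x_n]$ by the height-lex condition. If $J$ had a minimal generator in a degree $m$ with $\reg(I)<m\le t$, then the first expansion inside $K[x_1,\dots,x_n]$ of the degree-$(m-1)$ slice of $J$ would miss a term of the degree-$m$ slice of $J$, while every term of the degree-$m$ slice of $I$ lies in the expansion of its degree-$(m-1)$ slice (no minimal generators of $I$ in degree $m$); since the two degree-$m$ slices have the same size, this forces the expansion of the non-lex slice to be strictly smaller than that of the lex-segment slice of the same cardinality, contradicting the minimality of the first expansion of a lex-segment. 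Without this (or an equivalent) growth argument, your ``combinatorial heart'' remains an open step, so the proposal does not yet prove the theorem.
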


\begin{proof} 
For the first part of this result we refer to \cite[Th. 2.17]{Mall1} and \cite[Theorem 4.4]{Mall2} to obtain that $L_{gh}(B)$ is the Borel set $L$ that consists of the union of the lex-segments $L^{(i)}$ in $\mathbb T_t^{(i)}$ with $\vert B^{(i)}\vert$ elements, for every $i=0,\dots,n$, and of the lex-segments $L(j)$ in $\mathbb T_t{(j)}$ with $\vert B(j)\vert$ elements, for every $j=1\dots,t$.  

For the second part, first we note that $H_{S/J}=H_{S/I}$ follows straightforwardly from Proposition \ref{height-vector}. Then, we observe that $\reg(I)\leq t$, because $I$ is generated by terms of degree $\leq t$ and is a strongly stable ideal. It remains to show that $\reg(J)\leq \reg(I)$.

Assume there are an integer $m>\reg(I)$ and a minimal generator of $J$ of degree $m$. Observe that $m \leq t$. Thus, every term of $I\cap K[x_1,\ldots,x_n]_m$ is multiple of a term of the set $L'$ of terms in $I\cap K[x_1,\ldots,x_n]_{m-1}$, while there are terms of $J\cap K[x_1,\ldots,x_n]_m$ that are not multiple of a term in the set $B'$ of the terms in $J\cap K[x_1,\ldots,x_n]_{m-1}$. Observe that the set of terms in $I\cap K[x_1,\ldots,x_n]_{h}$ is $L(t-h)\cdot x_0^{-t+h}$ and the set of terms in $J\cap K[x_1,\ldots,x_n]_{h}$ is $B(t-h)\cdot x_0^{-t+h}$, for every $h\leq t$. Moreover, $\vert L'\vert= \vert  L(t-m+1) \vert=\vert B(t-m+1) \vert=\vert B'\vert$ and $L'$ is a lex-segment. Then, we should have $\dim_K (B'\cdot R)_m< \dim_K (L'\cdot K[x_1,\ldots,x_n])_m$, but this is a contradiction with the fact that the first expansion of an ideal generated by a lex-segment is the smallest possible.  
\end{proof}

\section{Ideal graft and minimal Castelnuovo-Mumford regularities}

In this section, first we exploit the features of the growth-height-lexicographic Borel sets to construct a scheme $X$ obtained by a so-called \emph{\innesto} of two given schemes $X_1$ and $X_2$. Indeed, the Hilbert function of $X$ will be the ``graft" of the other two, because it will coincide with the Hilbert function of $X_1$ up to a certain degree and with the Hilbert function of $X_2$ from this degree on. 
Then, we apply this new construction to find a scheme $X$ such that the regularity $\varrho_X$ and the value $H_X(\varrho_X-1)$ are small, according to Lemma \ref{CMR} and Proposition \ref{newreg}, respectively. In conclusion, we prove that each $m^\varrho_{p(z)}$ is achieved by a scheme with the minimal function having regularity $\varrho$.

\begin{remark}\label{rem:immersione}
If $I\subset K[x_0, \dots, x_{n'}]$ is a strongly stable ideal, then for every integer $n>n'$ the ideal $(I + (x_{n'+1}, \dots, x_{n})) \subseteq K[x_0,\ldots,x_n]$ is a strongly stable ideal.
\end{remark}

\begin{theorem}[Ideal graft]\label{innesto}
Let $q,w$ be Hilbert functions of projective schemes and suppose $m>1$ is an integer such that $w(m-1)=q(m-1)$ and $w(m-2)\leq q(m-2)$ (so that $\Delta w(m-1)\geq \Delta q(m-1)$). Then, the function 
\[ 
h(j):=
\left\{\begin{array}{cc} w(j) & \hbox{ for all } j<m \\
q(j) & \hbox{ for all } j\geq m 
\end{array}\right. 
\] 
is the Hilbert function of a projective scheme and $m_h\leq \max\{m,m_q\}$. 
\end{theorem}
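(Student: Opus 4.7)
I will build a saturated Borel ideal $\bar J\subset S=K[x_0,\dots,x_n]$ with Hilbert function $h$ and $\reg(\bar J)\leq t:=\max\{m,m_q\}$; the engine is Mall's growth-height-lexicographic machinery of Section~4. Existence of such a $\bar J$ will immediately give $m_h\leq t$, since the saturated defining ideal of $\Proj S/\bar J$ has Castelnuovo-Mumford regularity equal to $\reg(\bar J)$.

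First I fix a saturated Borel ideal $J_q$ realizing $q$ with $\reg(J_q)=m_q$ (using Remark~\ref{rem:immersione} to embed everything in a common ring if necessary) and consider the Borel set $(J_q)_t\subset\T_t$. By Proposition~\ref{height-vector}, its growth vector $gv((J_q)_t)$ encodes the polynomial values $p_q(t+k)=p_h(t+k)$ for $k\geq 1$, and its height vector encodes $q(j)$ for $j\leq t$. I then prescribe, for a target Borel set $B\subset\T_t$, the same growth vector (forced since $p_h=p_q$) and the ``grafted'' height vector
\[
hv'_i=\begin{cases} |(J_q)_t(i)|, & \text{if }i\leq t-m,\\ \binom{t-i+n-1}{n-1}-\Delta w(t-i), & \text{if }i>t-m.\end{cases}
\]
A direct check using Proposition~\ref{height-vector}(i)--(ii), together with the telescoping identity $\sum_{j'=0}^{m-1}\Delta q(j')=q(m-1)=w(m-1)=\sum_{j'=0}^{m-1}\Delta w(j')$ (which is exactly the hypothesis $w(m-1)=q(m-1)$), confirms that the Hilbert function of $(B)^{\sat}$ would then be $q(j)$ for $j\geq m$ and $w(j)$ for $j<m$, i.e.\ exactly $h$.

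The real work is to realize $B$ honestly as the unique growth-height-lexicographic Borel set with these invariants (Definition~\ref{DefMall} and Theorem~\ref{th:crucial}). Inside the ``$q$-block'' ($i\leq t-m$) Borel compatibility between consecutive columns $B(i)$ and $B(i-1)$ is inherited from $(J_q)_t$; inside the ``$w$-block'' ($i>t-m$), it is inherited from admissibility of $w$ combined with the lex-segment structure. The only truly new compatibility check sits at the transition index $i^{*}:=t-m+1$, where the shrunken lex-segment $B(i^{*})$ must sit, in the Borel order, inside the unchanged lex-segment $B(i^{*}-1)=(J_q)_t(t-m)$ below it. The numerical condition this reduces to is $hv'_{i^{*}}\leq hv((J_q)_t)_{i^{*}}$, i.e.\ $\Delta w(m-1)\geq \Delta q(m-1)$, which is exactly the hypothesis $w(m-2)\leq q(m-2)$ combined with $w(m-1)=q(m-1)$.

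Once $B$ is produced, Proposition~\ref{height-vector} gives $H_{S/(B)^{\sat}}=h$, so $h$ is the Hilbert function of the projective scheme $\Proj S/(B)^{\sat}$, and Theorem~\ref{th:crucial} yields $\reg((B)^{\sat})\leq t=\max\{m,m_q\}$, proving $m_h\leq\max\{m,m_q\}$. The \emph{main obstacle} is precisely the assembly of $B$ as a growth-height-lex Borel set with ``hybrid'' invariants: Mall's framework is designed for a single Borel set, and splicing two Borel prescriptions requires the inequality $\Delta w(m-1)\geq\Delta q(m-1)$ as the unique numerical glue that allows the $w$-side lex-segments to dock onto the $q$-side lex-segments across degree $m-1$.
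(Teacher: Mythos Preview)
Your approach is correct and runs parallel to the paper's: both hinge on growth-height-lexicographic Borel sets and both isolate $\Delta w(m-1)\geq\Delta q(m-1)$ as the unique glue at the transition. The main difference is in how the $w$-side is handled. You prescribe the $w$-block of $B$ purely numerically (lex-segments of sizes read off from $\Delta w$) and then must verify Borel compatibility there from scratch; this needs admissibility of $\Delta w$, not merely of $w$---which does hold, since $w$ is by hypothesis the Hilbert function of a scheme, but you should say so. The paper instead introduces a \emph{second} growth-height-lex ideal $(L')^{\sat}$ realizing $w$ (via $\gin$ of a scheme with Hilbert function $w$, truncation to degree $\leq s$, then $L_{gh}$), and the graft is simply the ideal $J$ generated by $G'\cup G$ with $G'=(L')^{\sat}\cap\T_{\leq m-1}(0)$ and $G=(L)^{\sat}\cap\T_{\geq m}(0)$. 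Borel-ness of $J$ is then automatic (it is generated by Borel sets of terms not involving $x_0$), saturation is clear, and $H_{S/J}=h$ follows by a short induction using that $J$ and $(L)^{\sat}$ share the same terms in $\T_k(0)$ for $k\geq m$. This packaging avoids your hand-verification of the $w$-block. One caution on your write-up: Theorem~\ref{th:crucial} transforms an \emph{existing} Borel set into its growth-height-lex form---it does not manufacture a Borel set from prescribed growth and height vectors---so you cannot invoke it to ``realize $B$''; you must construct $B$ explicitly as the union of the specified lex-segments and check directly that it is Borel (your compatibility checks are the right outline for this). Finally, the regularity bound $\reg((B)^{\sat})\leq t$ is just Proposition~\ref{borel}(ii), not Theorem~\ref{th:crucial}.
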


\begin{proof} 
Let $s:=\max\{m, m_q \}$. By the hypotheses, there are a saturated polynomial ideal $I$, defining a projective scheme with Hilbert function $q$ and Castelnuovo-Mumford regularity $m_q$, and a saturated polynomial ideal $I'$, defining a projective scheme with Hilbert function $w$. Replacing $I$ and $I'$ by $\gin(I)$ and $\gin(I')$, respectively, we can suppose that they are Borel saturated. Moreover, by Remark \ref{rem:immersione}, we can suppose that the ideals $I$ and $I'$ are in the same polynomial ring $S=K[x_0,\ldots,x_n]$. 
Nevertheless, we can suppose $\reg(I')\leq s$, replacing $I'$ possibly by $(I'_{\leq s})$, which gives a Hilbert function equal to $w$ at least until the degree $s$. Hence, we let $L:=L_{gh}(I_s)$ and $L':=L_{gh}(I'_s)$ and we can replace again $I$ and $I'$ by $(L)^{\sat}$ and $(L')^{\sat}$, respectively.

For each saturated Borel ideal $V\subset S$ with $H_{S/V}=v$ and integer $k$, we get
$\Delta v(k)=v(k)-v(k-1)=\binom{k+n-1}{n-1}-\vert V\cap \mathbb T_k(0) \vert.$
Then in our hypotheses, we have $\vert I'\cap \mathbb T_{m-1}(0)\vert \leq \vert  I\cap \mathbb T_{m-1}(0)\vert$, where by construction
$I\cap \mathbb T_{m-1}(0) =x_0^{-(s-m+1)} L(s-m+1)$ and $ I'\cap \mathbb T_{m-1}(0) =x_0^{-(s-m+1)} L'(s-m+1)$.
Recall that $L(s-m+1)$ e $L'(s-m+1)$ are both lex-segments in $\mathbb T_s(s-m+1)$, hence $I'\cap \mathbb T_{m-1}(0)\subseteq I\cap \mathbb T_{m-1}(0)$.

Let $G':=I'\cap \mathbb T_{\leq m-1}(0)$ and $G:=I\cap \mathbb T_{\geq m}(0)$ and then consider the ideal $J$ that is generated by the terms of the set $G'\cup G$.
First, we observe that:

i) $J$ is Borel and saturated, because it is generated by a suitable union of Borel sets of terms in which the variable $x_0$ does not occur; 

ii) $\reg(J)\leq s$, because $J$ is a Borel ideal generated by $G'\cup G_{\leq s}$, by the hypotheses on $I$ and $I'$.

It remains to show that $h=H_{S/J}$. By construction, for every $k\leq m-1$ we have $J_k=I_k$, hence $h(k)=w(k)$. 
By induction on $k$, we show that $h(k)=q(k)$, for every $k\geq m-1$. For $k=m-1$, we have $h(m-1)=w(m-1)=q(m-1)$, by the hypotheses. Now, let $k\geq m$ and assume the thesis is  true for $k-1$. 
We have $h(k) -h(k-1)=q(k)-q(k-1)$ because $J$ and $I$ contain the same terms of $\mathbb T_k(0)$. Then, by the inductive hypothesis we obtain $h(k)=q(k)$.
\end{proof}


\begin{corollary} \label{cor:uso innesto} Let $\varrho$ be any integer such that $F(p(z),\varrho)\neq \emptyset$ and let $w \in F(p(z),\varrho)$. 
For every $q\in  F(p(z))$, we get $m_w\leq \max\{\varrho+2,m_q+1\}$.
If moreover $q\in F(p(z),\varrho)$, then:
\begin{enumerate}[(i)]
\item\label{it:uso innesto_i}   $ m_q<m_w \Longleftrightarrow  m_q=\varrho+1,   \ m_w =\varrho+2 \hbox { and } q(\varrho-1)<w(\varrho-1)$;
\item\label{it:uso innesto_ii} either  $\vert M(p(z),\varrho)\vert =1$ or $M(p(z), \varrho)= \{\varrho+1, \varrho+2\}$;
\item\label{it:uso innesto_iv}  $m^{\varrho}_{p(z)}\geq m_{f^\varrho_{p(z)}}$ and  $m^{\varrho}_{p(z)}= \begin{cases} m_{f^\varrho_{p(z)}}, &\text{ if } f^\varrho_{p(z)} \text{ has regularity } \varrho \\
m_{g^\varrho_{p(z)}}, &\text{ otherwise.} 
\end{cases}$ 
\end{enumerate}
\end{corollary}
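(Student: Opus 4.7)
The plan is to derive all four statements from repeated applications of the ideal graft (Theorem \ref{innesto}). Three uniform facts drive the argument: by Lemma \ref{CMR} every $u\in F(p(z),\varrho)$ satisfies $m_u \geq \varrho+1$; any Hilbert function $u\in F(p(z))$ coincides with $p(z)$ in all degrees $\geq\varrho_u$, so whenever the grafting degree $m$ exceeds the regularity of both functions being glued, the graft is a tautology on the high side; and by Theorem \ref{th:first derivative} together with Propositions \ref{prop:minimalfunction} and \ref{minimal paolo}, the family $F(p(z),\varrho)$ admits a $\preceq$-minimum $u_0$, equal to $f^{\varrho}_{p(z)}$ when this function already has regularity $\varrho$ and equal to $g^{\varrho}_{p(z)}$ otherwise.

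For the preliminary inequality $m_w\leq \max\{\varrho+2,m_q+1\}$, I would apply Theorem \ref{innesto} to $w$ (below degree $m$) and $q$ (from degree $m$ onward) with $m:=\max\{\varrho,\varrho_q\}+2$. In degrees $m-2$ and $m-1$ both $w$ and $q$ coincide with $p(z)$, so the hypotheses of the graft hold with equality and the resulting function is identically $w$. Theorem \ref{innesto} then yields $m_w\leq \max\{m,m_q\}$, and a split on $\varrho_q\leq\varrho$ (giving $m=\varrho+2$) versus $\varrho_q>\varrho$ (giving $m=\varrho_q+2\leq m_q+1$, via $m_q\geq \varrho_q+1$) converts this into the stated bound.

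Statement \emph{(i)} is the crux. Assuming $q\in F(p(z),\varrho)$ and $m_q<m_w$, I would first prove $q(\varrho-1)<w(\varrho-1)$ by contradiction: if $w(\varrho-1)\leq q(\varrho-1)$, then Theorem \ref{innesto} applied to the graft \emph{``$w$-low, $q$-high''} at $m:=\varrho+1$ is legitimate since $w(\varrho)=q(\varrho)=p(\varrho)$, and it produces $m_w\leq\max\{\varrho+1,m_q\}=m_q$, contradicting $m_w>m_q$. Next I would exclude $m_q\geq\varrho+2$: in that regime the same graft at $m:=\varrho+2$ applies unconditionally (both functions equal $p(z)$ at $\varrho$ and $\varrho+1$) and gives $m_w\leq\max\{\varrho+2,m_q\}=m_q$, again impossible. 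Hence $m_q=\varrho+1$, and inserting this in the preliminary inequality bounds $m_w$ by $\varrho+2$, while $m_w>m_q$ forces equality. Statement \emph{(ii)} is then immediate: any two distinct values of $M(p(z),\varrho)$ must be $\{\varrho+1,\varrho+2\}$ by \emph{(i)}, and a hypothetical third value would have to coincide with one of them.

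For \emph{(iii)}, using the $\preceq$-minimum $u_0$ of $F(p(z),\varrho)$, the relation $u_0\preceq w$ yields $u_0(\varrho-1)\leq w(\varrho-1)$, while $u_0(\varrho)=w(\varrho)=p(\varrho)$. Theorem \ref{innesto} then applies to graft $u_0$ (low) with $w$ (high) at $m:=\varrho+1$, producing a scheme with Hilbert function $u_0$ and regularity at most $\max\{\varrho+1,m_w\}=m_w$; taking the infimum over $w$ gives $m_{u_0}=m^{\varrho}_{p(z)}$. To obtain $m^{\varrho}_{p(z)}\geq m_{f^{\varrho}_{p(z)}}$ in the remaining case $u_0=g^{\varrho}_{p(z)}$ (i.e.\ $f^{\varrho}_{p(z)}=f^{\varrho'}_{p(z)}$ has regularity $\varrho'<\varrho$, by Remark \ref{varrho'}(3)), I would graft $f^{\varrho'}_{p(z)}$ (low) with $g^{\varrho}_{p(z)}$ (high) at $m=\varrho+1$: the hypotheses are met because both functions equal $p(\varrho)$ at $\varrho$ while $f^{\varrho'}_{p(z)}(\varrho-1)=p(\varrho-1)<p(\varrho-1)+1=g^{\varrho}_{p(z)}(\varrho-1)$, producing $m_{f^{\varrho}_{p(z)}}\leq m_{g^{\varrho}_{p(z)}}=m^{\varrho}_{p(z)}$. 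The main technical delicacy is locating the tightest valid grafting degree in each case: once $m=\varrho+1$ and $m=\varrho+2$ are pinned down as the correct ``small'' candidates, the hypothesis $w(m-2)\leq q(m-2)$ of Theorem \ref{innesto} reduces to comparisons at $\varrho-1$ that the hypotheses of each substatement directly control.
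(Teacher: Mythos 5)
Your proposal is correct and follows essentially the same route as the paper: repeated applications of the ideal graft (Theorem \ref{innesto}) at the degrees $m=\varrho+1$ and $m=\varrho+2$ (and $m=\max\{\varrho+2,m_q+1\}$ for the preliminary bound), combined with the identification of the $\preceq$-minimum of $F(p(z),\varrho)$ as $f^{\varrho}_{p(z)}$ or $g^{\varrho}_{p(z)}$ via Theorem \ref{th:first derivative}. Your only real addition is to make explicit, through the graft of $f^{\varrho'}_{p(z)}$ with $g^{\varrho}_{p(z)}$ at $m=\varrho+1$, the inequality $m^{\varrho}_{p(z)}\geq m_{f^{\varrho}_{p(z)}}$ in the case where $f^{\varrho}_{p(z)}$ has regularity $\varrho'<\varrho$, a step the paper leaves implicit.
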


\begin{proof}
For the first assertion, it is enough to apply Theorem \ref{innesto} to the functions $w$ and $q$ with $m=\max\{\varrho_w+2,m_q+1\}$, obtaining an ideal $J$ such that $H_{S/J}=w$ and so $m_w\leq \reg(J)\leq \max\{m,m_q\}=\max\{\varrho_w+2,m_q+1\}$. 

\emph{(\ref{it:uso innesto_i})} Assume $m_q < m_w$ and set $m:=\varrho+2$. Then, $q(k)=w(k)=p(k)$ for $k=m-2$ and $k= m-1$, and we can apply Theorem \ref{innesto} to construct a saturated ideal $J\subset S$ with $H_{S/J}=w$ and $\reg(J)\leq \max\{m,m_q\}$. Hence, $m_w\leq \reg(J)\leq \max\{\varrho+2,m_q\}$ and we obtain $m_q=\varrho+1$, $m_w =\varrho+2$. Moreover, if on the contrary $w(\varrho-1)\leq q(\varrho-1)$, then we can apply Theorem \ref{innesto} with $m=m_q=\varrho+1$ obtaining an ideal $J\subset S$ such that $H_{S/J}=w$ and $m_w\leq \max\{m,m_q\}= m_q$, against the assumption.

Statement \emph{(\ref{it:uso innesto_ii})} is a straightforward consequence of part \emph{(\ref{it:uso innesto_i})}. 

For \emph{(\ref{it:uso innesto_iv})}, by the previous results $m^{\varrho}_{p(z)}$ is attained by the minimal function in $F(p(z), \varrho)$, hence by either $f_{p(z)}^\varrho$ or $g_{p(z)}^\varrho$. Then, we apply Theorem \ref{th:first derivative}.
\end{proof}

The results of Corollary \ref{cor:uso innesto} confirm that the Hilbert function of a scheme $X$ with minimal Castelnuovo-Mumford regularity must either have minimal regularity or assume the smallest possible value at the degree $\varrho_X-1$. 

\begin{remark}\label{M}
If $\vert M(p(z),\varrho)\vert=1$, then the unique element $m$ of $ M(p(z),\varrho)$ can be bigger than $\varrho+2$ (see Example \ref{example m=7}).
\end{remark}



\begin{example}
For the admissible polynomial $p(z)=12z-25$ we have $\bar\varrho_{p(z)}=\varrho_{p(z)}=6$ and $f^7_{p(z)}=f^6_{p(z)}=(1,4,9,16,25,36,12z-25)$. So, we consider $g^7_{p(z)}=(1,4,9,16,25,36,48,$ $12z-25)$ whose first derivative $\Delta g^7_{p(z)}=(1,3,5,7,9,11,12,11,12)$ is admissible. Thus, $F(p(z),7)\not=\emptyset$ and there are schemes with Hilbert function having regularity $7$ and Hilbert polynomial $p(z)$. Moreover, we obtain $m^7_{p(z)}=m_{g^7_{p(z)}}=9$ and $m^6_{p(z)}=m_{f^6_{p(z)}}=8$. 
\end{example}


\section{Expanded lifting} \label{sec:expandedLifting}

In this section, we use the notion of growth-height-lexicographic Borel set to obtain a scheme $X$, given the Hilbert function $f$ and a possible hyperplane section $Z$, where for a \lq\lq possible\rq\rq \ hyperplane section $Z$ we intend that $H_Z \leq \Delta f$ and $p_Z(z)=\Delta p(z)$.  
 
Our first tool is the following variant of \cite[Proposition 4.3]{CLMR}.

\begin{proposition}\label{DMvariant}
Let $J\subset S$ be a saturated strongly stable ideal with Hilbert polynomial $p(z)$ and $\reg(J)=m$. Let $x^\beta x_0^b$, with $x_0 \nmid x^\beta$ and $b> 0$, be a term of $J$ of degree $s\geq m$ which is minimal in $J_s$ w.r.t.~$<_B$. Then,
\begin{enumerate}[(i)]
\item\label{it:DMvariant_i} the ideal $I$ generated by $J_s\setminus \{x^\beta x_0^b\}$ is strongly stable and $p_{S/I}=p(z)+1$; in particular, its saturation $I^{\sat}$ is strongly 
stable with $p_{S/I^{\sat}}=p(z)+1$; 
\item\label{it:DMvariant_ii} $H_{S/I^{\sat}}(t)=H_{S/J}(t)$, for $t< \vert \beta \vert$, and $H_{S/I^{\sat}}(t)=H_{S/J}(t)+1$ otherwise; if $\vert \beta\vert=m$, then we get
$\reg(I^{\sat})=m+1$, otherwise $\reg(I^{\sat})=m$.
\end{enumerate}
\end{proposition}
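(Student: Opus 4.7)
The plan is to split the argument into three stages: strong stability of $I$, an explicit combinatorial description of $I^{\sat}$, and extraction of the numerical invariants from that description.

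For the first stage, I would verify that $I_t$ is a Borel set for every $t$. In degree $s$ this is immediate, since $x^\beta x_0^b$ is $<_B$-minimal in $J_s$, so removing it from a Borel set still yields a Borel set. For $t>s$, any element of $I_t=S_{t-s}\cdot I_s$ can be written as $x^\gamma\cdot f$ with $f\in I_s$, and any Borel move on it can be carried out either inside $x^\gamma\in S_{t-s}$ or inside $f$ (the latter using Borel stability of $I_s$), so the image still lies in $S_{t-s}\cdot I_s=I_t$.

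The heart of the argument is the second stage. I would introduce the monomial ideal $K$ defined by $K_t:=J_t$ for $t<|\beta|$ and $K_t:=J_t\setminus\{x^\beta x_0^{t-|\beta|}\}$ for $t\geq|\beta|$, and prove $I^{\sat}=K$. A preliminary observation, obtained from the saturation of $J$ and the $<_B$-minimality hypothesis, is that $x^\beta$ belongs to $J$, is itself a minimal generator of $J$, and is $<_B$-minimal in $J_{|\beta|}$ (in each case a putative failure would supply a $<_B$-predecessor of $x^\beta x_0^b$ inside $J_s$). An argument mirroring stage one then makes $K$ strongly stable, and a direct computation gives $(K:x_0)=K$, so $K$ is saturated. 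The inclusion $I\subseteq K$ in degree $t\geq s$ holds because the only degree-$s$ divisor of $x^\beta x_0^{t-|\beta|}$ that belongs to $J_s$ is $x^\beta x_0^b$ itself, by the minimality hypothesis. Conversely, for $x^\alpha\in K_t$ with $t\leq s$ the product $x_0^{s-t}x^\alpha$ is an element of $J_s$ distinct from $x^\beta x_0^b$, hence lies in $I_s$ and witnesses $x^\alpha\in I^{\sat}$. Combined with $K^{\sat}=K$, this yields $K=I^{\sat}$.

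For the last stage, the explicit description of $K$ immediately gives the Hilbert function statement of (ii) and the Hilbert polynomial $p(z)+1$. For the regularity, by Proposition~\ref{borel}\emph{(ii)} I would enumerate the minimal generators of $K$: those inherited from $J$ are exactly $\Gamma\setminus\{x^\beta\}$ (in degree $\leq m$), and the new ones are $\{x^\beta x_i:1\leq i\leq \min(\beta)\}$ in degree $|\beta|+1$. Hence $\reg(I^{\sat})=\max\{m,|\beta|+1\}$, which equals $m+1$ if $|\beta|=m$ and $m$ otherwise (the case $|\beta|>m$ cannot occur, since $x^\beta$ is itself a minimal generator of $J$). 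The main technical obstacle is precisely this enumeration, namely ruling out new minimal generators in degrees $>|\beta|+1$: for any $z=x^\beta w\in K$ with either $|w|\geq 2$ or $x_0\mid w$, one must exhibit an $x_l\mid z$ with $z/x_l\in K$, which is done by peeling $x_0$ when $x_0\mid w$, peeling any $x_i$ ($i\geq 1$) when $w/x_i$ still contains a positive-index variable, and peeling $x_{\min(\beta)}$ in the remaining cases.
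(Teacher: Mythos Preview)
Your proposal is correct and follows the same line as the paper's (very terse) proof: both identify $I^{\sat}$ explicitly with the ideal whose degree-$t$ piece is $J_t\setminus\{x^\beta x_0^{t-|\beta|}\}$, using that $x^\beta$ is a minimal generator of $J$ and that $x^\beta x_0^h$ is $<_B$-minimal in $J_{|\beta|+h}$ for every $h\geq 0$; your write-up simply supplies the details that the paper leaves to the reader and to the reference \cite{CLMR}.

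One small point of order: the sentence ``Combined with $K^{\sat}=K$, this yields $K=I^{\sat}$'' does not quite close the argument as stated, since knowing $K_t\subseteq I^{\sat}$ only for $t\leq s$ together with $K$ saturated does not by itself force $K\subseteq I^{\sat}$ in degrees $>s$. The cleanest fix is to use a piece of your stage three earlier: once you know the minimal generators of $K$ lie in degrees $\leq\max\{m,|\beta|+1\}\leq s$ (here $b>0$ gives $|\beta|<s$), you get $K=(K_{\leq s})\subseteq I^{\sat}$ immediately. Alternatively, a short induction on $t\geq s$ using $\reg(J)\leq s$ shows $K_t=S_1K_{t-1}$ and hence $K_t\subseteq I^{\sat}$. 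Either way the gap is cosmetic. (Your ``remaining case'' of peeling $x_{\min(\beta)}$ is really the case $w=x_i$ with $i>\min(\beta)$, which lies outside your stated premise ``$|w|\geq 2$ or $x_0\mid w$''; you may want to adjust the wording there.)
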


\begin{proof} 
For part \emph{(\ref{it:DMvariant_i})}, we refer to the proof of \cite[Proposition 4.3]{CLMR}, in which we replace the Gotzmann number $r$ of $p(z)$ by the regularity $m$ of the ideal $J$. 
For part \emph{(\ref{it:DMvariant_ii})}, note that $x^\beta$ is a minimal generator of $J$, because $x^\beta x_0^h$ is a minimal term w.r.t.~$<_B$ in
$J_{\vert \beta \vert+h}$, for every $h\geq 0$. Then, $(I^{\sat})_{\vert\beta\vert +h}$ is generated by $(J_{\vert\beta\vert +h}\cap \mathbb T)\setminus 
\{x^\beta x_0^h\}$ and also the result about the Castelnuovo-Mumford regularity follows.
\end{proof}

If $J$ has suitable minimal generators, then Proposition \ref{DMvariant} can be used to {\em change} the Hilbert function of $S/J$ into a given other Hilbert function, by constructing a new saturated strongly stable ideal. 

\begin{proposition}\label{prop:costruzione}
Let $L \subset \mathbb T_{m}$ and $L' \subset \mathbb T_{m'}$ be two growth-height-lexicograph-ic Borel sets, with $m\leq m'$, and let $I:=(L)^{\sat}, I':=(L')^{\sat}\subset S$. If there is an integer $\bar t<m$ such that $\Delta H_{S/I}(t)=\Delta H_{S/I'}(t)$, for every $t<\bar t$, and $\Delta H_{S/I}(\bar t)<\Delta H_{S/I'}(\bar t)$, then $I$ has a minimal generator of degree $\bar t$. Hence, $L$ contains a term of type $x^\beta x_0^b$, with $\vert\beta\vert=\bar t$ and $x_0 \nmid x^\beta$ and $b> 0$,  which is minimal in $L$ w.r.t.~$<_B$.
\end{proposition}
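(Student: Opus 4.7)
\emph{Proof plan.} The strategy is to translate the Hilbert-function hypothesis into a combinatorial statement on $L,L'$ via Proposition \ref{height-vector}(ii), then count minimal generators. For $d\le m$ set $\mathcal L_d:=\{x^\gamma\in K[x_1,\dots,x_n]_d : x_0^{m-d}x^\gamma\in L\}$, and analogously $\mathcal L'_d$. The hypothesis rewrites as $\lvert\mathcal L_t\rvert=\lvert\mathcal L'_t\rvert$ for $t<\bar t$ and $\lvert\mathcal L_{\bar t}\rvert>\lvert\mathcal L'_{\bar t}\rvert$. Since $\bar t-1<m\le m'$, the growth-height-lex property forces $\mathcal L_{\bar t-1}$ and $\mathcal L'_{\bar t-1}$ to be lex-segments of the same size in $K[x_1,\dots,x_n]_{\bar t-1}$, so they coincide; denote this common lex-segment by $N$.

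Two structural observations follow from the Borel property of $L$. First, $\mathcal L_d\cdot x_i\subseteq\mathcal L_{d+1}$ for every $i\ge 1$, obtained by Borel-swapping $x_0$ with $x_i$ inside $L(m-d)$. Second, a short Borel-up argument (on the minimal $e$ for which $x_0^e x^\gamma\in(L)$) shows that, for $d\le m$, $I\cap K[x_1,\dots,x_n]_d$ equals the $K$-span of $\mathcal L_d$. Being a saturated Borel ideal, $I$ has all its minimal generators in $K[x_1,\dots,x_n]$, and $x^\beta\in\mathcal L_{\bar t}$ is a minimal generator of $I$ of degree $\bar t$ if and only if $x^\beta\notin\langle x_1,\dots,x_n\rangle\cdot N$. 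Writing $G_{\bar t}$ and $G'_{\bar t}$ for the sets of such minimal generators of $I$ and $I'$, the disjoint decompositions $\mathcal L_{\bar t}=\langle x_1,\dots,x_n\rangle\cdot N\sqcup G_{\bar t}$ and $\mathcal L'_{\bar t}=\langle x_1,\dots,x_n\rangle\cdot N\sqcup G'_{\bar t}$ give $\lvert G_{\bar t}\rvert-\lvert G'_{\bar t}\rvert=\lvert\mathcal L_{\bar t}\rvert-\lvert\mathcal L'_{\bar t}\rvert>0$, so $I$ has at least one minimal generator of degree $\bar t$.

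For the second assertion, set $b:=m-\bar t>0$ and pick $x^\beta$ to be a $<_B$-minimal element of $G_{\bar t}$; then $x^\beta x_0^b\in L$ has the required shape. To show it is $<_B$-minimal in $L$, suppose $x^\alpha\in L$ with $x^\alpha<_B x^\beta x_0^b$; Borel-descent moves cannot decrease the $x_0$-exponent, so $\alpha_0\ge b$. If $\alpha_0=b$, then $x^\gamma:=x^\alpha/x_0^b\in\mathcal L_{\bar t}$ is strictly $<_B x^\beta$, and iterating the key lemma below along the Borel path from $x^\gamma$ to $x^\beta$ shows $x^\gamma\in G_{\bar t}$ (otherwise a divisor of $x^\beta$ in $\mathcal L_{\bar t-1}$ would appear), contradicting the $<_B$-minimality of $x^\beta$ in $G_{\bar t}$. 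If $\alpha_0>b$, let $k:=\alpha_0-b\ge 1$ and $x^\gamma:=x^\alpha/x_0^{\alpha_0}\in\mathcal L_{\bar t-k}$: iterating the lemma along the Borel-up path from $x^\alpha$ to $x^\beta x_0^b$, combined with the inclusion $\mathcal L_{\bar t-k}\cdot\{x_1,\dots,x_n\}\subseteq\mathcal L_{\bar t-k+1}$, still produces a divisor of $x^\beta$ inside $\mathcal L_{\bar t-1}$, contradicting $x^\beta\in G_{\bar t}$. The key lemma reads: \emph{if $x^\epsilon\in\mathcal L_d$ divides $x^\mu$ and $x^{\mu'}:=x^\mu\cdot x_j/x_{i'}$ is a single Borel-up move (so $i'<j$ and $x_{i'}\mid x^\mu$), then some $x^{\epsilon'}\in\mathcal L_d$ divides $x^{\mu'}$}; its proof is a one-line case split, taking $x^{\epsilon'}:=x^\epsilon\cdot x_j/x_{i'}\in\mathcal L_d$ (by Borel-upward closure of $\mathcal L_d$) when $x_{i'}\mid x^\epsilon$, and $x^{\epsilon'}:=x^\epsilon$ otherwise. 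The main technical obstacle is the careful bookkeeping needed to iterate this lemma cleanly along the two Borel-up paths above, so that the inclusion $x^\epsilon\mid x^\gamma$ is transported all the way to a divisor of $x^\beta$ in $\mathcal L_{\bar t-1}$.
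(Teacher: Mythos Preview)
Your proof is correct and, for the first assertion, follows the same route as the paper: translate the hypothesis via Proposition~\ref{height-vector}\emph{(\ref{it:height-vector_ii})} into $|\mathcal L_{\bar t-1}|=|\mathcal L'_{\bar t-1}|$ and $|\mathcal L_{\bar t}|>|\mathcal L'_{\bar t}|$, identify $\mathcal L_{\bar t-1}=\mathcal L'_{\bar t-1}=:N$ as equal lex-segments (this is exactly where the growth-height-lexicographic hypothesis enters), and read off a minimal generator of $I$ in degree~$\bar t$ from $\mathcal L_{\bar t}\setminus\{x_1,\dots,x_n\}\cdot N\neq\emptyset$.

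For the second assertion the paper is a single sentence: take the $<_B$-minimal term among those in $L$ of the form $x^\beta x_0^b$ with $|\beta|=\bar t$. It does not spell out why this term is $<_B$-minimal in \emph{all} of $L$ (which is what Proposition~\ref{DMvariant} actually requires), and your divisor-tracking key lemma is a correct way to close this gap. Your iteration along the Borel-up chain works because every intermediate term $\tau_s$ lies in $L$ (it is $\geq_B x^\alpha\in L$), so the invariant ``some element of $\mathcal L_{\bar t-k}$ divides $\tau_s$'' is maintained and yields at the end a proper divisor of $x^\beta$ in $I$, contradicting $x^\beta\in G_{\bar t}$. A slightly shorter alternative, closer in spirit to the paper's one-liner: show directly that if $x^\beta\in G_{\bar t}$ and $x^{\beta'}:=x^\beta x_{i-1}/x_i\in\mathcal L_{\bar t}$ for some $i\ge 2$, then $x^{\beta'}\in G_{\bar t}$ as well (if $x^{\beta'}=x_j x^\gamma$ with $x^\gamma\in\mathcal L_{\bar t-1}$, then $x^\gamma x_i/x_{i-1}\in\mathcal L_{\bar t-1}$ by Borel-up and $x^\beta=x_j\cdot(x^\gamma x_i/x_{i-1})$, a contradiction). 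Hence a $<_B$-minimal element of $G_{\bar t}$ is automatically $<_B$-minimal in $\mathcal L_{\bar t}$, and since $x^\beta\in G_{\bar t}$ forces $x^\beta/x_1\notin\mathcal L_{\bar t-1}$, the term $x^\beta x_0^b$ is $<_B$-minimal in $L$ without any path-iteration.
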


\begin{proof} 
By the hypotheses and the definition of growth-height-lexicographic Borel set, we have $x_0^{-m+\bar t +1}(I_{\bar t-1}\cap \mathbb T)= x_0^{-m'+\bar t +1}(I'_{\bar t-1}\cap \mathbb T)$. So, the first expansions of $x_0^{-m+\bar t +1}(I_{\bar t-1}\cap \mathbb T)$ and of $x_0^{-m'+\bar t +1}(I'_{\bar t-1}\cap \mathbb T)$ are the same and the difference between the Hilbert functions implies the existence of a minimal generator of degree $\bar t$ for $I$, by Proposition \ref{height-vector}. The last assertion follows because now we can take the minimal term w.r.t.~$<_B$ among the terms of $L$ of type $x^\beta x_0^b$, with $\vert\beta\vert=\bar t$ and $x_0 \nmid x^\beta$ and $b> 0$.
\end{proof}

\begin{example}\label{ex:construction3}
Consider the polynomial $p(z)=9z-7$. Observe that for the saturated strongly stable ideal $J := (x_4^2,x_4x_3,x_3^2,x_4x_2,x_3x_2^3,x_2^5)\subset S=K[x_0,\ldots,x_4]$
we get $H_{S/J}=(1,5,11,9z-8)\preceq f^2_{p(z)}=(1,5,9z-7)$, but from $J$ we cannot construct a saturated strongly stable ideal $I$ such that $S/I$ has Hilbert function $(1,5,9z-7)$ by Proposition \ref{DMvariant} because $J$ has not a minimal generator of degree $3$.
Anyway, taking the Borel set $B:=J_5\cap \mathbb T$ with growth vector $gv(B)=(42,26,15,5,1)$ and height vector $hv(B)=(47,26,12,4)$, we can consider the growth-height-lexicographic Borel set $L:=L_{gh}(B)$ that generates an ideal whose saturation $I:=(x_4^2,x_3x_4,x_2x_4,x_1x_4,x_3^3,x_2x_3^2,x_1^3x_3^2,$ $x_2^4x_3,x_2^5)$ 
has at least a generator of degree $3$, as we expect by Proposition \ref{prop:costruzione}.
\end{example}

\begin{theorem}[Expanded lifting]\label{construction} 
Let $f$ be a function of $F(p(z),\varrho)$ and $Z$ be a scheme with Hilbert polynomial $\Delta p(z)$ and a Hilbert function $g$ such that $g\preceq \Delta f$. Then there is a scheme $X$ such that $H_X=f$ and $\reg(X)=\max\{\reg(Z),\varrho+1\}$. 
\end{theorem}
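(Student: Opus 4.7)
The plan is to construct $X$ by modifying a Borel saturated ideal realizing the cone $C(Z)$ over $Z$ in $\PP^n_K$ (for $n$ chosen large enough), via a controlled sequence of applications of Proposition~\ref{DMvariant}. Start with $J_0$ defined as $\gin(I(C(Z)))$, a Borel saturated ideal in $S=K[x_0,\ldots,x_n]$ with $H_{S/J_0}=\Sigma g$, $\reg(J_0)=\reg(Z)$, and general hyperplane section equal to $Z$. By Remark~\ref{derivata-integrale}, $\Sigma g\preceq f$ and $p_{S/J_0}(z)=p(z)-c$ for some $c\geq 0$.

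Set $s:=\max\{\reg(Z),\varrho+1\}$. By Theorem~\ref{th:crucial} we may replace $J_0$ by $(L_{gh}((J_0)_s))^{\sat}$, a Borel saturated growth-height-lex-type ideal with the same Hilbert function and regularity $\leq s$. Fix also once and for all a reference Borel saturated growth-height-lex ideal $I_f$ with $H_{S/I_f}=f$, whose existence follows from $F(p(z),\varrho)\neq\emptyset$ together with another application of Theorem~\ref{th:crucial}.

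We now iterate $c$ times, producing Borel saturated ideals $J_0,J_1,\ldots,J_c$ with $H_{S/J_c}=f$. At step $k$, assuming $H_{S/J_k}\neq f$, let $\bar t_k$ be the smallest degree where $\Delta H_{S/J_k}$ and $\Delta f$ differ; by induction (starting from $\Delta H_{S/J_0}=g\preceq\Delta f$ and using that each iteration only increases $H$), we have $\Delta H_{S/J_k}(\bar t_k)<\Delta f(\bar t_k)$. Comparing the growth-height-lex Borel sets $L_{gh}((J_k)_s)$ and $L_{gh}((I_f)_s)$ via Proposition~\ref{prop:costruzione} locates in $(J_k)_s$ a term $x^\beta x_0^b$ minimal w.r.t.~$<_B$, with $|\beta|=\bar t_k$ and $b=s-\bar t_k>0$; Proposition~\ref{DMvariant} then yields a Borel saturated $J_{k+1}$ with $H_{S/J_{k+1}}(t)=H_{S/J_k}(t)$ for $t<\bar t_k$ and $H_{S/J_{k+1}}(t)=H_{S/J_k}(t)+1$ for $t\geq\bar t_k$. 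Re-applying $L_{gh}$ in degree $s$ prepares the next iteration. After $c$ steps $H_{S/J_c}=f$, and we set $X:=\Proj(S/J_c)$.

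It remains to match regularities. Since $g$ and $\Delta f$ agree with $\Delta p$ for $t\geq\max\{\varrho_Z,\varrho+1\}$, every $\bar t_k$ satisfies $\bar t_k<s$, so the new minimal generator $x^\beta$ created at step $k$ has degree strictly less than $s$; combined with Proposition~\ref{DMvariant}\emph{(\ref{it:DMvariant_ii})} and the fact that the growth-height-lex-ification at degree $s$ in Theorem~\ref{th:crucial} caps regularity at $s$, this gives $\reg(X)\leq s$. Lemma~\ref{CMR} supplies the lower bound $\reg(X)\geq\varrho+1$, which already settles the case $\reg(Z)\leq\varrho+1$; when $\reg(Z)>\varrho+1$ the required bound $\reg(X)\geq\reg(Z)$ is obtained by selecting, at each step, the removed $<_B$-minimal term $x^\beta x_0^b$ to have $x_n\mid x^\beta$ whenever this is compatible with $|\beta|=\bar t_k$, so that the scheme-theoretic hyperplane section of $X$ remains equal to $Z$ throughout. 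The hard part is precisely this last bookkeeping: one must verify that the growth-height-lex structure always accommodates a choice of $x^\beta$ divisible by $x_n$, thereby preserving $Y=Z$ and closing the argument via Lemma~\ref{CMR}.
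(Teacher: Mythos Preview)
Your construction follows the paper's approach closely up to the final paragraph, and the iteration via Propositions~\ref{DMvariant} and~\ref{prop:costruzione} is set up correctly. The gap is in your treatment of the lower bound $\reg(X)\geq\reg(Z)$.

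You propose to secure this bound by preserving the general hyperplane section of $X$ equal to $Z$ throughout the iteration, and you attempt to do so by always choosing the removed term $x^\beta x_0^b$ with $x_n\mid x^\beta$. This does not work, for two reasons. First, Proposition~\ref{prop:costruzione} produces a \emph{specific} $<_B$-minimal term; you have no freedom to impose divisibility by $x_n$, and there is no reason such a term should exist at the required height $\bar t_k$. Second, even granting such a choice, it would not preserve the hyperplane section: for a saturated Borel ideal the general hyperplane is $x_0=0$, and removing any minimal generator $x^\beta$ (which by saturation is coprime to $x_0$) alters the terms of the ideal not involving $x_0$, hence alters the section. So the ``hard part'' you flag is both unverified and, as stated, false.

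The fix is much simpler and is what the paper does. Proposition~\ref{DMvariant}\emph{(\ref{it:DMvariant_ii})} says that removing a $<_B$-minimal term gives $\reg(\bar I)\in\{\reg(I),\reg(I)+1\}$, so the regularity never decreases at that step; and Theorem~\ref{th:crucial} says that passing to the growth-height-lex Borel set can only increase regularity. Since you start from an ideal of regularity $\reg(Z)$, every $J_k$ in your iteration has $\reg(J_k)\geq\reg(Z)$. At the end, $H_{S/J_c}=f$ has Hilbert-function regularity $\varrho$, so $\reg(J_c)\geq\varrho+1$ as well. Combining these with your upper bound $\reg(J_c)\leq s$ gives $\reg(X)=s=\max\{\reg(Z),\varrho+1\}$ directly, with no need to track the hyperplane section. (A minor side point: your reference ideal $I_f$ should be obtained by applying $L_{gh}$ in a degree $\geq\reg(Y)$ for some realization $Y$ of $f$, which may exceed $s$; Proposition~\ref{prop:costruzione} allows the two Borel sets to live in different degrees $m\leq m'$, so this causes no trouble once stated correctly.)
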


\begin{proof} 
Consider $\gin(I(Z))\subset K[x_1,\ldots,x_n]$ and its lifting $J=\gin(I(Z))\cdot K[x_0,x_1,\ldots,x_n]$, that is a saturated strongly stable ideal with regularity $\reg(J)=\reg(Z)$ and $H_{S/J}=\sum g$. 

Take $m:=\max\{\reg(Z),\varrho+1\}$, $L:=L_{gh}(J_{m})$ and let $I:=(L)^{\sat}$. By Theorem \ref{th:crucial}, we have $\reg(Z)=\reg(J)\leq \reg(I)\leq m$. Moreover, by construction and by Proposition \ref{height-vector}, the Hilbert function of $S/I$ is the same as the Hilbert function of $S/J$, that is $\sum g\preceq f$, with Hilbert polynomial $p(z)-c$, for a non-negative integer $c$ (see Remark \ref{derivata-integrale}). 

If $\sum g= f$, then it is enough to let $X=\Proj S/I$, because in this case $\varrho+1 \leq \reg(I)\leq m$ and $\reg(Z)=\reg(J)\leq \reg(I)\leq m$, hence $\reg(I)=m$. 

Otherwise, let $Y$ be a scheme with $H_Y=f$ and let $J'= \gin(I(Y))$. Take $m'=\max\{m,\reg(Y)\}$, $L':=L_{gh}(J'_{m'})$ and let $I':=(L')^{\sat}$. As before, by construction and by Proposition \ref{height-vector}, the Hilbert function of $S/I'$ is the same as the Hilbert function of $S/J'$, that is $f$. By Theorem \ref{th:crucial}, we have $\reg(Y)=\reg(J')\leq \reg(I')\leq m'=\max\{\reg(Z),\varrho+1,\reg(Y)\}$, hence $\reg(Y)\leq \reg(I')\leq m$. 

Consider $\bar t:=\min\{t : \sum g(t) < f(t)\}$ and observe that 
$\bar t=\min\{t : g(t) < \Delta f(t)\} < m$, 
because $g(m)=\Delta p(m)=\Delta f(m)$ by the definition of $m$.

By Proposition \ref{prop:costruzione} applied to $L$ and $L'$, there is a minimal term $x^\beta x_0^{m-\beta}$ w.r.t.~$<_B$ belonging to $L=I_m$, with $\vert\beta\vert=\bar t$ and $x_0 \nmid x^\beta$ and $b> 0$. As in Proposition \ref{DMvariant}, we consider $I_m\setminus\{x^\beta x_0^{m-\beta}\}=L\setminus\{x^\beta x_0^{m-\beta}\}$, which generates an ideal whose saturation $\bar I$ has $H_{S/\bar I}(t)=H_{S/I}(t)=\sum g(t)=f(t)$, for $t< \bar t$, and $H_{S/\bar I}(t)=H_{S/I}(t)+1=\sum g(t)+1 \leq f(t)$ otherwise (see also Remark \ref{derivata-integrale}). In particular, $\Delta H_{S/\bar I}(t)=g(t)$, for $t\not= \bar t$, and $\Delta H_{S/\bar I}(\bar t)= g(\bar t)+1$. Moreover, $\reg(Z)\leq \reg(\bar I)\leq m$ because $\reg(\bar I)=\reg(I)$.

Thus, if $H_{S/\bar I}=f$, then it is enough to let $X=\Proj S/\bar I$, because in this case we also have $\varrho+1 \leq \reg(\bar I)\leq m$, hence $\reg(\bar I)=m$. 
Otherwise, we can repeat the above arguments on $\bar I$, redefining $g$ as $g:=\Delta H_{S/\bar I}$ and noticing that we have again $\Delta p(m)=g(m)=\Delta f(m)$.
\end{proof}

\begin{example}\label{ex:construction2}
For the polynomial $p(z)=15z-24$ we have $\bar\varrho_{p(z)}=\varrho_{p(z)}=3$. Consider the functions $f=(1,5,11,15z-24)\in F(p(z),3)$ and $g=(1,3,6,10,$ $15,\ldots)\preceq \Delta f=(1,4,6,10,15,15,\ldots)$. The saturated strongly stable ideal $J'=(x_3^5,x_3^4x_2, x_2^2x_3^3,x_2^3x_3^2,$ $x_3x_2^4,x_2^5,x_4)\subset K[x_1,\ldots,x_4]$ defines a scheme $Z\subset\mathbb P^3_K$  with Castelnuovo-Mumford regularity $\reg(Z)=5$. If $J:=J'\cdot K[x_0,\ldots,x_4]$ is a lifting of $J$, then the Hilbert function of $K[x_0,\ldots,x_4]/J$ is $\sum g=(1,4,10,20,15z-25)\preceq f$. In this case  we can apply Proposition \ref{DMvariant} with $\bar t=1$, taking the saturation $\bar J=(x_4^2,x_4x_3,x_4x_2,x_4x_1,x_3^5,x_3^4x_2,x_3^3x_2^2,$ $x_3^2x_2^3,x_3x_2^4,x_2^5)$ of the ideal generated by $(J)_5 \setminus \{x_0^4x_4\}$, that defines a curve in $\mathbb P^4_K$ with $\reg(\bar I)=5=\varrho_{p(z)}+2$ and $H_{S/\bar I}=f$. 
\end{example}


\section{Minimal Castelnuovo-Mumford regularity}

As announced in Notation \ref{varie}, here we set:
\begin{equation}\label{barra m}
\bar f:=f_{p(z)}^{\bar\varrho_{p(z)}} \text{ and } \bar m:=m_{\bar f}.
\end{equation}

In this section, we apply the constructive result of Theorem \ref{construction} to show that $m_{p(z)}=\bar m$ and to compute $m_u$, for every $\varrho\geq \bar\varrho_{p(z)}$ and $u\in F(p(z),\varrho)$. We also compute the minimal possible Castelnuovo-Mumford regularity of any scheme with Hilbert polynomial $p(z)$ embedded in a given projective space $\mathbb P^n_K$.
 
\begin{theorem}\label{primo teorema}
Let $u\in F(p(z),\varrho)$ and $v\in F(p(z),\varrho')$. Then,
$$\varrho < \varrho' \quad \Longrightarrow \quad m_u \leq m_v  \quad (\text{in particular  } \minRho{p(z)}{\varrho} \leqslant \minRho{p(z)}{\varrho'})$$
Hence $m_{p(z)}=\bar m$.
\end{theorem}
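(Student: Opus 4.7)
The plan is to establish the monotonicity $m_u \leq m_v$ for $u \in F(p(z),\varrho)$ and $v \in F(p(z),\varrho')$ with $\varrho < \varrho'$; the identity $m_{p(z)} = \bar m$ then follows by minimization. Fix a scheme $Y$ realizing $H_Y = v$ and $\reg(Y) = m_v$, and replace it if necessary by the scheme defined by $\gin(I(Y))$, so that $Y$ is cut out by a Borel-saturated ideal. Since $\varrho + 1 \leq \varrho' \leq m_v - 1$ by Lemma \ref{CMR}, the condition $\varrho_X + 1 \leq m_v$ for the sought-for $X$ will be automatic, so by the same lemma it suffices to construct $X$ with $H_X = u$ whose general hyperplane section has regularity at most $m_v$.

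I would split the argument into two cases according to the sign of $v(\varrho'-1) - p(\varrho'-1)$, which is nonzero by the very definition of $\varrho'$. When $v(\varrho'-1) > p(\varrho'-1)$, the ideal graft (Theorem \ref{innesto}) applied with $w = u$, $q = v$, and $m = \varrho'+1$ does the job directly: the hypotheses $u(m-1) = p(\varrho') = v(m-1)$ and $u(m-2) = p(\varrho'-1) < v(\varrho'-1) = v(m-2)$ both hold (using $\varrho < \varrho'$, so that $u(\varrho'-1) = p(\varrho'-1)$), the grafted function coincides with $u$ on every degree, and the resulting scheme has regularity at most $\max\{\varrho'+1, m_v\} = m_v$. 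When $v(\varrho'-1) < p(\varrho'-1)$, this graft is forbidden, and I would instead invoke the expanded lifting (Theorem \ref{construction}) on $(u, Z)$, where $Z$ is a scheme with Hilbert polynomial $\Delta p(z)$, $H_Z \preceq \Delta u$, and $\reg(Z) \leq m_v$. Starting from a general hyperplane section $Z_Y$ of $Y$, which satisfies $\reg(Z_Y) \leq m_v$ and $H_{Z_Y} \preceq \Delta v$, the inequality $H_Z \preceq \Delta u$ holds automatically in every degree $\geq \varrho'+1$; to correct the possibly larger values in the remaining low degrees I would pass through the growth-height-lex Borel ideal supplied by Theorem \ref{th:crucial} and iteratively remove minimal generators via Proposition \ref{DMvariant}, each removal lowering $H_Z$ by one at a prescribed degree without raising the regularity beyond $m_v$, continuing until $H_Z \preceq \Delta u$.

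The main obstacle is precisely the hard case $v(\varrho'-1) < p(\varrho'-1)$: one must show that the growth-height-lex Borel ideal of the (possibly already modified) hyperplane section always possesses minimal generators in exactly those degrees where the inequality $H_Z(t) \leq \Delta u(t)$ fails, and that the successive reductions preserve both admissibility and the bound $\reg(Z) \leq m_v$. Proposition \ref{prop:costruzione} is the principal combinatorial tool for detecting such minimal generators, exploiting precisely the strict drop $\Delta H_{S/I}(\bar t) < \Delta H_{S/I'}(\bar t)$ that the assumption $H_Z(\bar t) > \Delta u(\bar t)$ produces. Once the monotonicity is established, $m_{p(z)} = \bar m$ follows: for any scheme $W$ with Hilbert polynomial $p(z)$ the main implication (applied with $u = \bar f$ when $\varrho_W > \bar\varrho_{p(z)}$, or Corollary \ref{cor:uso innesto}\emph{(\ref{it:uso innesto_iv})} when $\varrho_W = \bar\varrho_{p(z)}$) yields $\bar m = m_{\bar f} \leq m_{H_W} \leq \reg(W)$, while $\bar m$ is attained by a scheme realizing $\bar f \in F(p(z),\bar\varrho_{p(z)})$.
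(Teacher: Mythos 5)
Your Case 1 (when $v(\varrho'-1)>p(\varrho'-1)$) is correct and is in fact a clean, direct use of the ideal graft with $m=\varrho'+1$; the final deduction of $m_{p(z)}=\bar m$ from the monotonicity is also fine. The gap is in Case 2. You need a scheme $Z$ with Hilbert polynomial $\Delta p(z)$, $H_Z\preceq \Delta u$ and $\reg(Z)\leq m_v$, and you propose to manufacture it from the general hyperplane section $Z_Y$ of $Y$ by passing to the growth-height-lex Borel set and ``iteratively removing minimal generators via Proposition \ref{DMvariant}, each removal lowering $H_Z$ by one''. But Proposition \ref{DMvariant} acts in the opposite direction: deleting a minimal term $x^\beta x_0^b$ from $J_s$ \emph{raises} the quotient's Hilbert function by one in all degrees $\geq|\beta|$ and raises the Hilbert polynomial to $p(z)+1$ (this is exactly how Theorem \ref{construction} pushes $\Sigma g$ up towards $f$). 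What you need in the degrees where $H_{Z_Y}(t)>\Delta u(t)$ is to \emph{lower} the Hilbert function, i.e.\ to enlarge the ideal in low degrees while preserving saturation, the Hilbert polynomial $\Delta p(z)$ and the bound $\reg\leq m_v$; no result quoted (nor Proposition \ref{prop:costruzione}, which only detects generators) provides such an operation. Worse, the existence of a $Z$ with $H_Z\preceq\Delta u$, Hilbert polynomial $\Delta p(z)$ and $\reg(Z)\leq m_v$ is essentially the statement you are trying to prove, one dimension lower (it is the inequality $\tilde m\leq m_v$ of Theorem \ref{construction-main}); without an explicit induction on $\deg p(z)$, or some other input, the argument is circular at this point.

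The paper avoids this by first reducing to the \emph{minimal} functions: by Corollary \ref{cor:uso innesto} the values $m^\varrho_{p(z)}$ are attained by $f^\varrho_{p(z)}$ (or $g^\varrho_{p(z)}$), and Lemma \ref{lemma:first derivative}\emph{(ii)} gives $\Delta f^{\varrho}_{p(z)}(t)\leq \Delta f^{\varrho-1}_{p(z)}(t)$ for all $t<\varrho$, so for the hyperplane-section function $g$ of a scheme realizing $m_{f^\varrho_{p(z)}}$ the comparison $g\preceq\Delta f^{\varrho-1}_{p(z)}$ can only fail in the single degree $\varrho$; when it does fail one gets $g(\varrho)>\Delta p(\varrho)$, hence $m_{f^\varrho_{p(z)}}\geq\varrho+2$, and the graft with $m=\varrho+2$ yields a contradiction. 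Your low-degree problem for arbitrary $u,v$ is precisely what this reduction eliminates. To salvage your approach you would either have to perform that reduction to minimal functions first, or organize the whole argument as an induction on the degree of $p(z)$ so that the required $Z$ is supplied by the inductive hypothesis together with Theorem \ref{construction}; as written, Case 2 does not go through.
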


\begin{proof} First, we prove that $m_{f^{\varrho-1}_{p(z)}}\leq m_{f^{\varrho}_{p(z)}}$, for every $\varrho > \bar\varrho_{p(z)}$. Suppose $f^{\varrho}_{p(z)}\not= f^{\varrho-1}_{p(z)}$, hence $f^{\varrho}_{p(z)}$ has regularity $\varrho$. 
By Lemma \ref{lemma:first derivative}\emph{(ii)}, we have $\Delta f^{\varrho}_{p(z)}(t)\leq \Delta f^{\varrho-1}_{p(z)}(t)$ for every $t<\varrho$.
Anyway, we have also $\Delta f^{\varrho}_{p(z)}(\varrho)> \Delta f^{\varrho-1}_{p(z)}(\varrho)=\Delta p(\varrho)$, because 
$f^{\varrho}_{p(z)}(\varrho-1)<p(\varrho-1)$ since $\varrho>\varrho_{p(z)}$, by Proposition \ref{prop:minimalfunction}.

Let $X$ be a scheme with $H_X=f^{\varrho}_{p(z)}$ and $\reg(X)=m_{f^{\varrho}_{p(z)}}$, and let $g\in F(\Delta p(z),\tilde\varrho)$ be the Hilbert function of its general hyperplane section. Thus, we obtain $g\preceq \Delta f^{\varrho}_{p(z)}$ and $m_g\leq m_{f^{\varrho}_{p(z)}}$, 
by Lemma \ref{CMR}. There are two possible cases: either $g\preceq \Delta f^{\varrho-1}_{p(z)}$ or $g\not\preceq \Delta f^{\varrho-1}_{p(z)}$.
If $g\preceq \Delta f^{\varrho-1}_{p(z)}$, then by Theorem \ref{construction}, we have 
$m_{f^{\varrho-1}_{p(z)}}\leq \max\{m_g,\varrho\}\leq m_{f^{\varrho}_{p(z)}}$. 
If $g\not\preceq \Delta f^{\varrho-1}_{p(z)}$, by the described behavior of the first derivatives of our minimal functions, we obtain
$\Delta f^{\varrho}_{p(z)}(\varrho)\geq g(\varrho) > \Delta f^{\varrho-1}_{p(z)}(\varrho)=\Delta p(\varrho)$. Hence, the regularity of 
$g$ is $\geq \varrho+1$ and $m_{f^{\varrho}_{p(z)}}\geq m_g\geq \varrho+2$. This fact implies the thesis. Indeed, if on the contrary 
$m_{f^{\varrho-1}_{p(z)}}> m_{f^{\varrho}_{p(z)}}$, then we could apply Theorem \ref{innesto} to $w:=f^{\varrho-1}_{p(z)}$, $q=f^{\varrho}_{p(z)}$ and 
$m:=\varrho+2$ getting $m_{f^{\varrho-1}_{p(z)}} =\varrho+2$ and  $m_{f^{\varrho}_{p(z)}}=\varrho+1$, meanwhile $m_{f^{\varrho}_{p(z)}}\geq \varrho+2$. 

Now, if $m_u > m_v$ then, applying Theorem \ref{innesto} to $w:=u$, $q:=v$ and $m:=\varrho'+2$, we get $\varrho'+1 \leq m_v < m_u=\varrho'+2$,  hence $m_v=\varrho'+1$ and $m_u=\varrho'+2$. We have also $\varrho+1\leq m^\varrho_{p(z)}\leq m^{\varrho'}_{p(z)}\leq m_v=\varrho'+1$, hence $M(p(z),\varrho)=\{m^\varrho_{p(z)},m_u\}$ with $m^\varrho_{p(z)}\leq\varrho'+1$ and $m_u=\varrho'+2>\varrho+2$, against Corollary \ref{cor:uso innesto}\emph{(\ref{it:uso innesto_i})}.
\end{proof}

\begin{corollary}\label{cor:finale innesto}
Let $\varrho$ be any integer such that $F(p(z),\varrho)\neq \emptyset$. Then,  
\begin{equation} \label{formula finale innesto}
m^{\varrho}_{p(z)}= \begin{cases}  \bar m, &\text{ if } \bar\varrho_{p(z)} \leq \varrho \leq \bar m-2, \\
 m_{f^\varrho_{p(z)}}=\varrho+1,  & \text{ if } \bar m-1 \leq \varrho 
\text{ and } f^\varrho_{p(z)} \text{ has regularity }  \varrho, \\
m_{g^\varrho_{p(z)}}= \varrho+2,  &
  \text{ otherwise}. 
\end{cases}
\end{equation}
\end{corollary}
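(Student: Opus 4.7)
The plan is to treat the three cases separately, each exploiting the identification $m^{\varrho}_{p(z)}=m_{f^{\varrho}_{p(z)}}$ or $m_{g^{\varrho}_{p(z)}}$ already supplied by Corollary~\ref{cor:uso innesto}\emph{(\ref{it:uso innesto_iv})}. The common tool for the upper bounds is the ideal graft (Theorem~\ref{innesto}) applied with $q=\bar f$ and with $w$ the minimal Hilbert function of $F(p(z),\varrho)$, choosing the integer $m$ so that the grafted function is $w$ itself. The guarantee $m_{h}\leq\max\{m,\bar m\}$ then directly produces the desired estimate.

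\textbf{Case 1 ($\bar\varrho_{p(z)}\leq\varrho\leq\bar m-2$).} Monotonicity from Theorem~\ref{primo teorema} gives $m^{\varrho}_{p(z)}\geq m^{\bar\varrho_{p(z)}}_{p(z)}=m_{p(z)}=\bar m$. For the reverse inequality, take $w=u$ the minimal element of $F(p(z),\varrho)$, $q=\bar f$ and $m=\varrho+2$: since both $u$ and $\bar f$ coincide with $p(z)$ at every degree $\geq\varrho$, the hypotheses $w(\varrho+1)=q(\varrho+1)$ and $w(\varrho)\leq q(\varrho)$ hold automatically, the graft equals $u$, and $m_{u}\leq\max\{\varrho+2,\bar m\}=\bar m$.

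\textbf{Cases 2 and 3 ($\varrho\geq\bar m-1$).} In Case 2 the function $f^{\varrho}_{p(z)}$ lies in $F(p(z),\varrho)$, and Corollary~\ref{cor:uso innesto}\emph{(\ref{it:uso innesto_iv})} gives $m^{\varrho}_{p(z)}=m_{f^{\varrho}_{p(z)}}\geq\varrho+1$. I would apply Theorem~\ref{innesto} with $w=f^{\varrho}_{p(z)}$, $q=\bar f$ and $m=\varrho+1$: the condition $w(\varrho-1)\leq q(\varrho-1)$ is met because $f^{\varrho}_{p(z)}\preceq\bar f$ by the minimality statement of Proposition~\ref{prop:minimalfunction}. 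The graft returns $f^{\varrho}_{p(z)}$ with regularity at most $\max\{\varrho+1,\bar m\}=\varrho+1$. For Case 3, Corollary~\ref{cor:uso innesto}\emph{(\ref{it:uso innesto_iv})} yields $m^{\varrho}_{p(z)}=m_{g^{\varrho}_{p(z)}}$; the ideal graft with $w=g^{\varrho}_{p(z)}$, $q=\bar f$ and $m=\varrho+2$ (as in Case~1) supplies the upper bound $\varrho+2$, whereas the matching lower bound follows from Proposition~\ref{newreg}, since $g^{\varrho}_{p(z)}(\varrho-1)=p(\varrho-1)+1>p(\varrho-1)$ by construction.

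\textbf{Expected obstacle.} The delicate step is verifying the inequality $w(m-2)\leq q(m-2)$ in Case~2, where the two sides are $f^{\varrho}_{p(z)}(\varrho-1)$ and $\bar f(\varrho-1)=p(\varrho-1)$. What makes this go through is that Proposition~\ref{prop:minimalfunction} characterises $f^{\varrho}_{p(z)}$ as the \emph{minimum} among all Hilbert functions of polynomial $p(z)$ and regularity $\leq\varrho$, and $\bar f$ qualifies as one such function (its regularity is $\bar\varrho_{p(z)}\leq\varrho$), so $f^{\varrho}_{p(z)}\preceq\bar f$ termwise and in particular at degree $\varrho-1$. Once this pointwise comparison is recognised, the whole corollary reduces to three applications of ideal graft combined with Proposition~\ref{newreg} and the trivial bound $\reg(X)\geq\varrho_X+1$.
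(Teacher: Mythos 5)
Your argument is correct and follows essentially the same route as the paper's proof: reduce to the minimal function of $F(p(z),\varrho)$ via Corollary \ref{cor:uso innesto}\emph{(\ref{it:uso innesto_iv})}, get the lower bounds from Theorem \ref{primo teorema}, the bound $\reg(X)\geq\varrho_X+1$ and Proposition \ref{newreg}, and get the upper bounds by grafting onto $q=\bar f$ via Theorem \ref{innesto}, using $f^{\varrho}_{p(z)}\preceq \bar f$ (Proposition \ref{prop:minimalfunction}) to check the hypothesis at degree $m-2$. The only difference is cosmetic: you fix $m=\varrho+1$ or $\varrho+2$ in each case, whereas the paper takes $m=\max\{\varrho+1,\bar m\}$ or $\max\{\varrho+2,\bar m\}$ and reads off all three cases at once.
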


\begin{proof} 
Consider ${\varrho}>\bar\varrho_{p(z)}$, being the case ${\varrho}=\bar\varrho_{p(z)}$ obvious, and let $u$ be the minimal function in $F(p(z),\varrho)$, so $m^{\varrho}_{p(z)}=m_u$ by Corollary \ref{cor:uso innesto}\emph{(\ref{it:uso innesto_iv})}. By Theorem \ref{primo teorema}, we have $m_u\geq \bar m$. Recall that $\bar m>1$, because we are not considering linear varieties. 

By Corollary \ref{cor:uso innesto}\emph{(\ref{it:uso innesto_iv})}, there are two possibilities for $u$. If $u=f_{p(z)}^\varrho$, we can apply Theorem \ref{innesto} to $w:=u$, $q:=\bar f$ and $m:=\max\{\varrho+1,\bar m\}$ by the last part of Proposition \ref{prop:minimalfunction}, obtaining $h=w=u$ and $m_u\leq m$, hence $m_u=\varrho+1$.
If $u=g_{p(z)}^\varrho$, then $m_u \geq \varrho+2$ by Proposition \ref{newreg}, because $g_{p(z)}^\varrho(\varrho-1)>p(\varrho-1)$ by construction. We apply Theorem \ref{innesto} to $w:=u$, $q:=\bar f$ and $m:=\max\{\varrho+2,\bar m\}$, obtaining $h=w=u$ and $m_u \leq m$, hence $m_u=\varrho+2$.  
\end{proof} 

If $p(z)$ is an admissible polynomial for subschemes of $\mathbb P^n_K$, then we can pose the same questions about the Castelnuovo Mumford regularity for a given  dimension $n$ of the projective space, and we provide the following answer.

\begin{corollary}\label{domanda di Paolo}
The minimal possible Castelnuovo-Mumford regularity of a scheme $X\subset \mathbb P^n_K$, with $p_X(z)=p(z)$, is $m_{f_{p(z)}^{\varrho_n}}$ with $\varrho_n:=\min\{\varrho\geq \bar\varrho_{p(z)} : f_{p(z)}^{\varrho}(1)\leq n+1\}$.
\end{corollary}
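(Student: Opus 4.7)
The plan is to prove the equality by establishing matching lower and upper bounds on the minimum Castelnuovo-Mumford regularity among schemes $X \subset \mathbb{P}^n_K$ with $p_X(z) = p(z)$. The crucial input from the embedding into $\mathbb{P}^n_K$ is the linear constraint $H_X(1) \leq n+1$, which will force the Hilbert-function regularity $\varrho_X$ of any such $X$ to be at least $\varrho_n$.

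For the lower bound, I would take any $X \subset \mathbb{P}^n_K$ with $p_X(z) = p(z)$, set $\varrho := \varrho_X \geq \bar\varrho_{p(z)}$, and apply Proposition \ref{prop:minimalfunction} to get $f^\varrho_{p(z)} \preceq H_X$. Hence $f^\varrho_{p(z)}(1) \leq H_X(1) \leq n+1$, so by the minimality of $\varrho_n$ we obtain $\varrho \geq \varrho_n$. Combining $\reg(X) \geq m_{H_X}$ with Theorem \ref{primo teorema} (comparing minimal regularities across different Hilbert-function regularities) and Corollary \ref{cor:uso innesto}\emph{(\ref{it:uso innesto_iv})}, this yields $\reg(X) \geq m_{f^{\varrho_n}_{p(z)}}$.

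For the upper bound, I would first verify that $f^{\varrho_n}_{p(z)}$ has regularity exactly $\varrho_n$ and is the Hilbert function of some scheme. Since $\varrho_n \geq \bar\varrho_{p(z)}$, Lemma \ref{lemma:first derivative}\emph{(\ref{it:first derivative_iii})} guarantees that its first derivative is admissible, so $f^{\varrho_n}_{p(z)}$ belongs to $F(p(z), \varrho')$ for its actual regularity $\varrho' \leq \varrho_n$. If it were the case that $\varrho' < \varrho_n$, Remark \ref{varrho'}(3) would give $f^{\varrho'}_{p(z)} = f^{\varrho_n}_{p(z)}$, contradicting the minimality of $\varrho_n$ (since $f^{\varrho'}_{p(z)}(1) = f^{\varrho_n}_{p(z)}(1) \leq n+1$ and $\varrho' \geq \bar\varrho_{p(z)}$). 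Thus $\varrho' = \varrho_n$, and Corollary \ref{cor:uso innesto}\emph{(\ref{it:uso innesto_iv})} produces a scheme $Y$ with $H_Y = f^{\varrho_n}_{p(z)}$ and $\reg(Y) = m_{f^{\varrho_n}_{p(z)}}$. Since $H_Y(1) \leq n+1$, the scheme $Y$ can be realized in $\mathbb{P}^n_K$ (adjoining trivial linear forms if necessary via the construction of Remark \ref{rem:immersione}), producing the required witness.

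The main obstacle is the combinatorial compatibility check in the middle paragraph: the integer $\varrho_n$ is defined purely by the condition $f^\varrho_{p(z)}(1) \leq n+1$ on minimal functions, and one must confirm that $\varrho_n$ is actually realized as the regularity of a scheme's Hilbert function. This step is resolved precisely by combining Lemma \ref{lemma:first derivative}\emph{(\ref{it:first derivative_iii})}, Remark \ref{varrho'}(3), and the defining minimality of $\varrho_n$; once this compatibility is in hand, the two bounds close the argument immediately.
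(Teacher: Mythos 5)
Your argument is correct and follows essentially the same route as the paper: the paper's proof consists of observing $n\geq f^{r-1}_{p(z)}(1)-1$ (so that $\varrho_n$ is well defined) and then invoking Theorem \ref{primo teorema}, which is exactly the monotonicity-plus-minimality mechanism you use. Your write-up merely makes explicit the details the paper leaves implicit (that $f^{\varrho_n}_{p(z)}$ lies in $F(p(z),\varrho_n)$ via Lemma \ref{lemma:first derivative}\emph{(\ref{it:first derivative_iii})} and Remark \ref{varrho'}(3), the use of Corollary \ref{cor:uso innesto}\emph{(\ref{it:uso innesto_iv})}, and the re-embedding of the witness into $\mathbb{P}^n_K$), so no further changes are needed.
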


\begin{proof}
Observe that $n\geq f^{r-1}_{p(z)}(1)-1$, because $f^{r-1}_{p(z)}$ is the minimum of the Hilbert functions with Hilbert polynomial $p(z)$. Then, apply Theorem \ref{primo teorema}.
\end{proof}



Finally, we have a recursive procedure to compute $m_u$, for every $u\in F(p(z),\varrho)$, and we get also lower and upper bounds in terms of $\bar\varrho_{\Delta^i p(z)}$.

\begin{theorem}\label{construction-main}
Let $k$ be the degree of the polynomial $p(z)$, $u\in F(p(z),\varrho)$ and 
\[
M:=\max_{1\leq i \leq k} \{\varrho, \bar \varrho_{\Delta^i p(z)} \},\ \tilde\varrho_u :=\min\{t\geq \bar\varrho_{\Delta p(z)} : f^t_{\Delta p(z)}\preceq \Delta u\},\ \tilde m:=m_{f^{\tilde\varrho_u}_{\Delta p(z)}}.
\]
If $k=0$, then $m_u=\varrho+1$. If $k>0$, then $m_{u} = \max\{\tilde m,\varrho+1\}$. Moreover, we obtain $M+1 \leq  m_{u} \leq M+2$.
\end{theorem}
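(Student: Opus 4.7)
My plan is to split the proof into three parts: the base case $k=0$, the recursive identity $m_u=\max\{\tilde m,\varrho+1\}$ for $k>0$, and the bounds $M+1\le m_u\le M+2$, this last one by induction on $k$. For $k=0$, the polynomial $p(z)$ is constant, so every $X$ with $H_X=u$ is $0$-dimensional and hence Cohen-Macaulay; the formula $\reg(W)=\varrho_W+\dim(W)+1$ recalled in Section~2 immediately gives $\reg(X)=\varrho+1$, so $m_u=\varrho+1$. The maximum defining $M$ is over an empty set of indices, leaving only $\varrho$, so $M=\varrho$ and the bounds reduce to $\varrho+1=m_u\le\varrho+2$.

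For $k>0$, the identity is established by proving the two inequalities separately. For $\le$, set $g:=f^{\tilde\varrho_u}_{\Delta p(z)}$: by definition $g\preceq\Delta u$ and $\tilde\varrho_u\ge\bar\varrho_{\Delta p(z)}$, so Theorem~\ref{th:first derivative} guarantees $g\in F(\Delta p(z),\cdot)$ and there is a scheme $Z$ with $H_Z=g$ and $\reg(Z)=\tilde m$; Theorem~\ref{construction} (expanded lifting) applied to $u$ and $Z$ produces a scheme $X$ with $H_X=u$ and $\reg(X)=\max\{\tilde m,\varrho+1\}$. For $\ge$, take any $X$ with $H_X=u$ and let $Z$ be a general hyperplane section. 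Lemma~\ref{CMR} gives $\reg(X)\ge\varrho+1$ and $\reg(X)\ge\reg(Z)$; writing $H_Z\in F(\Delta p(z),\varrho_Z)$, the relations $f^{\varrho_Z}_{\Delta p(z)}\preceq H_Z\preceq\Delta u$ coming from Proposition~\ref{prop:minimalfunction} force $\tilde\varrho_u\le\varrho_Z$, and the chain
\[
\tilde m\le m^{\tilde\varrho_u}_{\Delta p(z)}\le m^{\varrho_Z}_{\Delta p(z)}\le m_{H_Z}\le\reg(Z)\le\reg(X),
\]
combining Corollary~\ref{cor:uso innesto}(\ref{it:uso innesto_iv}) with the monotonicity from Theorem~\ref{primo teorema}, gives $m_u\ge\max\{\tilde m,\varrho+1\}$.

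For the bounds I would argue inductively on $k$. The lower bound $m_u\ge M+1$ follows from $m_u\ge\varrho+1$ together with $m_u\ge\tilde m\ge\bar\varrho_{\Delta p(z)}+1$ (since $\tilde m$ is the regularity of a scheme with polynomial $\Delta p(z)$) and, by the inductive hypothesis applied to $\Delta p(z)$ and to the function $f^{\tilde\varrho_u}_{\Delta p(z)}$, from $\tilde m\ge\bar\varrho_{\Delta^i p(z)}+1$ for every $2\le i\le k$; taking the maximum yields $M+1$. For the upper bound $m_u\le M+2$, one uses that $\Delta u$ is the Hilbert function of the Artinian quotient $S/(I,h)$ (for $h$ a generic linear non-zero-divisor, which exists because $k>0$), hence admissible with Hilbert polynomial $\Delta p(z)$ and regularity at most $\varrho+1$; Proposition~\ref{prop:minimalfunction} then yields $\tilde\varrho_u\le\max\{\varrho+1,\bar\varrho_{\Delta p(z)}\}$. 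In the principal case this already gives $\tilde\varrho_u\le M$, the inductive step provides $\tilde m\le M+2$, and $m_u=\max\{\tilde m,\varrho+1\}\le M+2$ follows.

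I expect the main obstacle to be the borderline case $\tilde\varrho_u=\varrho+1$ with $M=\varrho$, where the induction is loose by one. Here one observes that $\tilde\varrho_u=\varrho+1$ is equivalent to $f^{\varrho}_{\Delta p(z)}\not\preceq\Delta u$; unwinding this inequality (and using Remark~\ref{varrho'}(3) to decide whether $f^{\varrho+1}_{\Delta p(z)}$ and $f^{\varrho}_{\Delta p(z)}$ coincide) either pins the regularity of $f^{\tilde\varrho_u}_{\Delta p(z)}$ to at most $\varrho=M$, so that the induction closes, or forces $u(\varrho-1)>p(\varrho-1)$, in which case Proposition~\ref{newreg} already gives $\reg(X)\ge\varrho+2=M+2$ for the lower bound while the ideal graft of Theorem~\ref{innesto} applied to $\bar f$ produces the matching upper bound $m_u\le M+2$, exactly as in the proof of Corollary~\ref{cor:finale innesto}.
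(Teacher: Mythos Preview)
Your argument for $k=0$ and for the recursive identity $m_u=\max\{\tilde m,\varrho+1\}$ is correct and matches the paper almost verbatim: you use Lemma~\ref{CMR} plus Theorem~\ref{primo teorema} for $\ge$ and the expanded lifting (Theorem~\ref{construction}) for $\le$, which is exactly what the paper does. Your lower bound $m_u\ge M+1$ is also correct and is the paper's argument, only written out in more detail.

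The divergence is in the upper bound. The paper does \emph{not} apply the inductive hypothesis directly to $f^{\tilde\varrho_u}_{\Delta p(z)}$. Instead it first observes (via Corollary~\ref{cor:finale innesto}) that $\tilde m=m^{\tilde\varrho_u}_{\Delta p(z)}\le\max\{m_{\Delta p(z)},\tilde\varrho_u+1\}$, then bounds $m_{\Delta p(z)}$ by induction on the degree (this is a genuine degree-$(k-1)$ instance, applied to $\bar f$ for $\Delta p(z)$) and $\tilde\varrho_u+1$ by the estimate $\tilde\varrho_u\le\max\{\bar\varrho_{\Delta p(z)},\varrho+1\}$. Both pieces are then $\le M+2$, and no borderline case arises.

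Your borderline analysis, by contrast, has a gap. First, the branch ``regularity of $f^{\tilde\varrho_u}_{\Delta p(z)}$ at most $\varrho$'' is in fact empty: if it held then $f^{\varrho}_{\Delta p(z)}=f^{\varrho+1}_{\Delta p(z)}\preceq\Delta u$, contradicting $\tilde\varrho_u=\varrho+1$. So you are always in the second branch. There your claim $u(\varrho-1)>p(\varrho-1)$ is actually true (if $u(\varrho-1)\le p(\varrho-1)$ one checks by admissibility that $f^{\varrho}_{\Delta p(z)}\preceq\Delta u$), but the conclusion you draw from the ideal graft is problematic: Theorem~\ref{innesto} with $q=\bar f$ and $m=\varrho+2$ gives only $m_u\le\max\{\varrho+2,\bar m\}$, so you still need $\bar m\le\varrho+2=M+2$. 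But $\bar m=m_{\bar f}$ is itself an instance of the theorem at the \emph{same} degree $k$, so your induction on $k$ alone does not furnish it. You would need either a secondary induction on $\varrho$ (handle $u=\bar f$ first), or simply adopt the paper's route through Corollary~\ref{cor:finale innesto}, which sidesteps the issue by pushing the induction down to $m_{\Delta p(z)}$ in degree $k-1$.
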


\begin{proof}
The case $k=0$ holds by Remark \ref{varrho'}(2). For $k>0$, observe that $\tilde m$ is the lowest Castelnuovo-Mumford regularity of a general hyperplane section of a scheme with Hilbert function $u$, by Lemma \ref{CMR} and Theorem \ref{primo teorema}, hence $m_u\geq \tilde m$. Now, by Theorem \ref{construction}, we can construct a scheme $X$ with $H_X=u$ and $\reg(X)=\max\{\tilde m,\varrho+1\}=m_u$.

For the inequalities $M+1 \leq  m_{u} \leq M+2$ we argue by induction on $k$. The first inequality follows again from Lemma \ref{CMR} and from the definition of $\bar \varrho_{\Delta^i p(z)}$. For the second one, observe that if $m_u\geq\varrho+2$, then $m_u=\tilde m$, by the previous results. Moreover, by Corollary \ref{cor:uso innesto}\emph{(\ref{it:uso innesto_iv})}, $\tilde m=m^{\tilde \varrho_u}_{\Delta p(z)}$ because $f_{\Delta p(z)}^{\tilde\varrho_u}$ has regularity $\tilde\varrho_u$. By induction, we obtain $m_{\Delta p(z)} \leq \max_{1\leq i \leq k} \{\bar \varrho_{\Delta^i p(z)}\}+2$, because, by Theorem \ref{primo teorema}, $m_{\Delta p(z)}$ is the minimum regularity of a scheme with Hilbert function $f_{\Delta p(z)}^{\bar\varrho_{\Delta p(z)}}$, that has regularity $\bar\varrho_{\Delta p(z)}$. Moreover, by Theorem \ref{primo teorema} and Corollary \ref{cor:finale innesto}, we have $\tilde m \leq \max\{m_{\Delta p(z)},\tilde\varrho_u+1\}$. Thus, we complete the proof because $\tilde \varrho_u \leq \max\{\bar\varrho_{\Delta p(z)},\varrho+1\}$, by definition.
\end{proof}

\appendix\section{Algorithms and examples}

In this appendix, we collect the main algorithms that arise from the results we have described in our exposition. A trial version of these algorithms is available at the web page \href{http://www.personalweb.unito.it/paolo.lella/HSC/Minimal_Hilbert_Functions_and_CM_regularity.html}{\texttt{http://www.personalweb.unito.it/paolo.lella/HSC/Minimal\_Hilbert\_Functions\_\ }} \href{http://www.personalweb.unito.it/paolo.lella/HSC/Minimal_Hilbert_Functions_and_CM_regularity.html}{\texttt{and\_CM\_regularity.html}}.

\begin{algorithm}[H]
\caption{\label{alg:barvarrho} \textsc{RhoBar} computes the minimal regularity of a Hilbert function of a scheme with Hilbert polynomial $p(z)$ (see Proposition \ref{prop:calcolo bar varrho}).}
\begin{algorithmic}[1]
\STATE $\textsc{RhoBar}\big(p(z)\big)$
\REQUIRE $p(z)$ a Hilbert polynomial.
\ENSURE $\bar{\varrho}_{p(z)}$, the minimal regularity of a Hilbert function $f$ of a scheme with Hilbert polynomial $p(z)$. 
\IF{$\deg p(z) = 0$}
\RETURN $1$;
\ELSE 
\STATE $\varrho_{\Delta p(z)} \leftarrow \textsc{RhoMin}\big(\Delta p(z)\big)$;
\RETURN $\min\left\{t \geqslant \max\{\varrho_{\Delta p(z)},1\}\ \big\vert\ \Sigma f^{t}_{\Delta p(z)} (t-1) \leqslant p(t-1)\right\} - 1$;
\ENDIF
\end{algorithmic}
\end{algorithm}

\begin{algorithm}[H]
\caption{\label{alg:varrho} \textsc{RhoMin} computes the minimal regularity of a Hilbert function with Hilbert polynomial $p(z)$ (see Remark \ref{rk:calcolo varrho}).}
\begin{algorithmic}[1]
\STATE \textsc{RhoMin}($p(z)$)
\REQUIRE $p(z)$ a Hilbert polynomial.
\ENSURE $\varrho_{p(z)}$, the minimal regularity of a Hilbert function $f$ with $f(t) = p(t),\ t \gg 0$. 
\IF{$\deg p(z) = 0$}
\RETURN $1$;
\ELSE 
\STATE $\varrho_{\Delta p(z)} \leftarrow \textsc{RhoMin}\big(\Delta p(z)\big)$;
\STATE $\varrho \leftarrow \min\left\{t \geqslant \max\{\varrho_{\Delta p(z)},1\}\ \big\vert\ \Sigma f^{t}_{\Delta p(z)} (t-1) \leqslant p(t-1)\right\} - 1$;
\IF{$\varrho > \varrho_{\Delta p(z)}-1$}
\RETURN $\varrho$;
\ELSE
\WHILE{$\varrho>1$ and $(p(\varrho-1)_{\varrho-1})^{+}_{+} \geqslant p(\varrho)$}
\STATE $\varrho \leftarrow \varrho-1$;
\ENDWHILE
\IF {$\varrho=1$ and $p(0)=1$}
\STATE $\varrho \leftarrow 0$;
\ENDIF
\RETURN $\varrho$;
\ENDIF
\ENDIF
\end{algorithmic}
\end{algorithm}

\begin{algorithm}[H]
\caption{This function computes the minimal Castelnuovo-Mumford regularity of a scheme, given the Hilbert polynomial and the regularity of the Hilbert function (see Theorem \ref{construction-main}).}
\begin{algorithmic}[1]
\STATE \textsc{MinimalCMregularity}$\big(p(z),\varrho\big)$
\REQUIRE $p(z)$, a Hilbert polynomial.
\REQUIRE $\varrho$, an integer such that $F(p(z),\varrho)\not=\emptyset$.
\ENSURE the minimal Castelnuovo-Mumford regularity $m^{\varrho}_{p(z)}$ of a scheme with Hilbert function of regularity $\varrho$ and Hilbert polynomial $p(z)$. 
\IF{$\deg p(z) = 0$}
\RETURN $\varrho+1$;
\ELSE
\STATE $\bar{\varrho}_{\Delta p(z)} \leftarrow \textsc{RhoBar}\big(\Delta p(z)\big)$;
\STATE $b \leftarrow \min\{ t \geqslant \bar{\varrho}_{\Delta p(z)}\ \vert\ f^t_{\Delta p(z)} \preceq \Delta f^{\varrho}_{p(z)}\}$;
\RETURN $\max\left\{\varrho+1,\textsc{MinimalCMregularity}\big(\Delta p(z),b\big)\right\}$;
\ENDIF
\end{algorithmic}
\end{algorithm}

\begin{algorithm}[H]
\caption{This function computes the minimal Castelnuovo-Mumford regularity of a scheme, given the Hilbert polynomial (see Theorem \ref{construction-main}).}
\begin{algorithmic}[1]
\STATE \textsc{MinimalCMregularity}$\big(p(z)\big)$
\REQUIRE $p(z)$, a Hilbert polynomial.
\ENSURE the minimal Castelnuovo-Mumford regularity $m_{p(z)}$ of a scheme with Hilbert polynomial $p(z)$. 
\RETURN $\textsc{MinimalCMregularity}\Big(p(z),\textsc{RhoBar}\big(p(z)\big)\Big)$;
\end{algorithmic}
\end{algorithm}

\begin{algorithm}[H]
\caption{This function computes the minimal Castelnuovo-Mumford regularity $m_u$ of a scheme, given the Hilbert function $u$ with Hilbert polynomial $p(z)$ (see Theorem \ref{construction-main}).} \label{alg:u}
\begin{algorithmic}[1]
\STATE \textsc{MinimalCMregularity}$\big(p(z),u\big)$
\REQUIRE $p(z)$, a Hilbert polynomial.
\REQUIRE $u$, Hilbert function of a scheme with Hilbert polynomial $p(z)$.
\ENSURE the minimal Castelnuovo-Mumford regularity $m_u$ of a scheme with Hilbert function $u$. 
\STATE $\varrho \leftarrow$ regularity of $u$;
\STATE $b \leftarrow \min \left\{ t \geqslant \bar{\varrho}_{p(z)}\ \vert\ f^{t}_{\Delta p(z)} \preceq \Delta u \right\}$
\RETURN $\max\left\{\varrho+1,\textsc{MinimalCMregularity}\big(\Delta p(z),b\big)\right\}$;
\end{algorithmic}
\end{algorithm}

\begin{algorithm}[H]
\caption{This function computes the minimal Castelnuovo-Mumford regularity of a subscheme of $\PP^n$ with Hilbert polynomial $p(z)$ (see Corollary \ref{domanda di Paolo}).}
\begin{algorithmic}[1]
\STATE \textsc{MinimalCMregularity}$\big(p(z),\PP^n\big)$
\REQUIRE $p(z)$, a Hilbert polynomial.
\REQUIRE $\PP^n$, a projective space with $n \geqslant \deg p(z) + 1$.
\ENSURE the minimal Castelnuovo-Mumford regularity of a subscheme of $\PP^n$ with Hilbert polynomial $p(z)$. 
\STATE $\bar{\varrho}_{p(z)} \leftarrow \textsc{RhoBar}\big(p(z)\big)$;
\STATE $\varrho \leftarrow \min \left\{ t \geqslant \bar{\varrho}_{p(z)}\ \vert\ f^{t}_{p(z)}(1) \leqslant n+1\right\}$
\RETURN $\textsc{MinimalCMregularity}\big(p(z),\varrho\big)$;
\end{algorithmic}
\end{algorithm}

\begin{example}\label{esempio Paolo}
We compute $m_{p(z)}$ for the Hilbert polynomial $p(z)=\frac{1}{3}z^3+2z^2+\frac{14}{3}z-4$, applying \textsc{MinimalCMregularity}$\big(p(z)\big)$. We get $\Delta p(z)=z^2+3z+3$, $\Delta^2 p(z)=2z+2$ and $\Delta^3 p(z)=2$ (see Table \ref{tab:1}). Moreover, we have $\varrho_{p(z)}=3$ and $\varrho_{\Delta p(z)}=1$, hence $\bar\varrho_{p(z)}=\varrho_{p(z)}=3$. For $u:=f^3_{p(z)}=(1,6,17,p(z))$, we compute $\Delta f^3_{p(z)}=(1,5,11,20,\Delta p(z))$ and have $\tilde\varrho_u=4$. For $u:=f^4_{\Delta p(z)}=(1,4,10,19,\Delta p(z))$, we compute $\Delta f^4_{\Delta p(z)}=(1,3,6,9,12,12,$ $\Delta^2 p(z))$ and get $\tilde\varrho_u=2$, with $f^2_{\Delta^2 p(z)}=(1,3,\Delta^2 p(z))$. Since $m_{f^2_{\Delta^2 p(z)}}=3$, we get $m_{f^4_{\Delta p(z)}}=5$, hence $m_{p(z)}=5$.
\begin{table}
\caption{Example \ref{esempio Paolo}}
\label{tab:1}
\begin{tabular}{l|c|c|c|c|c|c}
\hline 
\noalign{\smallskip}
$p(z)$ & $r$ & $\varrho_{p(z)}$ & $\bar\varrho_{p(z)}$ &$\varrho$ & $\tilde\varrho$ & $m^{\varrho}_{p(z)}$\\
\noalign{\smallskip}\hline\noalign{\smallskip}
 $\frac{1}{3}z^3+2z^2+\frac{14}{3}z-4$ & $10$ & $3$ & $3$ & $3$ & $4$ & $5$\\
 $z^2+3z+3$  & $6$ & $1$ & $1$ & $4$ & $2$ & $5$ \\
 $2z+2$  & $3$ & $1$ & $1$ & $2$ & $1$ & $3$\\
 $2$  & $2$ & $1$ & $1$ & $1$ & &  $2$\\
\noalign{\smallskip}
\hline
\end{tabular}
\end{table}
\end{example}

\begin{example} \label{example m=7}
For the admissible polynomial $p(z)=6z^2-18z+37$ we already considered in Example \ref{ex:first derivative}, we get $\Delta p(z)=12z-24$, $\varrho_{p(z)}=1$ and $\varrho_{\Delta p(z)}=5$, hence $\bar\varrho_{p(z)}=4$ (see Table \ref{tab:2}). For $u:=f^4_{p(z)}=(1,5,15,33,p(z))$, we compute   $\Delta f^4_{p(z)}=(1,4,10,18,28,\Delta p(z))$ and have $\tilde\varrho_u=5$. Then, for  $u:=f^5_{\Delta p(z)}=(1,4,9,16,25,\Delta p(z))$ we compute $\Delta f^5_{\Delta p(z)}=(1,3,5,7,9,11,$ $\Delta^2 p(z))$ and get $\tilde\varrho_u=6$, hence $m_{p(z)}=7$.

For the admissible polynomial $q(z)=2z^3-6z^2+29z-20$, we have $\Delta q(z)=p(z)$ and $\varrho_{q(z)}=2$. So, we take $f^2_{q(z)}=(1,8,q(z))$ and $\Delta f^2_{q(z)}=(1,7,22,\Delta q(z))$. By $\bar\varrho_{\Delta q(z)}=4$, we get $\tilde\varrho_u=4$ for $u:=f^4_{p(z)}=f^4_{\Delta q(z)}\preceq f^3_{\Delta q(z)}\preceq \Delta f^2_{q(z)}$. Using the already computed $m_{f^{4}_{p(z)}}=m_{p(z)}$, we have also $m_{q(z)}=7$.
\begin{table}
\caption{Example \ref{example m=7}}
\label{tab:2}
\begin{tabular}{l|c|c|c|c|c|c}
\hline 
\noalign{\smallskip}
$p(z)$ & $r$ & $\varrho_{p(z)}$ & $\bar\varrho_{p(z)}$ &$\varrho$ & $\tilde\varrho$ & $m^{\varrho}_{p(z)}$\\
\noalign{\smallskip}\hline\noalign{\smallskip}
 $2z^3-6z^2+29z-20$ & $218498$ & $2$ & $2$ &$2$ & $4$ & $7$\\
 $6z^2-18z+37$ & $678$ & $1$ & $4$ & $4$ & $5$  & $7$ \\
 $12z-24$  & $42$ & $5$ & $5$ & $5$ & $6$ & $7$ \\
 $12$  & $12$ & $1$ & $1$ & $6$ & & $7$ \\
\noalign{\smallskip}
\hline
\end{tabular}
\end{table}
\end{example}



\end{document}